\documentclass[dvipsnames]{amsart}
\usepackage{pict2e}
\usepackage{amsmath,amssymb,amsthm}
\usepackage{fullpage}
\usepackage{todonotes}

\allowdisplaybreaks
\usepackage{soul}

\usepackage{color}
\usepackage{hyperref}
\hypersetup{
	colorlinks=true,
	linkcolor=blue,
	citecolor=blue,
	urlcolor=blue
}

\usepackage{xcolor}

\usepackage{tikz}

\newtheorem{theorem}{Theorem}[section]
\newtheorem{lemma}[theorem]{Lemma}	
\newtheorem{proposition}[theorem]{Proposition}

\theoremstyle{definition}
\newtheorem{definition}[theorem]{Definition} 
\newtheorem{remark}[theorem]{Remark}

\newcommand{\longdownarrow}{\lower 1.4ex\hbox{\begin{picture}(18,18)(0,0)
\thicklines
\put(0,18){\vector(0,-1){18}}
\end{picture}}}
\newcommand{\longsearrow}{\lower 1.4ex\hbox{\begin{picture}(18,18)(0,0)
\thicklines
\put(0,18){\vector(1,-1){18}}
\end{picture}}}
\newcommand{\longssearrow}{\lower 1.4ex\hbox{\begin{picture}(18,18)(0,0)
\thicklines
\put(0,18){\vector(1,-2){18}}
\end{picture}}}
\newcommand{\longeearrow}{\lower 1.4ex\hbox{\begin{picture}(0,0)(9,9)
\thicklines
\put(0,18){\vector(1,0){18}}
\end{picture}}}
 \usepackage{relsize}
\theoremstyle{definition} 
\newtheorem*{ack}{Acknowledgements}

\numberwithin{equation}{section}
\newcommand{\C}{\mathcal{C}}

\newcommand{\N}{\mathbb{N}}
\newcommand{\Z}{\mathbb{Z}}
\title[Spectral complexes from truncated multicomplexes]{Spectral complexes from truncated multicomplexes} 
\keywords{spectral sequences, truncated multicomplexes,  Rumin complex}
 
\subjclass{18G40, 46L87, 22E25
}
\author[F.~Tripaldi]{Francesca Tripaldi}

\address[F. Tripaldi]%
{Department of Pure Mathematics, University of Leeds, Woodhouse, LS2 9JT Leeds, UK} 
\email{f.tripaldi@leeds.ac.uk}

\date{}

\begin{document}
\maketitle
\begin{abstract}
  This paper introduces a new construction of subcomplexes associated with a truncated multicomplex. Inspired by the machinery of spectral sequences, this construction yields a collection of interrelated subcomplexes whose differentials coincide with the spectral sequence differentials. These complexes refine the Rumin complex and retain the cohomology of the underlying multicomplex, providing a new tool for the study of subRiemannian geometry, particularly on Carnot groups.
\end{abstract}

\section{Introduction}
Extracting meaningful subcomplexes from cochain complexes is a central and very active theme in differential geometry. The underlying goal is to capture the extra algebraic, geometric or analytic structures that may be present in a given setting. Such subcomplexes are typically designed to be more closely adapted to the problem at hand, allowing one to extract sharper topological or metric invariants, or to construct differential operators with stronger analytic properties.

In the case of a complex manifold $X$, the presence of a complex structure endows the de Rham complex $(\Omega^\bullet(X),d)$ with additional algebraic properties. This is due to the fact that the complex structure induces a splitting of the tangent bundle into holomorphic and anti-holomorphic components, and consequently provides a decomposition of the exterior derivative such that $d=\partial+\overline{\partial}$ satisfying $0=d^2=\partial^2=\overline{\partial}^2=\partial\overline{\partial}+\overline{\partial}\partial$. This endows the de Rham complex with the structure of a bicomplex. In particular, since $\overline{\partial}^2=0$, one may consider the Dolbeault complex $(\Omega^\bullet(X),\overline{\partial})$, whose cohomology provides a fundamental analytic invariant of complex 
manifolds, relating holomorphic complexity with topology using the Fr\"olicher 
spectral sequence. Indeed, the complex $(\Omega^\bullet(X),\overline{\partial})$ plays a crucial role in the study and classification of complex structures \cite{huybrechts2005complex}.

When the complex manifold $X$ carries additional geometric structures, further information can be extracted using other subcomplexes and their associated cohomology groups. In the case of a K\"ahler manifold $X$, the complex, symplectic, and Riemannian structures are mutually compatible. This compatibility allows us to detect information about the underlying manifold via the Bott-Chern and Aeppli cohomology groups
\begin{align*}
    H_A^{\bullet,\bullet}:=\frac{\ker\partial\overline{\partial}}{\mathrm{Im}\left(\partial+\overline{{\partial}}\right)}\ \text{ and }\ H_{BC}^{\bullet,\bullet}:=\frac{\ker \left(\partial+\overline{\partial}\right)}{\mathrm{Im}\,\partial\overline{\partial}}\,,
\end{align*}
which are well-defined since $d\circ \partial\overline{\partial}=\partial\overline{\partial}\circ d=0$.
The study of these cohomology groups later inspired the construction of a new subcomplex, known in the literature as either the Bigolin \cite{piovani2025cohomologybigolincomplex}, Schweitzer \cite{StelzigPrimo}, or Aeppli-Bott-Chern complex \cite{Piovani2}, which turns out to be elliptic \cite{Stelzig} and carries Hodge theoretic properties as well as cohomological invariants \cite{piovani2025cohomologybigolincomplex}.

In the case of symplectic manifolds which may not be K\"ahler in general, a symplectic analogue of the complex Bott-Chern and Aeppli cohomology
 groups have also been introduced in \cite{TsengYau}, carrying analogous properties to the K\"ahler ones \cite{TardiniTomassini}. Finally, both the Dolbeault complex \cite{Ciricialmost} and Bott–Chern and Aeppli cohomology \cite{SillariTomassini} have been successfully extended to the broader context of almost complex manifolds.

Another fundamental approach to the construction of subcomplexes is provided by Bernstein-Gelfand-Gelfand (BGG) sequences, which play a central role in the study of parabolic geometries \cite{vcap2024parabolic}. Originally introduced in the representation theory of semisimple Lie algebras 
$\mathfrak{g}$ \cite{bernstein1975differential}, BGG resolutions were later extended to the setting of parabolic subalgebras $\mathfrak{p}\subset\mathfrak{g}$ \cite{lepowsky1977generalization}. These algebraic constructions were soon found to have deep geometric significance. In the specific setting of parabolic subalgebras \cite{lepowsky1977generalization}, the homomorphisms appearing in the BGG resolutions correspond to invariant differential operators acting on sections of homogeneous vector bundles over the generalised flag manifold $G/P$ induced by irreducible representations of an appropriate parabolic subgroup $P\subset G$. For suitable choices of $G$ and $P$, the corresponding $G/P$ coincides with the homogeneous model for some fundamental geometric structures, such as CR and conformal geometry \cite{eastwood1987conformally}. These first works motivated the systematic study of geometries with a homogeneous model, now commonly referred to as parabolic geometries. 
In the early 2000s, a formulation of BGG sequences rooted in differential geometry was developed in \cite{vcap2001bernstein}. In this framework, one constructs sequences of differential operators that are intrinsic to the associated Cartan geometry. On geometries that are locally isomorphic to the homogeneous model $G/P$, the induced BGG sequence is a complex with possibly higher order differential operators that computes the same cohomology as the (twisted) de Rham cohomology.
 We refer to the introduction in \cite{cap2025bgg} for a thorough exploration of the construction and geometric motivation behind BGG sequences.

 Even though the BGG machinery heavily relies on the associated canonical Cartan geometry via tractor bundles and connections, making the original construction not easily accessible, there are specific settings where this can be greatly simplified: on filtered manifolds via osculating groups \cite{dave2022graded} and on smooth Riemannian manifolds \cite{fischer2023subcomplexes,cap2025bgg}.
 
 Crucially, the case of filtered manifolds also covers the setting of equiregular subRiemannian manifolds, where the most used and known subcomplex has been the Rumin complex $(E_0^\bullet,d_c)$. This subcomplex was first introduced by M. Rumin on contact 
manifolds in the early 1990s \cite{Rumin1990,Rumin1994}  and later extended to equiregular filtered manifolds \cite{rumin_grenoble,rumin_palermo}.  
 It is worth noting that, starting from a different perspective, an equivalent construction of $(E_0^\bullet,d_c)$ had already been developed independently. Motivated by
the study of Monge-Amp\`ere equations on contact manifolds, Lychagin constructed an equivalent subcomplex \cite{lychagin1979contact} in 1979 (and later expanded in \cite{lychagin1994non}), intended as a contact variant of the
effective cohomology groups of symplectic manifolds \cite{bouche}. After remaining an open question since the early 2000s, it was recently shown that BGG sequences and the Rumin complex coincide in the setting of homogeneous groups \cite{F+T1} and, more generally, on equiregular filtered manifolds \cite{fischer2023subcomplexes}. 

With the aim of extending both geometric and analytic constructions from the Riemannian to the subRiemannian setting, the Rumin complex has become an essential replacement for the de Rham complex. 
In the case of contact manifolds, the subcomplex $(E_0^\bullet,d_c)$ has been successfully used to construct (maximally) hypoelliptic Hodge-Laplacians on forms \cite{Rumin1994} with applications to computing analytic torsion \cite{Rumin+Seshadri2012,kitaoka2020analytic,kitaoka2022harmonic,rumin2024topological}, sharp quasi-isometry invariants \cite{rumin1999differential,BFP1,pansu2019averages,Pansu+Rumin2018,BFP3,BFP4,baldi2024continuous}, and form-valued partial differential equations \cite{GagliardoNiremb,FranchiMontefalconeSerra,baldi2022sobolev,BaldiRosa}. More recently, further applications have emerged in geometric measure theory, particularly through connections with intrinsic Lipschitz graphs \cite{FranchiSerapioni} and the validity of Stokes’ theorem for $(E_0^\bullet,d_c)$ \cite{franchi2006div,StokesHeis,dimarco2025submanifoldsboundarysubriemannianheisenberg}, laying the groundwork for a theory of subRiemannian currents on contact manifolds \cite{julia2023flat,canarecci2021sub,canarecci2023horizontal,franchi2025currentsheisenberggroups}. Nevertheless, even though the Rumin complex can be defined on arbitrary (equiregular) subRiemannian manifolds, these results do not hold outside of contact and $(2,3,5)$-manifolds (we refer to \cite{haller2024regularizeddeterminantsrumincomplex,haller2025etainvariant235distributions,Haller+2025,baldi2025comparing} for some recent analytic results on $(2,3,5)$-manifolds that are locally diffeomorphic to the Cartan group).

\smallskip
 This overview is by no means intended to provide an exhaustive account of all subcomplexes that have been constructed in the literature. Rather, it highlights some of the tools developed in three fundamental geometric settings (complex, parabolic, and subRiemannian geometry) which have proved particularly successful in applications. Many other constructions and extensions exist in additional contexts, including CR manifolds \cite{CASE2016109,case2018boundary,case2023simple,case2025bigraded}, singular filtered manifolds \cite{calin2012integrability}, and finite element exterior calculus \cite{ComplexesFromComplexes,vcap2023bounded,hu2023distributional,rssurvey2025}. The purpose of this brief survey is instead to illustrate a common guiding principle in the study of differential complexes: the identification of a subcomplex that is optimally adapted to a specific geometric or analytic problem. 
 In the present work, we adopt a different perspective. Rather than selecting a single preferred subcomplex, we propose a framework that consists of a \textit{collection} of several natural subcomplexes arising from the underlying geometric structure.
 
 The main source of inspiration for this approach is the Rumin complex in subRiemannian geometry. More specifically, the construction of the ``spectral complexes'' introduced in this paper is motivated by the issues arising when dealing with $(E_0^\bullet,d_c)$ on arbitrary Carnot groups. These difficulties arise precisely when the Rumin differential $d_c$ decomposes as a  sum of differential operators of distinct homogeneous (Heisenberg) orders. This property can be expressed in an equivalent way by saying that the space of Rumin forms splits into multiple (homogeneous) weights in at least one degree \cite{rumin_palermo,BaldiFranchi+2015+333+362,FT}. This situation is by no means the exception when dealing with Carnot groups, but rather the rule (it is reasonable to assume that up to dimension 9 the only groups where Rumin forms appear in only one weight in each degree are Heisenberg groups and the 5-dimensional Cartan group \cite{hakavuori2022gradings,conti2024ricci}). On the other hand, in the special cases where no such splitting occurs and the Rumin differential has a single homogeneous differential order $j$ at a given degree (so that $d_c=d_c^j$), the Rumin differential coincides with the corresponding differential $\Delta_j$ arising from the spectral sequence associated with the filtration by weights \cite{lerario2023multicomplexes}. These observations highlight a fundamental tension. On the one hand, there is a need for a ``thin'' subcomplex, preferably involving differential operators of homogeneous order, which is well suited for analytic purposes. On the other hand, it is essential to ``keep track'' of all Rumin forms because the cohomology of the associated spectral sequence is distributed across all weights. The spectral complexes constructed in this paper are designed to reconcile these competing requirements.
 \smallskip

Inspired by the machinery of spectral sequences, rather than engineering a single optimal subcomplex, we choose to consider a collection of them. Unlike the classical setting of spectral sequences, where at each page we have a family of subcomplexes that run parallel to each other, these new \textit{spectral complexes} do not. Instead, they may intersect and overlap in nontrivial ways. This phenomenon already appears in simple examples. For instance, as shown in Subsection \ref{engel group}, in the case of the Engel group the construction yields two spectral complexes of the form
\begin{align*}
    E_0^0\xrightarrow[]{d_c^1}&\ \quad \ \ E_0^1\ \ \ \ \xrightarrow[]{d_c^2}E_0^2\cap\C_{3,-1}\xrightarrow[]{d_c^3}E_0^3\cap\left(\mathrm{Im}\,d_c^2\right)^\perp\xrightarrow[]{d_c^1} E_0^4\\
    E_0^0\xrightarrow[]{d_c^1}&E_0^1\cap\ker d_c^2\xrightarrow[]{d_c^3} E_0^2\cap\C_{4,-2}\xrightarrow[]{d_c^2} \qquad\ E_0^3\qquad\ \xrightarrow[]{d_c^1} E_0^4
\end{align*}
Clearly, in degrees 0 and 4, the spaces of forms are the same in both subcomplexes, and in degrees 1 and 3 there is an overlap, since $E_0^1\cap\ker d_c^2\subset E_0^1$ and $E_0^3\cap\left(\mathrm{Im}\,d_c^2\right)^\perp\subset E_0^3$. Only in degree 2 do the two subcomplexes fail to intersect. We further stress that these two subcomplexes are equally important and should be considered together: it is the collection of them that carries the information we are looking for.

 Since the construction only relies on the fact that the multicomplex is truncated (and that we have a scalar product for which the Laplacian $\Box_0$ associated with $d_0$ admits a Hodge decomposition, see Remark \ref{d0 closed range}), the subcomplexes will be presented purely in terms of multicomplexes. Consequently, even though the motivating problems arise primarily in the study of Carnot groups, the construction applies more broadly, including to homogeneous groups and equiregular filtered manifolds.

 \smallskip

The paper is organised as follows.
In Section \ref{section Rumin}, after recalling the main definitions and properties of a truncated multicomplex $\C$, we introduce the Rumin complex $(E_0^\bullet,d_c)$ associated with it. Interestingly, from considerations in homotopy theory, we have that every multicomplex $\C$ can be decomposed into a direct sum $K\oplus H$ where $K$ is trivial and $H$ is minimal \cite{dotsenko2015rham}. Under a choice of scalar product satisfying the assumptions of Remark \ref{d0 closed range}, the Rumin complex coincides with this minimal multicomplex $H$. In Section \ref{section 3}, we are able to express the quotients $E_r^{\bullet,\bullet}$ and the differentials $\Delta_r$ of the spectral sequence associated with $\C$ in terms of Rumin forms and their differentials. This step relies on the work of \cite{livernet2020spectral} where the quotients $E_r^{\bullet,\bullet}$ are expressed in terms of certain graded modules $Z_r^{\bullet,\bullet}$ and $B_r^{\bullet,\bullet}$. In Section \ref{section 4}, we introduce the spectral complexes and establish their main properties within this framework. 

These subcomplexes resolve the issues that originally motivated this work: they capture the cohomology of the multicomplex, for every nontrivial space of Rumin forms there exists at least one subcomplex passing through it, the differentials coincide with the spectral sequence differentials $\Delta_r$
 and are therefore independent of the choice of scalar product, and each subcomplex is “thinner”, in the sense that it involves fewer differential orders. As explained in Subsection \ref{considerations}, when Rumin forms occur in more than three weights in some degree, the differentials of the associated spectral complexes may not have homogeneous differential operators.  Understanding the analytic consequences of this phenomenon, and whether further refinements of the construction are required, remains an interesting direction for future research.
\begin{ack}
    This work benefited from numerous conversations over several years. The author is especially grateful to Atabey Kaygun, Filippa Lo Biundo, and Sarah Whitehouse, for their time, interest, and insightful discussions that helped shape this paper.
\end{ack}

\section{The Rumin complex associated with a truncated multicomplex}\label{section Rumin}

\begin{definition}[Definition 2.1 in \cite{livernet2020spectral}]\label{def multicomplex} Let  $k$ be a commutative unital ground ring. An $s$-multicomplex (also known as twisted chain complex) is a $(\Z,\Z)$-graded $k$-module $\C$ equipped with maps $d_i\colon\C\to\C$ for $i\ge 0$ of bidegree $\vert d_i\vert=(i,1-i)$ such that
\begin{align}\label{diff relations}
    \sum_{i+j=n}d_id_j=0\ \text{ for all }n\ge 0\ \text{ and }d_k=0\ \text{ for all }k\ge s\,.
\end{align}

For $\C$ a multicomplex and $(a,b)\in\Z\times\Z$, we write $\C_{a,b}$ for the $k$-module in bidegree $(a,b)$. Inspired by the terminology used to study the de Rham complex on Carnot groups, given an element $x\in\C_{a,b}$, we will refer to $a$ as the \textit{weight} of $x$ and to $a+b$ as the \textit{degree} of $x$. As a shorthand notation, we will write
\begin{align*}
  \text{if }\,  x\in\C_{a,b}\ \text{ then }\ w(x)=a\ \text{ and }\ deg(x)=a+b\,.
\end{align*}
\end{definition}

In order for the constructions presented throughout the paper to make sense, we require $\C$ to be a truncated multicomplex, i.e. an $s$-multicomplex with $s<\infty$, and we are not able to cover the case of $s=\infty$.

\begin{remark}\label{remark 1.2}
    In Definition \ref{def multicomplex}, we chose a particular sign and degree convention. 
    Since the motivational example case behind this work is the de Rham complex, the structure maps $d_i$ are taken with a co-homological convention. What is usually found in the literature is instead first quadrant multicomplexes with differentials $d_i : \C \to \C$ of bidegree $\vert d_i\vert=(-i,i-1)$, more commonly studied in homology theory. Moreover, another common alternative is to require the structure maps to satisfy the relations 
    \begin{align*}
        \sum_{i+j=n}(-1)^id_id_j=0\ \text{ for all }n\ge 0
    \end{align*}
    instead of those in \eqref{diff relations}. 
    
    Finally, in the context of the de Rham complex on a Carnot group $G$, weights of differential forms are always non-negative. This means that the space of smooth forms $\Omega^\bullet(G)$ is actually an $(\N_0,\Z)$-graded $C^\infty(G)$-module. Not only this, but since the group $G$ is finite-dimensional, the range for the weights $a$ is finite and takes integer values between 0 and the weight $Q$ of the volume form. 
\end{remark}

\begin{definition}[Definition 2.2 in \cite{livernet2020spectral}]\label{total complex}
    For an $s$-multicomplex $\C$, its \textit{associated total complex} is defined as $\left(\mathrm{Tot}\,\C,d\right)$ with
    \begin{align}\label{ass tot}
        \left(\mathrm{Tot}\,\C\right)_h=\left(\prod_{\substack{a+b=h\\a\le0}}\C_{a,b}\right)\oplus\left(\bigoplus_{\substack{a+b=h\\ a>0}}\C_{a,b}\right)=\left(\bigoplus_{\substack{a+b=h\\b\le 0}}\C_{a,b}\right)\oplus\left(\prod_{\substack{a+b=h\\b>0}}\C_{a,b}\right)
    \end{align}
    and differential $d$ on $\mathrm{Tot}\,\C$ given for an arbitrary element $x\in\left(\mathrm{Tot}\,\C\right)$ by
    \begin{align}\label{diff on ass tot}
        \left(dx\right)_a=\sum_{i=0}^sd_i\left(x\right)_{a-i}\,.
    \end{align}

    Borrowing the notation used in the context of Carnot groups, we can also write the differential on $\mathrm{Tot}\,\C$ as $d=d_0+d_1+\cdots+d_s$, which turns out to be a complex by \eqref{diff relations} since
    \begin{align*}
        d^2=(d_0+d_1+\cdots+d_s)(d_0+d_1+\cdots+d_s)=\sum_{n=0}^{2s}\left(\sum_{i+j=n}d_{i}d_j\right)=0\,.
    \end{align*}
\end{definition}
In general, when working with $\left(\mathrm{Tot}\,\C\right)_h$, it is not always possible to consider the direct product total complex $\prod_{a+b=h}\C_{a,b}$ in degree $h$, as the formula \eqref{ass tot} may involve infinite sums. Notice, however, that in the case of the de Rham complex on Carnot groups this would not be an issue (see Remark \ref{remark 1.2}). Therefore, in this particular setting the associated total complex takes the simpler form
\begin{align*}
    \left(\mathrm{Tot}\,\C\right)_h=\bigoplus_{\substack{a+b=h\\0\le a\le Q}}\C_{a,b}\ \text{ with differential }d=d_0+d_1+\cdots+d_s\,.
\end{align*}

We are now going to follow the construction introduced by Rumin in \cite{rumin_grenoble,rumin_palermo} to extract the subcomplex $(E_0^\bullet,d_c)$, most commonly known as the Rumin complex, from $(\mathrm{Tot}\,\C,d=d_0+d_1+\cdots+d_s)$, the total complex associated with our $s$-multicomplex $\C$.

\subsection{Introducing a scalar product on $\C$}\label{scalar product}The space of Rumin forms $E_0^\bullet$ is isomorphic to the cohomology of the complex $(\mathrm{Tot}\,\C,d_0)$. However, since we want to define the $E_0^\bullet$ as subspaces of $\C$ instead of quotients, we require a way of identifying complements of the subspace $\mathrm{Im}\,d_0$. A natural solution is to introduce a scalar product on $\C$ and then take the orthogonal complement of $\mathrm{Im}\,d_0$.

In our setting, it is sufficient to define a scalar product $\langle\cdot,\cdot\rangle_h$ on each $(\mathrm{Tot}\,\C)_h$, such that elements of different weight are orthogonal, i.e. for each $h$ we have 
\begin{align*}
    \text{given } x_1\in\C_{a_1,b_1} \text{ and } x_2\in\C_{a_2,b_2} \text{ with }a_1+b_1=a_2+b_2=h\ ,\ \text{ if }a_1\neq a_2\text{ then }\langle x_1,x_2\rangle_h=0\,.
\end{align*}
Given a subspace $S$ of $\mathrm{Tot}\,\C$, we will denote the orthogonal projection onto $S$ by $\mathrm{pr}_S$.
\begin{definition}[The adjoint of $d_0$]\label{adjoint of d0} We can use the scalar product just introduced to define the formal transpose (adjoint) of $d_0$, which we will denote by $\delta_0$. In other words, $\delta_0\colon \C_{a,b}\to\C_{a,b-1}$ is defined by imposing
\begin{align*}
    \langle d_0 x,y\rangle_{{h+1}}=\langle x,\delta_0 y\rangle_{h}\ \text{ for any }x\in\left(\mathrm{Tot\,}\C\right)_h\text{ and }y\in\left(\mathrm{Tot\,}\C\right)_{h+1}\,. 
\end{align*}
The fact that $w(\delta_0x)=w(x)$ is a direct consequence of the fact that elements of different weight are orthogonal (see Lemma \ref{lemma 3.8} for the complete proof).    
\end{definition}

\begin{remark}\label{d0 closed range}
    Throughout this paper, we will be assuming that the map $d_0\colon\C_{a,b}\to\C_{a,b+1}$ has closed range. In the case of the de Rham complex on a Carnot group, the map $d_0$ is an extension of the Chevalley-Eilenberg differential to the space of smooth forms, so in particular it is a linear map between finite dimensional spaces (we refer to \cite{F+T1} for a thorough explanation of this fact). This assumption will be crucial when considering the properties of the Laplacian associated with $d_0$.
\end{remark}

\begin{definition}[Rumin forms on $(\mathrm{Tot}\,\C,d)$] Given the associated total complex $\mathrm{Tot}\,\C$ with differential $d=d_0+d_1+\cdots+d_s$, it easily follows from the definition of an $s$-multicomplex that $(\mathrm{Tot}\,\C,d_0)$ is itself a complex, i.e. $d_0^2=0$. Therefore, it is possible to consider its cohomology. Since we are interested in subspaces and not quotients, we are going to introduce the space of Rumin forms $E_0^\bullet$ associated with the total complex $(\mathrm{Tot}\,\C,d=d_0+d_1+\cdots+d_s)$ as
\begin{align}
    E_0^h=\ker d_0 \,\cap \left(\mathrm{Im}\,d_0\right)^\perp\cap \left(\mathrm{Tot}\,\C\right)_h=\ker d_0\cap \ker \delta_0\cap\left(\mathrm{Tot}\,\C\right)_h\,,
\end{align}
the last equality following from the closed range theorem (see Remark \ref{d0 closed range}).

\end{definition}

Another crucial operator that can be defined from $d_0$ using the scalar product is its partial inverse. This operator will be central in the construction of the so-called Rumin differential $d_c$.
\begin{definition}[The partial inverse $d_0^{-1}$]\label{d_0 inverse} The map $d_0$ being linear implies that it acts as a bijection from $\left(\ker d_0\right)^\perp=\mathrm{Im}\,\delta_0$ onto $\mathrm{Im}\,d_0$. Following Rumin's construction, it is customary to use the shorthand notation $d_0^{-1}$ to denote the linear map given by 
\begin{align*}
    d_0^{-1}:=d_0^{-1}\mathrm{pr}_{\mathrm{Im}\,d_0}\colon \C_{a,b}\to \C_{a,b-1}\,.
\end{align*}
In particular, it readily follows that $(d_0^{-1})^2=0$ and $\ker d_0^{-1}=\ker\delta_0=\left(\mathrm{Im\,}d_0\right)^\perp$.
\end{definition}
\begin{definition}[Orthogonal projection onto $E_0^\bullet$] Let us consider the operator
\begin{align*}
    \Pi_0:=\mathrm{Id}-d_0^{-1}d_0-d_0d_0^{-1}\colon \C_{a,b}\longrightarrow\C_{a,b}\,.
\end{align*}
From the definition of the map $d_0^{-1}$, we have 
\begin{itemize}
    \item $\Pi_0$ maps elements of $\C_{a,b}$ onto elements of $\C_{a,b}$, i.e. it preserves both the weight and the degree;
    \item $d_0^{-1}d_0=\mathrm{pr}_{\mathrm{Im}\,\delta_0}$ and $d_0d_0^{-1}=\mathrm{pr}_{\mathrm{Im}\,d_0}$;
    \item $\Pi_0^2=\left(\mathrm{Id}-d_0^{-1}d_0-d_0d_0^{-1}\right)\left(\mathrm{Id}-d_0^{-1}d_0-d_0d_0^{-1}\right)=\mathrm{Id}-d_0^{-1}d_0-d_0d_0^{-1}=\Pi_0$.
\end{itemize}
   Therefore, the map $\Pi_0$ is an orthogonal projection onto 
   \begin{align*}
       \mathrm{Im\,}\Pi_0=\ker d_0\cap\ker d_0^{-1}=\ker d_0\cap \ker \delta_0=E_0\,,
   \end{align*}
   that is $\Pi_0=\mathrm{pr}_{E_0}$.
\end{definition}

We are interested in characterising the space of Rumin forms also as ``harmonic forms'' of the Laplacian associated with $d_0$ and its adjoint $\delta_0$. We will also see how, under the assumption of $d_0$ having a closed range, such a Laplacian admits a Hodge-decomposition.

\begin{definition}[The Laplacian $\Box_0$] Let us consider the Laplacian operator defined using $d_0$ and its adjoint
\begin{align*}
    \Box_0:=d_0\delta_0+\delta_0d_0\colon\C_{a,b}\longrightarrow\C_{a,b}\,.
\end{align*}
Just like the projection $\Pi_0$, $\Box_0$ preserves both the weight and the degree. It is a symmetric operator, that is $\langle\Box_0x,y\rangle_h=\langle x,\Box_0y\rangle_h$ for any $x,y\in\left(\mathrm{Tot}\,\C\right)_h$, and its kernel coincides with the space of Rumin forms
\begin{align*}
    \ker\Box_0=\ker d_0\cap\ker\delta_0=E_0\,.
\end{align*}
The inclusion $\ker d_0\cap\ker\delta_0\subseteq\ker\Box_0$ is clear,  while the converse one follows from the fact that
\begin{align*}
    0=\langle\Box_0x,x\rangle_h=\langle d_0\delta_0x,x\rangle_h+\langle\delta_0d_0x,x\rangle_h=\vert d_0x\vert_h^2+\vert \delta_0x\vert^2_h\ \ \text{for any }x\in\left(\mathrm{Tot}\,\C\right)_h\,.
\end{align*}
\end{definition}

\begin{proposition}[Hodge decomposition for $\Box_0$]\label{lemma hodge decomp}
    The associated total complex $\mathrm{Tot}\,\C$ admits a direct sum (or a Hodge) decomposition in terms of $\Box_0$, $d_0$, and its adjoint $\delta_0$. More explicitly,
    \begin{align}\label{eq hodge decomp}
        \mathrm{Tot}\,\C=\mathrm{Im}\,d_0\oplus\ker\Box_0\oplus\mathrm{Im}\,\delta_0
    \end{align}
\end{proposition}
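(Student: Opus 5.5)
The plan is to work degree by degree and bidegree by bidegree, using the orthogonal projection $\Pi_0=\mathrm{pr}_{E_0}$ and the identities already recorded: $d_0^{-1}d_0=\mathrm{pr}_{\mathrm{Im}\,\delta_0}$, $d_0d_0^{-1}=\mathrm{pr}_{\mathrm{Im}\,d_0}$, and $\mathrm{Im}\,\Pi_0=\ker d_0\cap\ker\delta_0=\ker\Box_0$. Since all of these operators preserve weight and degree, it suffices to prove the decomposition on each $\C_{a,b}$. Summing over $a+b=h$, with the product or sum as in \eqref{ass tot}, then gives it on $(\mathrm{Tot}\,\C)_h$, because the three summands are compatible with the weight grading.

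First I will show that the three subspaces are mutually orthogonal. Orthogonality of $\mathrm{Im}\,d_0$ and $\mathrm{Im}\,\delta_0$ follows from $\langle d_0x,\delta_0y\rangle=\langle d_0^2x,y\rangle=0$. Orthogonality of $\ker\Box_0$ to both follows from $\langle d_0x,z\rangle=\langle x,\delta_0z\rangle=0$ and $\langle\delta_0y,z\rangle=\langle y,d_0z\rangle=0$ for any $z\in\ker d_0\cap\ker\delta_0$. Pairwise orthogonality gives pairwise trivial intersections, and in fact full directness: if $u+v+w=0$ with the three terms mutually orthogonal, pairing with each term forces each one to vanish.

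Next comes spanning. For any $x$ I will write the identity
\begin{align*}
x=d_0d_0^{-1}x+\Pi_0x+d_0^{-1}d_0x\,,
\end{align*}
which is just the definition of $\Pi_0$ rearranged. The first term lies in $\mathrm{Im}\,d_0$ by construction, and the second lies in $E_0=\ker\Box_0$. For the third, $d_0^{-1}$ takes values in $(\ker d_0)^\perp$, so it remains to identify $(\ker d_0)^\perp$ with $\mathrm{Im}\,\delta_0$.

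That identification is the main obstacle, and it is exactly where Remark~\ref{d0 closed range} is needed. Elementarily, $(\mathrm{Im}\,\delta_0)^\perp=\ker d_0$, so $(\ker d_0)^\perp=\overline{\mathrm{Im}\,\delta_0}$. Closedness of $\mathrm{Im}\,\delta_0$ then follows from the closed range theorem, since $d_0$ has closed range and hence so does its adjoint. The same fact underlies the definition of $d_0^{-1}$ and the equality $(\mathrm{Im}\,d_0)^\perp=\ker\delta_0$ that the paper already uses, so I would simply invoke it. In the Carnot group case everything is finite dimensional fibrewise and the point is automatic. Combining orthogonality with the spanning identity yields \eqref{eq hodge decomp}.
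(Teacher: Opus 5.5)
Your proof is correct and is essentially the paper's argument. Both rest on the closed range assumption to identify $(\ker d_0)^\perp$ with $\mathrm{Im}\,\delta_0$, and $(\ker\delta_0)^\perp$ with $\mathrm{Im}\,d_0$ (which is what makes $d_0^{-1}$ and $\Pi_0$ well defined); the only difference is packaging, since the paper chains orthogonal complements ($\mathrm{Tot}\,\C=\ker d_0\oplus\mathrm{Im}\,\delta_0$, then $\ker d_0=\ker\Box_0\oplus\mathrm{Im}\,d_0$) whereas you prove mutual orthogonality and write the components explicitly as $x=d_0d_0^{-1}x+\Pi_0x+d_0^{-1}d_0x$.
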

\begin{proof}
    The statement follows from the fact that the orthogonal complement of the kernel of a map with closed range coincides with the range of its adjoint:
    \begin{align*}
        \mathrm{Tot}\,\C=&\ker d_0\oplus\left(\ker d_0\right)^\perp=\ker d_0\oplus\mathrm{Im}\,\delta_0\\
        =&\ker d_0\cap \ker\delta_0\oplus\ker d_0\cap\left(\ker\delta_0\right)^\perp\oplus\mathrm{Im}\,\delta_0\\=&\ker\Box_0\oplus\ker d_0\cap\mathrm{Im}\,d_0\oplus\mathrm{Im}\,\delta_0=\ker\Box_0\oplus\mathrm{Im}\,d_0\oplus\mathrm{Im}\,\delta_0
    \end{align*}
\end{proof}
\begin{remark}One should notice that in the specific case of an $s$-multicomplex there is a clear symmetry in the operators (see \cite{cirici2025modelcategorystructurestruncated} for a more thorough exploration of this symmetry). For a truncated multicomplex, we clearly have two operators $d_0$ and $d_s$ that satisfy the condition $d_0^2=d_s^2=0$, so a priori one could repeat exactly the same construction to obtain a Hodge decomposition using the $d_s$ operator. However, in the application case that we have in mind, namely differential geometry on Carnot groups, this symmetry is broken by the fact that $d_0$ and $d_s$ have different properties. In this subRiemannian setting, $d_0$  is a linear operator that acts between finite dimensional spaces, hence Remark \ref{d0 closed range} is always true. On the other hand, $d_s$ is a differential operator acting on the space of smooth forms, so in general its range will not be closed. This is the main reason why we choose to present here a Hodge decomposition of the space of $\mathrm{Tot}\,\C$ in terms of $d_0$.
\end{remark}

\subsection{Constructing the Rumin complex}
We have already defined the space of Rumin forms $E_0^\bullet$ as the space of harmonic forms for the $\Box_0$ Laplacian, which is isomorphic to the cohomology of the complex $(\mathrm{Tot}\,\C,d_0)$. We are now ready to define a cochain map $d_c\colon E_0^h\to E_0^{h+1}$ such that the cohomology of the complex $(E_0^\bullet,d_c)$ is isomorphic to the cohomology of $(\mathrm{Tot}\,\C,d=d_0+d_1+\cdots+d_s)$.

Notice that in order for this construction to work, we are assuming that the range for the weights $a$ is finite (see Remark \ref{remark 1.2}).

\begin{lemma}\label{increase weight implies nilpotent}
    If $B\colon\mathrm{Tot}\,\C\to\mathrm{Tot}\,\C$ is an operator that strictly increases the weight, i.e.
    \begin{align*}
        \text{given }x\in\C_{a,\bullet}\,,\ \text{we have that }Bx\in\C_{a',\bullet}\text{ with }a'>a\,,
    \end{align*}
    then $B$ is nilpotent, i.e. there exists $N\in\mathbb{N}$ independent of $B$ such that $B^N=0$.
\end{lemma}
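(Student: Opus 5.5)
The plan is to use the standing assumption, stated just before the lemma, that the weights $a$ of nonzero components of $\mathrm{Tot}\,\C$ range over a finite set. In the Carnot group case of Remark \ref{remark 1.2} this set is $\{0,1,\dots,Q\}$. In general, write $a_{\min}\le a\le a_{\max}$ for the weights that actually occur, and set $N:=a_{\max}-a_{\min}+1$. This $N$ depends only on $\C$, not on $B$, which is exactly the uniformity the statement asks for.

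First I will record that $B$ may be applied to homogeneous pieces separately. Every $x\in(\mathrm{Tot}\,\C)_h$ is a finite sum $x=\sum_a x_a$ with $x_a\in\C_{a,h-a}$; the sum is finite because only finitely many weights occur, so the products in \eqref{ass tot} are finite direct sums. I will implicitly take $B$ to be additive (linear), as all operators in the paper are, so that $B^N x=\sum_a B^N x_a$. It therefore suffices to prove $B^N x_a=0$ for a single homogeneous element $x_a$ of weight $a$.

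Next I will argue by induction on $k$. Suppose $B^k x_a$ lies in a sum of components of weight at least $a+k$. Then strict increase of weight, applied componentwise, gives that $B^{k+1}x_a$ has all its components of weight at least $a+k+1$. If $B$ sends a homogeneous element to a possibly inhomogeneous one, the hypothesis is read as saying that every component has weight greater than $a$, and since weights are integers this means weight at least $a+1$. Consequently $B^N x_a$ has only components of weight at least $a+N\ge a_{\min}+N>a_{\max}$. All such components vanish, so $B^N x_a=0$.

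The only delicate point is the interpretation step. One has to read the hypothesis as strict increase on each homogeneous component, and one has to use linearity, or at least compatibility with the finite weight decomposition, so that the argument for homogeneous elements extends to all of $\mathrm{Tot}\,\C$. Once the weight range is finite and integral, the rest is immediate.
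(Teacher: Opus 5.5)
Your proposal is correct and follows the same approach as the paper, whose proof is a single line appealing to the standing assumption that the range of weights is finite. You fill in the details the paper leaves implicit: integrality of the weights, the explicit uniform bound $N=a_{\max}-a_{\min}+1$, linearity of $B$, and the componentwise reading of the hypothesis for inhomogeneous images.
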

\begin{proof}
    The nilpotency of any operator that strictly increases the weight of elements in $\mathrm{Tot}\,\C$ simply follows from the fact that we are assuming the range of the weights is finite.
\end{proof}
\begin{lemma}\label{inverse of nilpotent}
    Let $A_0\colon\mathrm{Tot}\,\C\to\mathrm{Tot}\,\C$ be an invertible linear map that preserves the weights, while $B\colon\mathrm{Tot}\,\C\to\mathrm{Tot}\,\C$ is an operator that strictly increases the weights. Then the operator $A:=A_0-B\colon\mathrm{Tot}\,\C\to\mathrm{Tot}\,\C$ is invertible and its inverse $A^{-1}\colon\mathrm{Tot}\,\C\to\mathrm{Tot}\,\C$ has the form
    \begin{align*}
        A^{-1}=A_0^{-1}\sum_{j=0}^{N-1}\left( BA_0^{-1}\right)^j=\left(\sum_{j=0}^{N-1}\left(A_0^{-1}B^j\right)\right)A_0^{-1}\,,
    \end{align*}
    with $N\in\mathbb{N}$ being the nilpotency exponent of $B$ as discussed in Lemma \ref{increase weight implies nilpotent}.
\end{lemma}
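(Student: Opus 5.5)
The plan is to factor $A$ as $A_0$ times a unipotent operator and then invert that factor with a finite geometric series. Lemma \ref{increase weight implies nilpotent} guarantees the series terminates.

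The first step is to check that $A_0^{-1}$ also preserves the weights. Since the range of weights is finite, each $(\mathrm{Tot}\,\C)_h$ is the finite direct sum of its weight components $\C_{a,h-a}$. A weight-preserving $A_0$ is therefore block diagonal with respect to this decomposition. Bijectivity of $A_0$ then forces each diagonal block $A_0|_{\C_{a,\bullet}}\colon\C_{a,\bullet}\to\C_{a,\bullet}$ to be bijective: injectivity is inherited, and surjectivity holds because a preimage of an element of $\C_{a,\bullet}$ must have all its other weight components mapped to zero, hence those components vanish. So $A_0^{-1}$ is the block-diagonal operator made of the inverse blocks, and it preserves weights.

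Consequently $BA_0^{-1}$ and $A_0^{-1}B$ both strictly increase the weight. By Lemma \ref{increase weight implies nilpotent} they satisfy $(BA_0^{-1})^N=(A_0^{-1}B)^N=0$. Here $N$ can be taken to be the number of distinct weights, since each application raises the weight by at least one.

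Next I write
\begin{align*}
A=A_0-B=\left(\mathrm{Id}-BA_0^{-1}\right)A_0=A_0\left(\mathrm{Id}-A_0^{-1}B\right).
\end{align*}
For any nilpotent $T$ with $T^N=0$, the telescoping identity $(\mathrm{Id}-T)\sum_{j=0}^{N-1}T^j=\sum_{j=0}^{N-1}T^j(\mathrm{Id}-T)=\mathrm{Id}-T^N=\mathrm{Id}$ shows that $\mathrm{Id}-T$ is invertible with inverse $\sum_{j=0}^{N-1}T^j$. Applying this with $T=BA_0^{-1}$ in the first factorisation gives
\begin{align*}
A^{-1}=A_0^{-1}\sum_{j=0}^{N-1}\left(BA_0^{-1}\right)^j.
\end{align*}
Applying it with $T=A_0^{-1}B$ in the second factorisation gives
\begin{align*}
A^{-1}=\left(\sum_{j=0}^{N-1}\left(A_0^{-1}B\right)^j\right)A_0^{-1}.
\end{align*}
This second expression is the intended reading of the right-hand formula in the statement, namely $(A_0^{-1}B)^j$ rather than $A_0^{-1}B^j$. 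The two expressions agree, either by uniqueness of inverses or directly from $A_0^{-1}(BA_0^{-1})^j=(A_0^{-1}B)^jA_0^{-1}$.

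The only point needing care is the first step, that $A_0^{-1}$ preserves weights. It relies on the finite weight range making $\mathrm{Tot}\,\C$ a genuine direct sum of weight components, so that no product and completion issues arise. The remainder is the standard finite Neumann series.
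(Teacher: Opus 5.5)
Your proof is correct and follows the same route as the paper: factor $A=(\mathrm{Id}-BA_0^{-1})A_0=A_0(\mathrm{Id}-A_0^{-1}B)$, get nilpotency of $BA_0^{-1}$ and $A_0^{-1}B$ from Lemma \ref{increase weight implies nilpotent}, and invert with the finite Neumann series. You also justify that $A_0^{-1}$ preserves weights, which the paper uses without comment, and you correctly read the second formula as $\sum_j (A_0^{-1}B)^j A_0^{-1}$, since the statement's $A_0^{-1}B^j$ is a typo.
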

\begin{proof}
    By construction, we have $A=A_0-B=(\mathrm{Id}-BA_0^{-1})A_0=A_0(\mathrm{Id}-A_0^{-1}B)$. Both the operators $BA_0^{-1}$ and $A_0^{-1}B$ act on $\mathrm{Tot\,}\C$ by strictly increasing weights, hence they are both nilpotent by Lemma \ref{increase weight implies nilpotent}. As a consequence, we get invertibility via Neumann series: both $\mathrm{Id}-BA_0^{-1}$ and $\mathrm{Id}-A_0^{-1}B$ are invertible and
    \begin{align*}
        (\mathrm{Id}-BA_0^{-1})^{-1}=\sum_{j=0}^{N-1}\left(BA_0^{-1}\right)^j\ \text{ and }\ (\mathrm{Id}-A_0^{-1}B)^{-1}=\sum_{j=0}^{N-1}\left(A_0^{-1}B\right)^j\,.
    \end{align*}
    The claim follows directly from the fact that $A^{-1}=A_0^{-1}(\mathrm{Id}-BA_0^{-1})^{-1}=(\mathrm{Id}-A_0^{-1}B)^{-1}A_0^{-1}$.
\end{proof}
Using the previous results, we are now able to construct the necessary projection maps used to define the cochain map $d_c$.
\begin{lemma}\label{lemma 1.14}
    Let us consider the operator $b:=d_0^{-1}d_0-d_0^{-1}d=-d_0^{-1}(d-d_0)$ acting on $\mathrm{Tot\,}\C$.
    \begin{itemize}
        \item[1.] The map $b\colon\mathrm{Tot}\,\C\longrightarrow\mathrm{Tot}\,\C$ preserves the degree, it is nilpotent and so $\mathrm{Id}-b$ is invertible.
        \item[2.] The operator 
        \begin{align}\label{piF}
            \Pi:=(\mathrm{Id}-b)^{-1}d_0^{-1}d+d(\mathrm{Id}-b)^{-1}d_0^{-1}
        \end{align}
        is the projection of $\mathrm{Tot}\,\C$ onto
        \begin{align*}
            F:=\mathrm{Im}\,d_0^{-1}+\mathrm{Im}\,dd_0^{-1}=\mathrm{Im}\,\delta_0+\mathrm{Im}\,d\delta_0
        \end{align*}
        along
        \begin{align*}
            E:=\ker d_0^{-1}\cap\ker d_0^{-1}d=\ker\delta_0\cap\ker\delta_0d\,.
        \end{align*}
    \end{itemize}
\end{lemma}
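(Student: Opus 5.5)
The plan is to prove part 1 by weight counting, then prove part 2 by showing that $\Pi$ is idempotent with the stated image and kernel. Everything in part 2 will come from a handful of algebraic identities for $P:=(\mathrm{Id}-b)^{-1}d_0^{-1}$.

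For part 1, write $b=-d_0^{-1}(d_1+\cdots+d_s)$. Each $d_i$ has bidegree $(i,1-i)$, so it raises the degree by one and the weight by $i\ge1$. The operator $d_0^{-1}$ maps $\C_{a,b}\to\C_{a,b-1}$, so it lowers the degree by one and preserves the weight. Hence $b$ preserves the degree and strictly increases the weight. Lemma \ref{increase weight implies nilpotent} then gives nilpotency, and Lemma \ref{inverse of nilpotent} with $A_0=\mathrm{Id}$ gives invertibility of $\mathrm{Id}-b$, with $Q:=(\mathrm{Id}-b)^{-1}=\sum_{j<N}b^j$.

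For part 2, set $P:=Qd_0^{-1}$, so that $\Pi=Pd+dP$. I will establish three identities, using only $(d_0^{-1})^2=0$ and $d_0^{-1}d_0d_0^{-1}=d_0^{-1}$ (the latter since $d_0^{-1}d_0=\mathrm{pr}_{\mathrm{Im}\,\delta_0}$).
\begin{itemize}
\item[(i)] $Qd_0^{-1}=d_0^{-1}R$ for some operator $R$. This holds because every $b^j$ with $j\ge1$ begins with $d_0^{-1}$. It yields $P^2=Qd_0^{-1}d_0^{-1}R=0$ and $\mathrm{Im}\,P\subseteq\mathrm{Im}\,d_0^{-1}$.
\item[(ii)] $(\mathrm{Id}-b)d_0^{-1}=d_0^{-1}dd_0^{-1}$. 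This follows by expanding $\mathrm{Id}-b=\mathrm{Id}-d_0^{-1}d_0+d_0^{-1}d$. Applying $Q$ gives $Pdd_0^{-1}=d_0^{-1}$, hence $\mathrm{Im}\,d_0^{-1}\subseteq\mathrm{Im}\,P$. Together with (i), $\mathrm{Im}\,P=\mathrm{Im}\,d_0^{-1}$.
\item[(iii)] $PdP=P$. Start from $(\mathrm{Id}-b)Qd_0^{-1}=d_0^{-1}$, which gives
\begin{align*}
d_0^{-1}dQd_0^{-1}=d_0^{-1}-(\mathrm{Id}-d_0^{-1}d_0)Qd_0^{-1}\,.
\end{align*}
The last term vanishes because, by (i), $\mathrm{Im}\,Qd_0^{-1}\subseteq\mathrm{Im}\,\delta_0$. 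Applying $Q$ on the left yields $PdP=P$.
\end{itemize}

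With these identities, idempotency is immediate: $\Pi^2=PdPd+Pd^2P+dP^2d+dPdP=Pd+dP=\Pi$. For the image, $\Pi P=PdP+dP^2=P$ and $\Pi dP=dPdP=dP$, so $\Pi$ fixes $\mathrm{Im}\,P+\mathrm{Im}\,dP$. Conversely, $\mathrm{Im}\,\Pi\subseteq\mathrm{Im}\,P+\mathrm{Im}\,dP$ by definition. By (ii), $\mathrm{Im}\,P+\mathrm{Im}\,dP=\mathrm{Im}\,d_0^{-1}+\mathrm{Im}\,dd_0^{-1}=F$.

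For the kernel, the inclusion $E\subseteq\ker\Pi$ is direct. If $d_0^{-1}x=0$ and $d_0^{-1}dx=0$, then $Px=0$ and $Pdx=0$. Conversely, $P\Pi=P^2d+PdP=P$, so $\Pi x=0$ forces $Px=0$. Since $Q$ is invertible, this gives $d_0^{-1}x=0$. Then $0=\Pi x=Pdx$ gives $d_0^{-1}dx=0$ in the same way. The alternative descriptions of $E$ and $F$ in terms of $\delta_0$ follow from $\ker d_0^{-1}=\ker\delta_0$ and $\mathrm{Im}\,d_0^{-1}=\mathrm{Im}\,\delta_0$.

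The main obstacle I expect is identity (iii). It is the one place where the non-commutativity of $Q$ with $d_0^{-1}$ and $d$ matters, and it hinges on noticing that $Qd_0^{-1}$ takes values in $\mathrm{Im}\,\delta_0$, so that the $\mathrm{Id}-d_0^{-1}d_0$ term drops out.
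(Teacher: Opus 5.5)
Your proof is correct and follows the same route as the paper. Both show that $\Pi$ is the identity on $F=\mathrm{Im}\,d_0^{-1}+\mathrm{Im}\,dd_0^{-1}$ with $\mathrm{Im}\,\Pi\subseteq F$, and then recover $\ker\Pi=E$ from a left identity: your $P\Pi=P$ is the paper's $d_0^{-1}\Pi=d_0^{-1}$ up to the invertible factor $(\mathrm{Id}-b)^{-1}$. Your identities (i)--(iii) just make explicit the computations the paper attributes to ``the properties of $d$, $d_0$, and $d_0^{-1}$''. In particular, you spell out that $(\mathrm{Id}-b)^{-1}d_0^{-1}$ takes values in $\mathrm{Im}\,d_0^{-1}$, which the paper only alludes to through the Neumann series.
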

\begin{proof}$\phantom{=}$
    \begin{itemize}
        \item [1.] By definition, the operator $b$ strictly increases the weight and so by Lemma \ref{increase weight implies nilpotent} it is nilpotent, i.e. there exists $N\in\mathbb{N}$ such that $b^N=0$. We can now apply Lemma \ref{inverse of nilpotent} with $A_0=\mathrm{Id}$ and $B=b$ so the operator $\mathrm{Id}-b$ is invertible and its inverse if given by
        \begin{align}\label{Id-b inverse}
            \left(\mathrm{Id}-b\right)^{-1}=\sum_{j=0}^{N-1}b^j=\sum_{j=0}^{N-1}\left[-d_0^{-1}(d-d_0)\right]^{j}\,.
        \end{align}
        This also implies that $\Pi$ as defined in \eqref{piF} is a well-defined operator on $\mathrm{Tot}\,\C$.
        \item[2.] By \eqref{piF} and \eqref{Id-b inverse}, we have that the image of $\Pi$ is included in the subspace $F\subset\mathrm{Tot}\,\C$ defined in the statement. Moreover, using the properties of $d$, $d_0$, and $d_0^{-1}$ we get that
        \begin{align*}
            \Pi d_0^{-1}=d_0^{-1}\ \text{ and }\ \Pi dd_0^{-1}=dd_0^{-1}
        \end{align*}
        so that $\Pi=\mathrm{Id}$ on $F$ and $\Pi^2=\Pi$, i.e. $\Pi$ is a projection onto $\mathrm{Im}\,\Pi=F$ along $\ker\Pi$. Finally, $\ker \Pi$ contains the subspace $E$ introduced in the statement. Since we also have the equalities
        \begin{align*}
            d_0^{-1}\Pi=d_0^{-1}\ \text{ and }\ d_0^{-1}d\Pi=d_0^{-1}d\,,
        \end{align*}
        we obtain that $\ker\Pi=E$.
    \end{itemize}
\end{proof}
Following Rumin's notation, we denote the two projections onto $F$ and $E$ respectively as
\begin{align*}
    \Pi_F:=\Pi\ \text{ and }\ \Pi_E:=\mathrm{Id}-\Pi\,,
\end{align*}
where $\Pi$ is the operator defined in \eqref{piF}.

\begin{lemma}\label{properties of proj} The projector operators $\Pi_E$, $\Pi_F$, and $\Pi_0$ enjoy the following properties:
\begin{itemize}
    \item [1.]$d_0^{-1}\Pi_E=\Pi_Ed_0^{-1}=0$;
    \item[2.]$d\Pi_F=\Pi_Fd$ and $d\Pi_E=\Pi_Ed$;
    \item[3.] $\Pi_0\Pi_E\Pi_0=\Pi_0$ and $\Pi_E\Pi_0\Pi_E=\Pi_E$.
\end{itemize}
    
\end{lemma}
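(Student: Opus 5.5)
The plan is to derive all three items from the explicit formula \eqref{piF} for $\Pi=\Pi_F$, from the characterisation of $E=\ker\Pi$ and $F=\mathrm{Im}\,\Pi$ in Lemma \ref{lemma 1.14}, and from the identities $(d_0^{-1})^2=0$, $d^2=0$, $\ker d_0^{-1}=\ker\delta_0$ and $\Pi_0=\mathrm{Id}-d_0^{-1}d_0-d_0d_0^{-1}$.

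For item 1, I start from the identities $d_0^{-1}\Pi=d_0^{-1}$ and $\Pi d_0^{-1}=d_0^{-1}$ established in the proof of Lemma \ref{lemma 1.14}. They give directly $d_0^{-1}\Pi_E=d_0^{-1}-d_0^{-1}\Pi=0$ and $\Pi_Ed_0^{-1}=d_0^{-1}-\Pi d_0^{-1}=0$. If I want to recheck these two identities rather than quote them, I would argue as follows.
\begin{itemize}
\item For $\Pi d_0^{-1}=d_0^{-1}$: the first summand of $\Pi d_0^{-1}$ is $(\mathrm{Id}-b)^{-1}d_0^{-1}dd_0^{-1}$, and I rewrite $d_0^{-1}dd_0^{-1}=d_0^{-1}d_0d_0^{-1}-bd_0^{-1}=(\mathrm{Id}-b)d_0^{-1}$, using $d_0^{-1}d_0d_0^{-1}=d_0^{-1}$. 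The second summand is $d(\mathrm{Id}-b)^{-1}d_0^{-1}d_0^{-1}$. Every term of the Neumann series ends in $d_0^{-1}$, so this summand vanishes because $(d_0^{-1})^2=0$.
\item For $d_0^{-1}\Pi=d_0^{-1}$: the key point is that $b$ starts with $d_0^{-1}$, so $d_0^{-1}b=0$ and hence $d_0^{-1}(\mathrm{Id}-b)^{-1}=d_0^{-1}$. The first summand then gives $d_0^{-1}d_0^{-1}d=0$. For the second summand, $d_0^{-1}d(\mathrm{Id}-b)^{-1}d_0^{-1}=(d_0^{-1}d_0-b)(\mathrm{Id}-b)^{-1}d_0^{-1}$, and I use $d_0^{-1}d_0\,d_0^{-1}=d_0^{-1}$.
\end{itemize}

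For item 2, I compute $d\Pi$ and $\Pi d$ from \eqref{piF}. Since $d^2=0$, $\Pi d=(\mathrm{Id}-b)^{-1}d_0^{-1}d^2+d(\mathrm{Id}-b)^{-1}d_0^{-1}d=d(\mathrm{Id}-b)^{-1}d_0^{-1}d$, and likewise $d\Pi=d(\mathrm{Id}-b)^{-1}d_0^{-1}d+d^2(\dots)=d(\mathrm{Id}-b)^{-1}d_0^{-1}d$. So $d\Pi_F=\Pi_Fd$, and subtracting from $d=d\,\mathrm{Id}$ gives $d\Pi_E=\Pi_Ed$.

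Item 3 is the main obstacle. The idea is that $\Pi_E-\mathrm{Id}$ takes values in $F\subset\mathrm{Im}\,d_0^{-1}+\mathrm{Im}\,d$, while $\Pi_0$ kills $\mathrm{Im}\,d_0$ and $\mathrm{Im}\,\delta_0$; but $\mathrm{Im}\,d$ is not killed by $\Pi_0$, so the weight filtration has to be used.
\begin{itemize}
\item For $\Pi_0\Pi_E\Pi_0=\Pi_0$, I would show $\Pi_0\Pi_F\Pi_0=0$. Write $\Pi_F=(\mathrm{Id}-b)^{-1}d_0^{-1}d+d(\mathrm{Id}-b)^{-1}d_0^{-1}$. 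Every term of the first summand begins on the left with $d_0^{-1}$, so its image lies in $\mathrm{Im}\,\delta_0$, which $\Pi_0$ kills. For the second summand, $\Pi_0d\,(\dots)d_0^{-1}\Pi_0=0$, because $d_0^{-1}\Pi_0=0$ (as $\mathrm{Im}\,\Pi_0=E_0\subset\ker\delta_0$).
\item For $\Pi_E\Pi_0\Pi_E=\Pi_E$, I use $\mathrm{Id}-\Pi_0=d_0^{-1}d_0+d_0d_0^{-1}$ and aim for $\Pi_E(d_0^{-1}d_0+d_0d_0^{-1})\Pi_E=0$. The first term vanishes because $\Pi_Ed_0^{-1}=0$ by item 1. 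For the second, write $d_0=d-(d-d_0)$ and use item 2 to move $d$ past $\Pi_E$: $\Pi_E d\,d_0^{-1}\Pi_E=d\,\Pi_Ed_0^{-1}\Pi_E=0$, again by item 1. What remains is $\Pi_E(d-d_0)d_0^{-1}\Pi_E$, and by item 1 this is already $0$ since $d_0^{-1}\Pi_E=0$.
\end{itemize}
Hence both identities follow from items 1 and 2. The only care needed is to check that the first summand of $\Pi_F$ really starts on the left with $d_0^{-1}$, which holds because $(\mathrm{Id}-b)^{-1}=\mathrm{Id}+b+\cdots$ and each $b$ begins with $d_0^{-1}$.
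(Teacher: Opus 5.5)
Your proposal is correct and follows essentially the paper's route. Item 1 comes from $d_0^{-1}\Pi=\Pi d_0^{-1}=d_0^{-1}$, item 2 from $d\Pi=d(\mathrm{Id}-b)^{-1}d_0^{-1}d=\Pi d$ via $d^2=0$, and item 3 from expanding $\Pi_0=\mathrm{Id}-d_0^{-1}d_0-d_0d_0^{-1}$ together with item 1 and $\mathrm{Im}\,\Pi\Pi_0\subset\mathrm{Im}\,d_0^{-1}$; killing the summand $d(\mathrm{Id}-b)^{-1}d_0^{-1}$ with $d_0^{-1}\Pi_0=0$ is a bit more careful than the paper's literal claim $\mathrm{Im}\,\Pi\subset\mathrm{Im}\,d_0^{-1}$. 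Your split $d_0=d-(d-d_0)$ in $\Pi_E d_0d_0^{-1}\Pi_E$ is superfluous, since that term vanishes at once from $d_0^{-1}\Pi_E=0$, and no weight-filtration argument is needed there despite your remark.
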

\begin{proof}The equalities in 1. follow from the fact that $d_0^{-1}\Pi=\Pi d_0^{-1}$, while those in 2. can easily be checked by carrying out the explicit computations that show $d\Pi=d(\mathrm{Id}-b)^{-1}d_0^{-1}d=\Pi d$.

Finally, to get the equalities in 3. we need to apply the fact that $\mathrm{Im\,}\Pi\subset \mathrm{Im}\,d_0^{-1}\subset (\ker\Box_0)^\perp$ and the equalities in 1., so that
\begin{align*}
    \Pi_E\Pi_{0}\Pi_E=&\Pi_E^2-\Pi_E(d_0d_0^{-1}+d_0^{-1}d_0)\Pi_E=\Pi_E\ \text{ and }\ \Pi_0\Pi_E\Pi_0=\Pi_0^2-\Pi_0\Pi\,\Pi_0=\Pi_0\,.
\end{align*}
    
\end{proof}
Lemma \ref{properties of proj} implies the following result.
\begin{proposition}[Theorem 2.6 in \cite{rumin_grenoble}]
    The associated total complex $(\mathrm{Tot}\,\C,d=d_0+d_1+\cdots+d_s)$ splits into two subcomplexes $(E^\bullet,d)$ and $(F^\bullet,d)$. Moreover, the following operator
    \begin{align*}
        d_c:=\Pi_0d\Pi_E\Pi_0\colon E_0^\bullet\subset\mathrm{Tot}\,\C\longrightarrow E_0^\bullet\subset\mathrm{Tot}\,\C
    \end{align*}
    satisfies 
    \begin{align*}
        d_c^2=0\ \text{ and for any }x\in(\mathrm{Tot}\,\C)_h\ d_cx\in(\mathrm{Tot}\,\C)_{h+1}\,.
    \end{align*}
    Finally, the complex $(E_0^\bullet,d_c)$ computes the same cohomology as the total complex $(\mathrm{Tot}\,\C,d)$ and it is known as the Rumin complex
\end{proposition}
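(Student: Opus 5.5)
The plan is to build everything on Lemma \ref{lemma 1.14} and Lemma \ref{properties of proj}.

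\emph{Splitting.} Since $\Pi_E+\Pi_F=\mathrm{Id}$ and both are complementary projections, $\mathrm{Tot}\,\C=E\oplus F$ as graded spaces, where $E=\mathrm{Im}\,\Pi_E$ and $F=\mathrm{Im}\,\Pi_F$. By item 2 of Lemma \ref{properties of proj}, $d\Pi_E=\Pi_Ed$ and $d\Pi_F=\Pi_Fd$. Hence $d(E)\subset E$ and $d(F)\subset F$, so $(E^\bullet,d)$ and $(F^\bullet,d)$ are subcomplexes. Each of $b$, $d_0^{-1}$ and $d$ shifts degree by a fixed amount, so $\Pi$ preserves degree and the splitting is degreewise.

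\emph{$F$ is acyclic.} Take $x\in F$ with $dx=0$. Then $x=\Pi x=(\mathrm{Id}-b)^{-1}d_0^{-1}dx+d(\mathrm{Id}-b)^{-1}d_0^{-1}x=d\big((\mathrm{Id}-b)^{-1}d_0^{-1}x\big)$. The element $y=(\mathrm{Id}-b)^{-1}d_0^{-1}x$ lies in $\mathrm{Im}\,d_0^{-1}\subset F$, because $b$ and $(\mathrm{Id}-b)^{-1}$ are built from $d_0^{-1}$ on the left, apart from the identity term. So $x=dy$ with $y\in F$, and $H(\mathrm{Tot}\,\C,d)\cong H(E,d)$.

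\emph{Identifying $E$ with $E_0$.} By item 3 of Lemma \ref{properties of proj}, the maps $\Pi_E\colon E_0\to E$ and $\Pi_0\colon E\to E_0$ are mutually inverse, since $\Pi_0\Pi_E=\mathrm{Id}$ on $E_0$ and $\Pi_E\Pi_0=\mathrm{Id}$ on $E$. Both preserve degree, so they are isomorphisms of graded spaces. Transport $d|_E$ through this isomorphism to get $\Pi_0 d\Pi_E$ on $E_0$, which equals $d_c$ on $E_0$ because $\Pi_0=\mathrm{Id}$ there. The degree claim is then immediate.

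\emph{$d_c^2=0$.} On $E_0$ we have
\[
d_c^2=\Pi_0d\Pi_E\Pi_0\Pi_0 d\Pi_E\Pi_0=\Pi_0 d(\Pi_E\Pi_0\Pi_E)d\Pi_E\Pi_0=\Pi_0 d\Pi_E d\Pi_E\Pi_0 .
\]
Using $d\Pi_E=\Pi_E d$, this becomes $\Pi_0\Pi_E d^2\Pi_E\Pi_0=0$. Here the middle step used $d\Pi_E\Pi_0=\Pi_E d\Pi_0$, which lies in $E$, so inserting $\Pi_E\Pi_0$ is legitimate by item 3.

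\emph{Cohomology.} The maps $\Pi_E$ and $\Pi_0$ are mutually inverse cochain isomorphisms between $(E_0^\bullet,d_c)$ and $(E^\bullet,d)$. Combined with the acyclicity of $F$, this gives $H(E_0,d_c)\cong H(\mathrm{Tot}\,\C,d)$.

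The main obstacle is checking that $y$ genuinely lies in $F$, and that the identities in Lemma \ref{lemma 1.14} (such as $\Pi d_0^{-1}=d_0^{-1}$) hold as claimed. Both rely on the explicit Neumann series \eqref{Id-b inverse}, and I would verify them by expanding that series.
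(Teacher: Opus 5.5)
Your proof is correct and takes the same route as the paper. The commuting projections give the splitting, item 3 of Lemma \ref{properties of proj} conjugates $(E^\bullet,d)$ to $(E_0^\bullet,d_c)$, and your acyclicity argument for $F$ is the explicit content of the paper's claim that $\Pi_E$ is a homotopy equivalence: \eqref{piF} reads $\Pi=hd+dh$ with $h=(\mathrm{Id}-b)^{-1}d_0^{-1}$, and $y=hx\in\mathrm{Im}\,d_0^{-1}\subset F$ because every term of the Neumann series for $h$ begins with $d_0^{-1}$ on the left.
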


\begin{proof}
    The claim follows easily from the fact that, by property 2. in Lemma \ref{properties of proj}, $\Pi_E$ is a homotopical equivalence between the total complex $(\mathrm{Tot}\,\C,d)$ and the complex $(E^\bullet,d)$. Finally, property 3. in Lemma \ref{properties of proj} further implies that $E$ and $E_0$ are in bijection, and that $\Pi_E$ restricted to $E_0$ and $\Pi_0$ restricted to $E$ are inverse maps of each other. Hence the complex $(E^\bullet,d)$ is conjugated to $(E_0^\bullet,d_c)$ with $d_c=\Pi_0d\Pi_E\Pi_0$, as defined in the statement.
\end{proof}
\subsection{Introducing a shorthand notation for the operator $d_c$}
Before proceeding to express the spaces arising from the spectral sequence associated to our $s$-multicomplex, we are going to study the explicit expression of the $d_c$. In the process, we will introduce a new shorthand notation in order to make the computations needed in the next sections more accessible.
\begin{definition}\label{def partial r}Given the structure maps $d_0,d_1,\ldots,d_s$ of our $s$-multicomplex together with the partial inverse $d_0^{-1}$ of Definition \ref{d_0 inverse}, we are going to define by induction the following maps on $\mathrm{Tot}\,\C$:
\begin{align}
    \partial_1=d_1\ \text{ and }\partial_r=d_r-\sum_{j=1}^{r-1}d_{r-j}d_0^{-1}\partial_j\ \text{ for }r\ge 2.
\end{align}
\end{definition}
For clarity, let us see the explicit expression of the operator $\partial_3$:
\begin{align*}
    \partial_3=&d_3-\sum_{j=1}^2d_{3-j}d_0^{-1}\partial_j=d_3-d_2d_0^{-1}\partial_1-d_1d_0^{-1}\partial_2=d_3-d_2d_0^{-1}d_1-d_1d_0^{-1}\left(d_2-d_1d_0^{-1}\partial_1\right)\\=&d_3-d_2d_0^{-1}d_1-d_1d_0^{-1}d_2+d_1d_0^{-1}d_1d_0^{-1}d_1
\end{align*}
\begin{remark}
    Notice that by definition each operator $\partial_r$ has bidegree $\vert \partial_r\vert=(r,1-r)$, that is it increases the weight by $r$ and the degree by 1:
    \begin{align*}
        \text{ for any }x\in \C_{a,b}\ \text{we have }\partial_rx\in \C_{a+r,b+1-r}\,.
    \end{align*}
\end{remark}
\begin{lemma}[Expressing $d_c$ on $E_0$]\label{expression of d_c} 
    Using the operators introduced in Definition \ref{def partial r}, one can express the operator $d_c$ acting on $E_0$ as follows:
    \begin{align}\label{the rumin diff}
        d_cx=\Pi_0\sum_{r=1}^{N-1}\partial_rx=\sum_{r=1}^{N-1}\partial_rx-d_0^{-1}d_0\sum_{r=1}^{N-1}\partial_rx-d_0d_0^{-1}\sum_{r=1}^{N-1}\partial_rx\ \text{ for any }x\in E_0
    \end{align}
    where $N\in\mathbb{N}$ is independent of $x$ or $d$ and it is such that $\partial_Nx=0$.
\end{lemma}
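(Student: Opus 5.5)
Starting from $d_c=\Pi_0 d\Pi_E\Pi_0$ on $E_0$, the plan is to compute $\Pi_E x=x-\Pi x$ explicitly for $x\in E_0$ and then apply $\Pi_0 d$. Because $x\in E_0\subset\ker\delta_0=\ker d_0^{-1}$, the second summand of $\Pi$ in \eqref{piF}, namely $d(\mathrm{Id}-b)^{-1}d_0^{-1}x$, vanishes. Hence $\Pi_E x=x-(\mathrm{Id}-b)^{-1}d_0^{-1}dx$. Using $d_0x=0$, we have $d_0^{-1}dx=d_0^{-1}(d-d_0)x=-bx$. Thus $\Pi_E x=x+\sum_{j\ge1}b^jx=(\mathrm{Id}-b)^{-1}x$ by \eqref{Id-b inverse}, so
\begin{align*}
\Pi_E x=\sum_{j=0}^{N-1}\left[-d_0^{-1}(d-d_0)\right]^jx .
\end{align*}
Next we apply $d$ and then $\Pi_0$. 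Since $\Pi_0 d_0=0$ (because $\mathrm{Im}\,d_0\perp E_0$), we get $\Pi_0 d\Pi_Ex=\Pi_0(d-d_0)\Pi_Ex$.

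The core of the proof is then an identity showing that, sorted by weight, $(d-d_0)(\mathrm{Id}-b)^{-1}x$ coincides with $\sum_r\partial_r x$ up to terms in $\mathrm{Im}\,d_0$, which $\Pi_0$ removes. We decompose $y:=\Pi_Ex=\sum_{k\ge0}y_k$ with $y_k$ the component of weight $w(x)+k$. Then $y_0=x$ and, from $y=x-d_0^{-1}(d-d_0)y$, the recursion is
\[
y_k=-d_0^{-1}\sum_{i=1}^{k}d_iy_{k-i}.
\]
The claim to prove by induction is
\[
y_k=-d_0^{-1}\partial_kx \ (k\ge1),
\]
and it amounts to $\sum_{i=1}^{k}d_iy_{k-i}=\partial_kx$. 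For $k=1$ this is $d_1x=\partial_1x$. For general $k$, substituting the inductive hypothesis gives
\[
d_kx-\sum_{i=1}^{k-1}d_id_0^{-1}\partial_{k-i}x=d_kx-\sum_{j=1}^{k-1}d_{k-j}d_0^{-1}\partial_jx,
\]
which is exactly Definition \ref{def partial r}. The weight-$(w(x)+r)$ component of $(d-d_0)y$ is therefore $\sum_{i=1}^r d_iy_{r-i}=\partial_rx$. So $d\Pi_Ex=\sum_r\partial_rx+d_0(\cdots)$, and applying $\Pi_0$ gives $d_cx=\Pi_0\sum_r\partial_rx$.

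The second expression in \eqref{the rumin diff} follows from $\Pi_0=\mathrm{Id}-d_0^{-1}d_0-d_0d_0^{-1}$. The bound on $N$ comes from finiteness of the weight range (Lemma \ref{increase weight implies nilpotent}): $\partial_r$ raises the weight by $r$, so $\partial_r=0$ once $r$ exceeds the width of the weight range. I expect the main obstacle to be the bookkeeping in the inductive step. One must carefully match the truncation index of the Neumann series with the one for $\partial_r$, and check that reindexing the double sum agrees with Definition \ref{def partial r}, including the fact that $d_i=0$ for $i\ge s$.
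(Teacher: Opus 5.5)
Your proof is correct and follows essentially the same route as the paper. Both arguments show, for $x\in E_0$, that $\Pi_E x=x-\sum_r d_0^{-1}\partial_r x$ via the Neumann series for $(\mathrm{Id}-b)^{-1}$ sorted by weight, and then apply $\Pi_0 d$. Your version is somewhat tidier: you observe directly that $\Pi_E x=(\mathrm{Id}-b)^{-1}x$, you prove $y_k=-d_0^{-1}\partial_k x$ by an actual induction where the paper only expands the first few terms, and you use $\Pi_0 d_0=0$ instead of first checking that $d\Pi_E x\in(\mathrm{Im}\,d_0)^\perp$. Your implicit assumption that $x$ has a single weight is harmless, since $\Pi_0$ preserves weight, so $E_0$ splits by weight and every map involved is linear.
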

\begin{proof}
    Let us first focus on the action of $\Pi$ on elements of $E_0$. By definition, if $x\in E_0$ then $x\in\ker d_0\cap\ker \delta_0=\ker d_0\cap \ker d_0^{-1}$ and so the action of $\Pi$ simplifies to \begin{align*}
        \Pi x=&(\mathrm{Id}-b)^{-1}d_0^{-1}dx=\sum_{j=0}^{N-1}\left[-d_0^{-1}(d-d_0)\right]^jd_0^{-1}(d-d_0)x=\sum_{j=1}^{N-1}(-1)^{j-1}[d_0^{-1}(d-d_0)]^jx\\=&d_0^{-1}(d-d_0)x-[d_0^{-1}(d-d_0)]^2x+[d_0^{-1}(d-d_0)]^3x+\cdots+(-1)^{N-2}[d_0^{-1}(d-d_0)]^{N-1}x\\=&\underbrace{d_0^{-1}d_1}_{d_0^{-1}\partial_1}x+\underbrace{d_0^{-1}\left(d_2-d_1d_0^{-1}d_1\right)}_{d_0^{-1}\partial_2}x+\underbrace{d_0^{-1}\left(d_3-d_2d_0^{-1}d_1-d_1d_0^{-1}d_2+d_1d_0^{-1}d_1d_0^{-1}d_1\right)}_{d_0^{-1}\partial_3}x+\cdots\\=&\sum_{r=1}^{N-1}d_0^{-1}\partial_rx
    \end{align*}
    so that 
    \begin{align*}
        d\Pi_Ex=&d\left(x-\sum_{r=1}^{N-1}d_0^{-1}\partial_rx\right)=(d-d_0)x-d_0d_0^{-1}\sum_{r=1}^{N-1}\partial_rx-(d-d_0)\sum_{r=1}^{N-1}d_0^{-1}\partial_rx\\=&\sum_{j=1}^sd_jx-\sum_{j=1}^sd_jd_0^{-1}\sum_{r=1}^{N-1}\partial_rx-d_0d_0^{-1}\sum_{r=1}^{N-1}\partial_rx\\
        =&d_1x+d_2x+\cdots+d_sx-d_1d_0^{-1}\sum_{r=1}^{N-1}\partial_rx-d_2d_0^{-1}\sum_{r=1}^{N-1}\partial_rx-\cdots-d_sd_0^{-1}\sum_{r=1}^{N-1}\partial_rx-d_0d_0^{-1}\sum_{r=1}^{N-1}\partial_rx\\=&\underbrace{d_1x}_{\partial_1x}+\underbrace{d_2x-d_1d_0^{-1}\partial_1x}_{\partial_2x}+\underbrace{d_3x-d_1d_0^{-1}\partial_2-d_2d_0^{-1}\partial_1x}_{\partial_3x}+\cdots-d_0d_0^{-1}\sum_{r=1}^{N-1}\partial_rx\\=&\sum_{r=1}^{N-1}\partial_rx-d_0d_0^{-1}\sum_{r=1}^{N-1}\partial_rx=\sum_{r=1}^{N-1}\partial_rx-\mathrm{pr}_{\mathrm{Im}\,d_0}\sum_{r=1}^{N-1}\partial_rx=\mathrm{pr}_{(\mathrm{Im}\,d_0)^\perp}\sum_{r=1}^{N-1}\partial_rx\,.
    \end{align*}
    Finally, since $d_0^{-1}$ acts trivially on $(\mathrm{Im}\,d_0)^\perp$ and we have just shown that $d\Pi_Ex\in(\mathrm{Im}\,d_0)^\perp$, we get
    \begin{align*}
        \Pi_0d\Pi_Ex=&\left(\mathrm{Id}-d_0^{-1}d_0\right)d\Pi_Ex=\left(\mathrm{Id}-d_0^{-1}d_0\right)\left(\sum_{r=1}^{N-1}\partial_rx-d_0d_0^{-1}\sum_{r=1}^{N-1}\partial_rx\right)\\=&\sum_{r=1}^{N-1}\partial_rx-d_0^{-1}d_0\sum_{r=1}^{N-1}\partial_rx-d_0d_0^{-1}\sum_{r=1}^{N-1}\partial_rx\,.
    \end{align*}
\end{proof}
\begin{remark}
    The fact that $d\Pi_Ex$ belongs to $(\mathrm{Im}\,d_0)^\perp$ also follows from the fact that $d\Pi_Ex=\Pi_Edx\in E\subset\ker\delta_0=(\mathrm{Im}\,d_0)^\perp$ by Lemma \ref{lemma 1.14}.

    Moreover, in general, there will be no relationship between $s$ and $N$. Depending on the multicomplex considered, as well as both the weight and degree of the element $x\in\mathrm{Tot}\,\C$ considered, one could have $s$ bigger, smaller or equal to $N$. The only thing that is certain is that they are both finite positive integers. For example, in the case where $N-1>s$ we will have that 
    \begin{align*}
        \partial_rx=d_rx-\sum_{j=1}^{r-1}d_{r-j}d_0^{-1}\partial_jx=-\sum_{j=r-s}^{r-1}d_{r-j}d_0^{-1}\partial_{j}\ \text{ for any }r>s\,.
    \end{align*}
\end{remark}
Since $d_0d_0^{-1}$ and $d_0^{-1}d_0$ are both projections of bidegree $(0,0)$, i.e. they keep both the weight and the degree constant, the operator $d_c$ consists of a sum of operators $\partial_r-d_0d_0^{-1}\partial_r-d_0^{-1}d_0\partial_r$ of bidegree $\vert\partial_r\vert=(r,1-r)$. Since we will be interested in each one of such operators, we introduce the following notation.
\begin{definition}\label{d_c^r}
    Given the Rumin differential $d_c$ with expression as in \eqref{the rumin diff}, we introduce the notation
    \begin{align*}
        d_c^r=\partial_r-d_0^{-1}d_0\partial_r-d_0d_0^{-1}\partial_r\ \text{ for each }r=1,\ldots,N-1
    \end{align*}
    to denote each addend of $d_c$ of bidegree $\vert d_c^r\vert=(r,1-r)$.
\end{definition}
\section{Characterising spectral sequences in terms of the Rumin differentials}\label{section 3}

In this section, we focus on the spectral sequence arising from an $s$-multicomplex. Our main goal is to clarify the structure of this spectral sequence by showing that the graded modules $Z_r^{p,\bullet}$ and $B_r^{p,\bullet}$, introduced in \cite{livernet2020spectral} to describe the quotient spaces $E_r^{p,\bullet}$ at each page, admit an equivalent formulation in terms of Rumin forms and the action of the Rumin differential $d_c$. For the sake of completeness and to make the discussion self-contained, we recall below the definitions of the modules $Z_r^{p,\bullet}$ and $B_r^{p,\bullet}$, with the indexing adapted to agree with the bidegree convention adopted in Definition~\ref{def multicomplex}.

\begin{definition}[Definition 2.6 in \cite{livernet2020spectral}]\label{Z and B defined} Let $\alpha\in\C_{p,\bullet}$ and let $r\ge 1$. We define subgraded modules $Z_r^{p,\bullet}$ and $B_r^{p,\bullet}$ of $\C_{p,\bullet}$ as follows.
\begin{align*}
    \alpha\in Z_r^{p,\bullet}\ \Longleftrightarrow&\ \text{for }1\le j\le r-1\,,\text{ there exists }z_{p+j}\in\C_{p+j,\bullet}\text{ such that}\\&\ d_0\alpha=0\text{ and }d_n\alpha=\sum_{i=0}^{n-1}d_iz_{p+n-i}\text{ for all }1\le n\le r-1\,.\\
    \alpha\in B_r^{p,\bullet}\ \Longleftrightarrow&\ \text{for }0\le k\le r-1\text{ there exists }c_{p-k}\in\C_{p-k,\bullet}\text{ such that}\\&\ \alpha=\sum_{k=0}^{r-1}d_kc_{p-k}\text{ and }0=\sum_{k=l}^{r-1}d_{k-l}c_{p-k}\text{ for }1\le l\le r-1\,.
\end{align*}
\end{definition}

As recalled in Proposition \ref{lemma hodge decomp}, we have a Hodge-decomposition coming from the Laplacian $\Box_0=d_0\delta_0+\delta_0d_0$ for any element in $\mathrm{Tot}\,\C$. To make this extra structure explicit, we introduce the following notation.
\begin{definition}
    Let $\alpha\in\mathrm{Tot}\,\C$. Following the Hodge-decomposition described in \eqref{eq hodge decomp}, we have
    \begin{align*}
        \alpha=\check{\alpha}+\overline{\alpha}+\hat{\alpha}
    \end{align*}
    where
    \begin{itemize}
        \item $\check{\alpha}=d_0d_0^{-1}\alpha\in\mathrm{Im}\,d_0$;
        \item $\hat{\alpha}=d_0^{-1}d_0\alpha\in\mathrm{Im}\,\delta_0$;
        \item $\overline{\alpha}=\Pi_0\alpha=\alpha-d_0d_0^{-1}\alpha-d_0^{-1}d_0\alpha\in\ker\Box_0$.
    \end{itemize}
    
\end{definition}
 Throughout this section, for any $\alpha\in\C_{p,\bullet}$, we will use the notation $\beta_p$ in the case where $\check{\alpha}=d_0\beta_p$, as a way to keep track of the weight of the element $\beta_p$.

    Notice that in general such an element $\beta_p\in\C_{p,\bullet}$ will not belong just to $\mathrm{Im}\,\delta_0$. This follows from the fact that solutions of $d_0\beta_p=\check{\alpha}$ are determined up to adding an element of $\ker d_0$. Since $\ker d_0=\ker\Box_0\oplus\mathrm{Im\,}d_0$, we have that
    \begin{align*}
        \beta_p=\check{\beta}_p+\overline{\beta}_p+\hat{\beta}_p\ \text{ with }\ \hat{\beta}_p=d_0^{-1}d_0\beta_p=d_0^{-1}\alpha=d_0^{-1}\check{\alpha}\,.
    \end{align*}
    Notice that the harmonic representatives for such an algebraic Laplacian are indeed Rumin forms. For example, for any $\alpha\in\C_{p,\bullet}$, we have that $\overline{\alpha}=\Pi_0\alpha\in E_0$ is a Rumin form (unless trivial).

    Finally, the following formula will come in handy for the computations of the rest of the section.

    \begin{lemma}\label{d0 of partialr}
        Given an arbitrary element $\alpha\in\C_{p,\bullet}$, then $\partial_1\overline{\alpha}\in\ker d_0$, i.e. $d_0\partial_1\overline{\alpha}=0$, and for any $r\ge 2$
        \begin{align}\label{formula d0 of partial r}
            d_0\partial_r\overline{\alpha}=-\sum_{i=1}^{r-1}d_i\left(\partial_{r-i}-d_0d_0^{-1}\partial_{r-i}\right)\overline{\alpha}\,.
        \end{align}
    \end{lemma}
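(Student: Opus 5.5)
We argue by strong induction on $r$, using only the multicomplex relations \eqref{diff relations}, the identity $d_0d_0^{-1}=\mathrm{pr}_{\mathrm{Im}\,d_0}$ from Definition \ref{d_0 inverse}, and the recursive definition of $\partial_r$ in Definition \ref{def partial r}. Since $\overline{\alpha}=\Pi_0\alpha\in E_0\subset\ker d_0$, the base case is immediate: $d_0\partial_1\overline{\alpha}=d_0d_1\overline{\alpha}=-d_1d_0\overline{\alpha}=0$, using the relation $d_0d_1+d_1d_0=0$ ($n=1$ in \eqref{diff relations}). For convenience we write $\gamma_j:=\partial_j\overline{\alpha}$ and $\tilde\gamma_j:=(\partial_j-d_0d_0^{-1}\partial_j)\overline{\alpha}=\gamma_j-d_0d_0^{-1}\gamma_j$, so that the claim reads $d_0\gamma_r=-\sum_{i=1}^{r-1}d_i\tilde\gamma_{r-i}$.

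For $r\ge2$ we expand $d_0\gamma_r=d_0d_r\overline{\alpha}-\sum_{j=1}^{r-1}d_0d_{r-j}d_0^{-1}\gamma_j$. In the first term we use $d_0d_r=-\sum_{i=1}^{r}d_id_{r-i}$, and since $d_0\overline{\alpha}=0$ this gives $d_0d_r\overline{\alpha}=-\sum_{i=1}^{r-1}d_id_{r-i}\overline{\alpha}$. In the second term we write $d_0d_{r-j}=-\sum_{i=1}^{r-j}d_id_{r-j-i}$ and isolate the summand $i=r-j$, which is $d_{r-j}d_0$. Applied to $d_0^{-1}\gamma_j$ it produces $d_{r-j}d_0d_0^{-1}\gamma_j$, exactly the correction term $-d_i\,d_0d_0^{-1}\gamma_{r-i}$ needed in the target formula (with $i=r-j$). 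The remaining summands, with $1\le i<r-j$, give $+\sum_{j}\sum_{i<r-j}d_id_{r-j-i}d_0^{-1}\gamma_j$.

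Next we regroup by the outer index $i$. The term $-d_id_{r-i}\overline{\alpha}$ together with $+\sum_{j=1}^{r-i-1}d_id_{r-i-j}d_0^{-1}\gamma_j$ equals $-d_i\bigl(d_{r-i}-\sum_{j=1}^{r-i-1}d_{r-i-j}d_0^{-1}\partial_j\bigr)\overline{\alpha}=-d_i\gamma_{r-i}$, by the definition of $\partial_{r-i}$. Adding the isolated terms $+d_id_0d_0^{-1}\gamma_{r-i}$ then gives $-\sum_{i=1}^{r-1}d_i\tilde\gamma_{r-i}$, as claimed. Note that this argument never actually uses the induction hypothesis; it is a direct computation, so induction is needed only for bookkeeping, if at all.

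The main obstacle is the index bookkeeping in the double sum: checking that, for fixed $i$, the range $1\le j\le r-i-1$ arising from the constraint $i<r-j$ matches the range in the definition of $\partial_{r-i}$, and that the terms with $i=r-j$ are exactly the projection corrections. We also need truncation ($d_k=0$ for $k\ge s$) to cause no trouble; it does not, since the relations hold for all $n$ with the vanishing $d_k$ included.
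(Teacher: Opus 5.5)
Your proof is correct and matches the paper's argument: a direct expansion of $d_0\partial_r\overline{\alpha}$ via $d_0d_m=-\sum_{i=1}^{m}d_id_{m-i}$. You isolate the $d_{r-j}d_0$ summands to produce the $d_0d_0^{-1}$ corrections, then regroup the remaining double sum into $d_i\partial_{r-i}\overline{\alpha}$. As you noted, the induction hypothesis is never used, and the paper also proceeds by direct computation.
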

    \begin{proof}
        The first claim readily follows from the fact that $\overline{\alpha}\in\ker\Box_0\cap\C_{p,\bullet}$, so that
        \begin{align*}
            d_0\partial_1\overline{\alpha}=&d_0d_1\overline{\alpha}=-d_1d_0\overline{\alpha}=0\,.
        \end{align*}
        Here we are using the relations of the structure maps \eqref{diff relations} together with the fact that $\overline{\alpha}\in\ker \Box_0\subset\ker d_0$.

        If $r=2$, then
        \begin{align*}
            d_0\partial_2\overline{\alpha}=&d_0\left(d_2-d_1d_0^{-1}d_1\right)\overline{\alpha}=-d_1^2\overline{\alpha}-d_2\underbrace{d_0\overline{\alpha}}_{=0}+d_1d_0d_0^{-1}d_1\overline{\alpha}=-d_1\left(d_1\overline{\alpha}-d_0d_0^{-1}d_1\overline{\alpha}\right)\,.
        \end{align*}
        In general, by the relations of the structure maps \eqref{diff relations}, we have
        \begin{align}\label{d0dr}
            d_0d_r=-\sum_{i=1}^{r-1}d_id_{r-i}-d_rd_0\ \text{ for any }r\ge 1
        \end{align}
        and so
        \begin{align*}
            d_0\partial_r\overline{\alpha}=&d_0\left(d_r-\sum_{i=1}^{r-1}d_{r-i}d_0^{-1}\partial_i\right)\overline{\alpha}=d_0d_r\overline{\alpha}-\sum_{i=1}^{r-1}d_0d_{r-i}d_0^{-1}\partial_i\overline{\alpha}\\=&-\sum_{i=1}^{r-1}d_id_{r-i}\overline{\alpha}-d_rd_0\overline{\alpha}+\sum_{i=1}^{r-1}\sum_{j=1}^{r-i-1}d_jd_{r-i-j}d_0^{-1}\partial_i\overline{\alpha}+\sum_{i=1}^{r-1}d_{r-i}d_0d_0^{-1}\partial_i\overline{\alpha}\\=&-\sum_{i=1}^{r-1}d_id_{r-i}\overline{\alpha}+\sum_{j=1}^{r-2}\sum_{i=1}^{r-j-1}d_jd_{r-i-j}d_0^{-1}\partial_i\overline{\alpha}+\sum_{i=1}^{r-1}d_id_0d_0^{-1}\partial_{r-i}\overline{\alpha}\\=&-d_{r-1}d_1\overline{\alpha}+d_{r-1}d_0d_0^{-1}\partial_1\overline{\alpha}-\sum_{i=1}^{r-2}\left(d_id_{r-i}-\sum_{j=1}^{r-i-1}d_id_{(r-i)-j}d_0^{-1}\partial_j\right)\overline{\alpha}+\sum_{i=1}^{r-2}d_id_0d_0^{-1}\partial_{r-i}\overline{\alpha}\\=&-d_{r-1}\left(d_1-d_0d_0^{-1}d_1\right)\overline{\alpha}-\sum_{i=1}^{r-2}d_i\left(d_{r-i}-\sum_{j=1}^{r-i-1}d_{(r-i)-j}d_0^{-1}\partial_j-d_0d_0^{-1}\partial_{r-i}\right)\overline{\alpha}\\=&-\sum_{i=1}^{r-1}d_i\left(\partial_{r-i}-d_0d_0^{-1}\partial_{r-i}\right)\overline{\alpha}\,.
        \end{align*}
    \end{proof}

    \subsection{Expressing the spaces $Z_r^{p,\bullet}$ in terms of Rumin forms and the Rumin differential}
    \subsubsection*{r=1} Using Definition \ref{Z and B defined} for $r=1$, we have
    \begin{align*}
        \alpha\in Z_1^{p,\bullet}\ \Longleftrightarrow\ d_0\alpha=0\ \Longleftrightarrow\ \alpha=\check{\alpha}+\overline{\alpha}\text{ and }\hat{\alpha}=d_0^{-1}d_0\alpha=0\,.
    \end{align*}
    This can be rephrased more explicitly as
    \begin{align*}
    \alpha=d_0\beta_p+\overline{\alpha}\text{ for some }\beta_p\in\C_{p,\bullet}\,.
    \end{align*}
    \subsubsection*{r=2} Using Definition \ref{Z and B defined} for $r=2$, we have
    \begin{align*}
        \alpha\in Z_2^{p,\bullet}\ \Longleftrightarrow\ d_0\alpha=0\text{ and there exists a }z_{p+1}\in\C_{p+1,\bullet}\text{ such that }d_1\alpha=d_0z_{p+1}\,.
    \end{align*}
    Our claim is that the condition $\alpha\in Z_2^{p,\bullet}$ is equivalent to requiring $\alpha=d_0\beta_p+\overline{\alpha}$ and $d_c^1\overline{\alpha}=0$. Indeed,
    \begin{align*}
        d_1\alpha=d_1(d_0\beta_p+\overline{\alpha})=d_0z_{p+1}\ \Longleftrightarrow\ d_1\overline{\alpha}=d_0z_{p+1}-d_1d_0\beta_p=d_0(z_{p+1}+d_1\beta_p)\,,
    \end{align*}
where we are using the relations of the structure maps \eqref{diff relations}, namely $d_0d_1+d_1d_0=0$.

Moreover, by Lemma \ref{d0 of partialr}, we know that $d_1\overline{\alpha}\in\ker d_0$, but also 
\begin{align*}
    d_1\overline{\alpha}=d_0\omega_{p+1}\text{ with }\omega_{p+1}:=z_{p+1}+d_1\beta_p\ \Longrightarrow\ d_1\overline{\alpha}\in\mathrm{Im}\,d_0
\end{align*}
and so
\begin{align*}
    d_c^1\overline{\alpha}=d_1\overline{\alpha}-d_0d_0^{-1}d_1\overline{\alpha}-d_0^{-1}d_0d_1\overline{\alpha}=d_1\overline{\alpha}-d_0d_0^{-1}d_0\omega_{p+1}=d_1\overline{\alpha}-d_0\omega_{p+1}=0\,.
\end{align*}

To summarise, we know that $\alpha=d_0\beta_p+\overline{\alpha}$ for some $\beta_p\in\C_{p,\bullet}$ (which coincides with the condition $\alpha\in Z_1^{p,\bullet}$). Moreover, knowing that there exists $\omega_{p+1}=z_{p+1}+d_1\beta_p\in\C_{p+1,\bullet}$ such that $d_1\overline{\alpha}=d_0\omega_{p+1}$ and that $\partial_1\overline{\alpha}=d_1\overline{\alpha}\in\ker d_0$, we get that
\begin{align*}
    d_1\overline{\alpha}=d_1\overline{\alpha}-&d_0d_0^{-1}d_1\overline{\alpha}+d_0d_0^{-1}d_1\overline{\alpha}=d_c^1\overline{\alpha}+d_0d_0^{-1}d_1\overline{\alpha}=d_0\omega_{p+1}\\
    &\Longleftrightarrow\ \underbrace{d_c^1\overline{\alpha}}_{\in\ker\Box_0}=\underbrace{d_0(\omega_{p+1}-d_0^{-1}d_1\overline{\alpha})}_{\in\mathrm{Im}\,d_0}\,.
\end{align*}
By the direct sum decomposition in \eqref{eq hodge decomp} we have that both the right and left hand sides must vanish, i.e.
\begin{itemize}
    \item $d_c^1\overline{\alpha}=0$;
    \item $d_0(\omega_{p+1}-d_0^{-1}d_1\overline{\alpha})=0$, which means that $\hat{\omega}_{p+1}-d_0^{-1}d_1\overline{\alpha}=d_0^{-1}d_0\omega_{p+1}-d_0^{-1}d_1\overline{\alpha}=0$, that is $\hat{\omega}_{p+1}=d_0^{-1}d_1\overline{\alpha}$ with $\omega_{p+1}=z_{p+1}+d_1\beta_{p}$.
\end{itemize}
\subsubsection*{r=3} Using Definition \ref{Z and B defined} for $r=3$, we have
\begin{align*}
    \alpha\in Z_3^{p,\bullet}\ \Longleftrightarrow&\ d_0\alpha=0\text{ and there exist }z_{p+i}\in \C_{p+i,\bullet}\text{ with }i=1,2\text{ such that}\\ &\ d_1\alpha=d_0z_{p+1}\text{ and }d_2\alpha=d_1z_{p+1}+d_0z_{p+2}\,.
\end{align*}
From the previous steps, we already know that
\begin{itemize}
    \item $\alpha=d_0\beta_p+\overline{\alpha}$ for some $\beta_p\in\C_{p,\bullet}$;
    \item $d_c^1\overline{\alpha}=0$;
    \item $z_{p+1}=\omega_{p+1}-d_1\beta_p=\check{\omega}_{p+1}+\overline{\omega}_{p+1}+\hat{\omega}_{p+1}-d_1\beta_p=d_0\beta_{p+1}+\overline{\omega}_{p+1}+d_0^{-1}d_1\overline{\alpha}-d_1\beta_p$ for some $\beta_{p+1}\in\C_{p+1,\bullet}$
\end{itemize}
The additional condition $d_2\alpha=d_1z_{p+1}+d_0z_{p+2}$ then means that there exists some $z_{p+2}\in\C_{p+2,\bullet}$ such that
\begin{align*}
    d_2&(d_0\beta_p+\overline{\alpha})=d_1(d_0\beta_{p+1}+\overline{\omega}_{p+1}+d_0^{-1}d_1\overline{\alpha}-d_1\beta_p)+d_0z_{p+2}\\
    &\Leftrightarrow\ d_2\overline{\alpha}-d_1d_0^{-1}d_1\overline{\alpha}-d_1\overline{\omega}_{p+1}=-d_2d_0\beta_p+d_1d_0\beta_{p+1}-d_1^2\beta_p+d_0z_{p+2}\\
    &\Leftrightarrow\ \partial_2\overline{\alpha}-d_1\overline{\omega}_{p+1}=d_0d_2\beta_p+d_1^2\beta_p+d_1d_0\beta_{p+1}-d_1^2\beta_p+d_0z_{p+2}\\
    &\Leftrightarrow\ \partial_2\overline{\alpha}-d_1\overline{\omega}_{p+1}=d_0(d_2\beta_p-d_1\beta_{p+1}+z_{p+2})\,.
\end{align*}
Again, by Lemma \ref{d0 of partialr},  $\partial_2\overline{\alpha}-d_1\overline{\omega}_{p+1}\in\ker d_0$, since
\begin{align*}
    d_0(\partial_2\overline{\alpha}-d_1\overline{\omega}_{p+1})=&-d_1\left(d_1\overline{\alpha}-d_0d_0^{-1}d_1\overline{\alpha}\right)=-d_1\underbrace{d_c^1\overline{\alpha}}_{=0}=0\,.
\end{align*}
Just like before, we have
\begin{align*}
    \partial_2\overline{\alpha}-d_1\overline{\omega}_{p+1}=&\partial_2\overline{\alpha}-d_1\overline{\omega}_{p+1}-d_0d_0^{-1}\left(\partial_2\overline{\alpha}-d_1\overline{\omega}_{p+1}\right)+d_0d_0^{-1}\left(\partial_2\overline{\alpha}-d_1\overline{\omega}_{p+1}\right)\\=&d_c^2\overline{\alpha}-d_c^1\overline{\omega}_{p+1}+d_0d_0^{-1}\left(\partial_2\overline{\alpha}-d_1\overline{\omega}_{p+1}\right)=d_0\left(d_2\beta_p-d_1\beta_{p+1}+z_{p+2}\right)
\end{align*}
so that
    \begin{align*}
        \underbrace{d_c^2\overline{\alpha}-d_c^1\overline{\omega}_{p+1}}_{\in\ker\Box_{0}}=d_0\left[\omega_{p+2}-d_0^{-1}(\partial_2\overline{\alpha}-d_1\overline{\omega}_{p+1})\right]
    \end{align*}
    where we introduced $\omega_{p+2}:=d_2\beta_p-d_1\beta_{p+1}+z_{p+2}$. Reasoning like in the previous step, we get the equalities
    \begin{itemize}
        \item $d_c^2\overline{\alpha}-d_c^1\overline{\omega}_{p+1}=0$, and
        \item $\hat{\omega}_{p+2}=d_0^{-1}(\partial_2\overline{\alpha}-d_1\overline{\omega}_{p+1})$.
    \end{itemize}
    Therefore, the condition $\alpha\in Z_3^{p,\bullet}$ can be rephrased as
    \begin{itemize}
        \item $\alpha=d_0\beta_p+\overline{\alpha}$ for some $\beta_p\in\C_{p,\bullet}$;
        \item $d_c^1\overline{\alpha}=0$;
        \item $d_c^2\overline{\alpha}-d_c^1\overline{\omega}_{p+1}=0$ for some $\overline{\omega}_{p+1}\in\ker\Box_0\cap\C_{p+1,\bullet}$.
    \end{itemize}
 \subsubsection*{r=4} Using Definition \ref{Z and B defined} for $r=4$, we have
 \begin{align*}
         \alpha\in Z_4^{p,\bullet}\ \Longleftrightarrow&\ d_0\alpha=0\text{ and there exist }z_{p+i}\in\C_{p+i,\bullet}\text{ with }i=1,2,3\text{ such that}\\     &\ d_1\alpha=d_0z_{p+1}\,,\ d_2\alpha=d_1z_{p+1}+d_0z_{p+2}\,,\ d_3\alpha=d_2z_{p+1}+d_1z_{p+2}+d_0z_{p+3}\,.
 \end{align*}
     From the previous steps, we already know that
     \begin{itemize}
         \item $\alpha=d_0\beta_p+\overline{\alpha}$ for some $\beta_p\in\C_{p,\bullet}$;
         \item $d_c^1\overline{\alpha}=0$ and $z_{p+1}=d_0\beta_{p+1}+\overline{\omega}_{p+1}+d_0^{-1}d_1\overline{\alpha}-d_1\beta_p$ for some $\beta_{p+1}\in\C_{p+1,\bullet}$ and $\overline{\omega}_{p+1}\in\C_{p+1,\bullet}\cap\ker\Box_0$;
         \item $d_c^{2}\overline{\alpha}-d_c^1\overline{\omega}_{p+1}=0$, and
         \item $z_{p+2}=\omega_{p+2}+d_1\beta_{p+1}-d_2\beta_p=\check{\omega}_{p+2}+\overline{\omega}_{p+2}+\hat{\omega}_{p+2}+d_1\beta_{p+1}-d_2\beta_p=d_0\beta_{p+2}+\overline{\omega}_{p+2}+d_0^{-1}(\partial_2\overline{\alpha}-d_1\overline{\omega}_{p+1})+d_1\beta_{p+1}-d_2\beta_p$ for some $\beta_{p+2}\in\C_{p+2,\bullet}$ and $\overline{\omega}_{p+2}\in\C_{p+2,\bullet}\cap\ker\Box_0$.
     \end{itemize}
     The additional condition 
 \begin{align*}
d_3\alpha=d_2z_{p+1}+d_1z_{p+2}+d_0z_{p+3}\text{ for some }z_{p+3}\in\C_{p+3,\bullet}
     \end{align*}
     then reads
    \begin{align*}     d_3(d_0\beta_p+\overline{\alpha})=&d_2(d_0\beta_{p+1}+\overline{\omega}_{p+1}+d_0^{-1}d_1\overline{\alpha}-d_1\beta_p)+\\&+d_1\left[d_0\beta_{p+2}+\overline{\omega}_{p+2}+d_0^{-1}(\partial_2\overline{\alpha}-d_1\overline{\omega}_{p+1})+d_1\beta_{p+1}-d_2\beta_p\right]+d_0z_{p+3}\\
     \end{align*}
     Just like before, $\partial_3\overline{\alpha}-\partial_2\overline{\omega}_{p+1}-d_1\overline{\omega}_{p+2}\in\ker d_0$, since
    \begin{align*}  d_0\left(\partial_3\overline{\alpha}-\partial_2\overline{\omega}_{p+1}-d_1\overline{\omega}_{p+2}\right)=&-d_2d_c^1\overline{\alpha}-d_1\left(d_c^2\overline{\alpha}-d_c^1\overline{\omega}_{p+1}\right)=0
     \end{align*}
     and so we get
     \begin{itemize}
         \item $d_c^{3}\overline{\alpha}-d_c^2\overline{\omega}_{p+1}-d_c^1\overline{\omega}_{p+2}=0$ and
         \item $\hat{\omega}_{p+3}=d_0^{-1}\left(\partial_3\overline{\alpha}-\partial_2\overline{\omega}_{p+1}-d_1\overline{\omega}_{p+2}\right)$ where $\omega_{p+3}=d_3\beta_p-d_2\beta_{p+1}-d_1\beta_{p+2}+z_{p+3}$.
     \end{itemize}
     In other words, the condition $\alpha\in Z_4^{p,\bullet}$ can be rephrased as
     \begin{itemize}
         \item $\alpha=d_0\beta_p+\overline{\alpha}$ for some $\beta_p\in\C_{p,\bullet}$;
         \item $d_c^1\overline{\alpha}=0$ and $z_{p+1}=d_0\beta_{p+1}-d_1\beta_p+\overline{\omega}_{p+1}+d_0^{-1}d_1\overline{\alpha}$ for some $\beta_{p+1}\in\C_{p+1,\bullet}$ and $\overline{\omega}_{p+1}\in\C_{p+1,\bullet}\cap\ker\Box_0$;
         \item $d_c^2\overline{\alpha}-d_c^1\overline{\omega}_{p+1}=0$ and $z_{p+2}=d_0\beta_{p+2}+d_1\beta_{p+1}-d_2\beta_p+\overline{\omega}_{p+2}+d_0^{-1}\left(\partial_2\overline{\alpha}-d_1\overline{\omega}_{p+1}\right)$ for some $\beta_{p+2}\in\C_{p+2,\bullet}$ and $\overline{\omega}_{p+2}\in\C_{p+2,\bullet}\cap\ker\Box_0$;
         \item $d_c^3\overline{\alpha}-d_c^2\overline{\omega}_{p+1}-d_c^1\overline{\omega}_{p+2}=0$ and $z_{p+3}=d_0\beta_{p+3}+d_1\beta_{p+2}+d_2\beta_{p+1}-d_3\beta_p+\overline{\omega}_{p+3}+d_0^{-1}(\partial_3\overline{\alpha}-\partial_2\overline{\omega}_{p+1}-\partial_1\overline{\omega}_{p+2})$ for some $\beta_{p+3}\in\C_{p+3,\bullet}$ and $\overline{\omega}_{p+3}\in
         \C_{p+3,\bullet}\cap\ker\Box_0$
     \end{itemize}
         Expressed in terms of Rumin forms and their differentials, this means that $\alpha\in Z_4^{p,\bullet}$ if and only if
     \begin{itemize}
         \item $\alpha=d_0\beta_p+\overline{\alpha}$ for some $\beta_p\in\C_{p,\bullet}$;
         \item $d_c^1\overline{\alpha}=0$;
         \item $d_c^2\overline{\alpha}-d_c^1\overline{\omega}_{p+1}=0$ for some $\overline{\omega}_{p+1}\in\C_{p+1,\bullet}\cap\ker\Box_0$;
         \item $d_c^3\overline{\alpha}-d_c^2\overline{\omega}_{p+1}-d_c^1\overline{\omega}_{p+2}=0$ for some $\overline{\omega}_{p+2}\in\C_{p+2,\bullet}\cap\ker\Box_0$.
     \end{itemize}
    \begin{proposition}\label{prop expression Z_r}Let us prove by induction that for any $r\ge 2$, the condition $\alpha\in Z_{r}^{p,\bullet}$ is equivalent to saying that $\alpha=d_0\beta_p+\overline{\alpha}$ for some $\beta_p\in\C_{p,\bullet}$ and there exist $\overline{\omega}_{p+i}\in\C_{p+i,\bullet}\cap\ker\Box_0$ with $i=1,\ldots,r-2$ such that
    \begin{align*}
        d_c^i\overline{\alpha}=\sum_{j=1}^{i-1}d_c^{i-j}\overline{\omega}_{p+j}\ \text{ for each }i=1,\ldots,r-1
    \end{align*}
        
    \end{proposition}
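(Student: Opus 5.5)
We argue by induction on $r$, strengthening the statement so that it also records the auxiliary data, exactly as in the cases $r=2,3,4$ worked out above. The strengthened claim is that $\alpha\in Z_r^{p,\bullet}$ if and only if $\alpha=d_0\beta_p+\overline{\alpha}$ and there exist $\beta_{p+i}\in\C_{p+i,\bullet}$ and $\overline{\omega}_{p+i}\in\C_{p+i,\bullet}\cap\ker\Box_0$ for $1\le i\le r-2$ (with $\overline{\omega}_p:=\overline{\alpha}$) satisfying two conditions. First, the identities
\begin{align*}
d_c^i\overline{\alpha}=\sum_{j=1}^{i-1}d_c^{i-j}\overline{\omega}_{p+j}\quad\text{for } i\le r-1 .
\end{align*}
Second, the witnesses in Definition \ref{Z and B defined} take the form
\begin{align*}
z_{p+n}=d_0\beta_{p+n}+\sum_{k=1}^{n}(-1)^{k+1}\cdots+\overline{\omega}_{p+n}+d_0^{-1}\Big(\partial_n\overline{\alpha}-\sum_{j=1}^{n-1}\partial_{n-j}\overline{\omega}_{p+j}\Big),
\end{align*}
where the dots stand for the signed combination $d_n\beta_p$, $d_{n-1}\beta_{p+1},\dots,d_1\beta_{p+n-1}$ appearing in the $r=4$ computation. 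Equivalently, we set $\omega_{p+n}:=z_{p+n}+(\text{that } d_k\beta \text{ combination})$ and require $\hat{\omega}_{p+n}=d_0^{-1}\big(\partial_n\overline{\alpha}-\sum_{j}\partial_{n-j}\overline{\omega}_{p+j}\big)$. The base cases $r=2,3$ are already established above.

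For the inductive step, assume the claim for $r$ and impose the extra condition $d_{r-1}\alpha=\sum_{i=0}^{r-2}d_iz_{p+r-1-i}$ that defines $Z_{r+1}^{p,\bullet}$. We substitute the inductive forms of $\alpha$ and of $z_{p+1},\dots,z_{p+r-2}$. The $\beta$-terms are then collected using the structure relations \eqref{diff relations}, in the rearranged form \eqref{d0dr}, so that all of them assemble into $d_0(\cdot)$ of a single element. This is a telescoping of $\sum_{i+j=n}d_id_j=0$, just as at $r=3,4$. The $d_0^{-1}$-terms combine with $d_k\overline{\omega}$ via the recursive definition of $\partial_r$ (Definition \ref{def partial r}), so the condition becomes
\begin{align*}
\Theta:=\partial_{r-1}\overline{\alpha}-\sum_{j=1}^{r-2}\partial_{r-1-j}\overline{\omega}_{p+j}=d_0\,\omega_{p+r-1},
\end{align*}
where $\omega_{p+r-1}$ is $z_{p+r-1}$ plus an explicit combination of the $d_k\beta_{p+l}$.

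Next we show $d_0\Theta=0$. Applying Lemma \ref{d0 of partialr} to each $\overline{\omega}_{p+j}$ (and to $\overline{\alpha}$), and regrouping the double sum by the index of the outer $d_i$, gives
\begin{align*}
d_0\Theta=-\sum_{i=1}^{r-2}d_i\Big(d_c^{r-1-i}\overline{\alpha}-\sum_{j}d_c^{r-1-i-j}\overline{\omega}_{p+j}\Big).
\end{align*}
This uses that on $\ker\Box_0$ the expression $\partial_m-d_0d_0^{-1}\partial_m$ equals $d_c^m$, because $d_0\partial_m\overline{\omega}$ need not vanish but is annihilated by $d_0^{-1}d_0$ only after projection. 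We therefore need to check that the term $d_0^{-1}d_0\partial_m$ is harmless. It is, because $d_0\partial_m\overline{\omega}$ is itself controlled inductively, so in the end we only need $d_0\Theta=0$. By the inductive hypothesis each bracket vanishes, so $\Theta\in\ker d_0$.

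Finally, we decompose $\Theta=\Pi_0\Theta+d_0d_0^{-1}\Theta$, using $\Theta\in\ker d_0$. Note that $\Pi_0\Theta=d_c^{r-1}\overline{\alpha}-\sum_j d_c^{r-1-j}\overline{\omega}_{p+j}$. The Hodge decomposition \eqref{eq hodge decomp} then forces this harmonic part to vanish, and forces $\hat{\omega}_{p+r-1}=d_0^{-1}\Theta$. Writing $\omega_{p+r-1}=d_0\beta_{p+r-1}+\overline{\omega}_{p+r-1}+\hat\omega_{p+r-1}$ yields both the new identity and the new form of $z_{p+r-1}$.

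For the converse direction, given such $\overline{\omega}$'s we define the $z$'s by the displayed formulas with all $\beta_{p+i}=0$, and reverse the computation.

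We expect the main obstacle to be the bookkeeping in two places: the telescoping of the $\beta$-terms, and the regrouping that gives $d_0\Theta$. We would handle the first by writing a general lemma of the form $\sum_{i}d_i(\text{$\beta$-combination})=d_0(\cdot)$, proved directly from \eqref{diff relations}. For the second, the key is the identity $\Pi_0\partial_m=d_c^m$ on $\ker\Box_0$, together with Lemma \ref{d0 of partialr} applied uniformly to $\overline{\alpha}$ and to every $\overline{\omega}_{p+j}$.
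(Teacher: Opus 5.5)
Your strategy is the paper's own: a strengthened induction recording $\omega_{p+n}$, $\beta_{p+n}$ and $\hat{\omega}_{p+n}$; collapsing the $\beta$-terms with \eqref{d0dr} and the $d_0^{-1}$-terms with the recursion defining $\partial_r$; proving $d_0$-closedness via Lemma \ref{d0 of partialr}; and splitting with \eqref{eq hodge decomp}. However, the step you yourself flag as delicate is argued incorrectly.

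On $\ker\Box_0$ it is not true that $\partial_m-d_0d_0^{-1}\partial_m=d_c^m$. Since $d_c^m=\Pi_0\partial_m$, the difference is $d_0^{-1}d_0\partial_m$, which is in general nonzero (already $d_0\partial_2\overline{x}=-d_1d_c^1\overline{x}$ by Lemma \ref{d0 of partialr}). Saying that ``in the end we only need $d_0\Theta=0$'' assumes what you are trying to prove. Set $\Theta_n:=\partial_n\overline{\alpha}-\sum_{j=1}^{n-1}\partial_{n-j}\overline{\omega}_{p+j}$. What the regrouping really gives is
\begin{align*}
d_0\Theta_m=-\sum_{i=1}^{m-1}d_i\bigl(\mathrm{Id}-d_0d_0^{-1}\bigr)\Theta_{m-i},\qquad \bigl(\mathrm{Id}-d_0d_0^{-1}\bigr)\Theta_n=\Pi_0\Theta_n+d_0^{-1}d_0\Theta_n .
\end{align*}
So the correction terms do not vanish individually. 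They occur only as $d_0^{-1}d_0\Theta_n$ with $n<m$, and they vanish only because every lower-order $\Theta_n$ is itself $d_0$-closed.

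That fact is missing from your strengthened hypothesis: knowing $\hat{\omega}_{p+n}=d_0^{-1}\Theta_n$ says nothing about the $\mathrm{Im}\,\delta_0$-component of $\Theta_n$. It is exactly the third item the paper carries through its induction. With it, each bracket equals $\Pi_0\Theta_n=d_c^n\overline{\alpha}-\sum_j d_c^{n-j}\overline{\omega}_{p+j}$, which vanishes by the lower identities. In the forward direction you could extract it from the earlier steps ($\Theta_n=d_0\omega_{p+n}$). In the converse it has to be proved at stage $n$ and carried forward, since it is what upgrades $\Pi_0\Theta_n=0$ to $\Theta_n\in\mathrm{Im}\,d_0$ and hence produces $z_{p+n}$.

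There are two smaller slips.
\begin{enumerate}
\item The equation that $Z_{r+1}^{p,\bullet}$ adds to $Z_r^{p,\bullet}$ is $d_r\alpha=\sum_{i=0}^{r-1}d_iz_{p+r-i}$. The $n=r-1$ equation you impose already holds on $Z_r^{p,\bullet}$ and only reproduces the identity for $i=r-1$. Everything else in your step is consistent with passing from $Z_{r-1}^{p,\bullet}$ to $Z_r^{p,\bullet}$, so just shift the index.
\item The $\beta$-part of $z_{p+n}$ is $-d_n\beta_p+\sum_{j=1}^{n-1}d_{n-j}\beta_{p+j}$, not alternating in sign. The telescoping via \eqref{d0dr} only works with these signs.
\end{enumerate}
With these repairs, your converse (taking $\beta_{p+i}=0$ for $i\ge 1$) goes through.
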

    \begin{proof} 
To show the claim, it will be necessary to also show that for each $i=1,\ldots,r-1$
\begin{itemize}
    \item $\omega_{p+i}=z_{p+i}+d_i\beta_p-\sum_{j=1}^{i-1}d_{i-j}\beta_{p+j}$, where
    \item $d_0\beta_{p+i}=\check{\omega}_{p+i}$ and $\hat{\omega}_{p+i}=d_0^{-1}\left(\partial_i\overline{\alpha}-\sum_{j=1}^{i-1}\partial_{i-j}\overline{\omega}_{
   p+j} \right)$, and
   \item $\partial_i\overline{\alpha}-\sum_{j=1}^{i-1}\partial_{i-j}\overline{\omega}_{p+j}\in\ker d_0$.
\end{itemize}
Let us show this by induction. Given the previous explicit cases where $r\le 4$, we are left to show that if these formulae hold for $\alpha\in Z_{r}^{p,\bullet}$, then they also hold for $Z_{r+1}^{p,\bullet}$.

Following Definition \ref{Z and B defined}, the condition $\alpha\in Z_{r+1}^{p,\bullet}$ simply reads  as $\alpha\in Z_r^{p,\bullet}$ and additionally there exists an element $z_{p+r}\in\C_{p+r,\bullet}$ such that
\begin{align*}
    d_r\alpha=d_{r-1}z_{p+1}+d_{r-2}z_{p+2}+\cdots+d_1z_{p+r-1}+d_0z_{p+r}
\end{align*}
where, by the inductive hypothesis,
\begin{itemize}
    \item $\alpha=d_0\beta_p+\overline{\alpha}$ for some $\beta_p\in\C_{p,\bullet}$, and
    \item $z_{p+i}=\omega_{p+i}-d_i\beta_p+\sum_{j=1}^{i-1}d_{i-j}\beta_{p+j}=d_0\beta_{p+i}+\overline{\omega}_{p+i}+d_0^{-1}\left(\partial_i\overline{\alpha}-\sum_{j=1}^{i-1}\partial_{i-j}\overline{\omega}_{p+j}\right)-d_i\beta_p+\sum_{j=1}^{i-1}d_{i-j}\beta_{p+j}$ for each $i=1,\ldots,r-1$.
\end{itemize}
Therefore, we get
\begin{align*}
    d_r\alpha=&d_r(d_0\beta_p+\overline{\alpha})=\sum_{i=1}^{r-1}d_{r-i}z_{p+i}+d_0z_{p+r}=\sum_{i=1}^{r-1}d_{r-i}d_0\beta_{p+i}+\sum_{i=1}^{r-1}d_{r-i}\overline{\omega}_{p+i}+\sum_{i=1}^{r-1}d_{r-i}d_0^{-1}\partial_i\overline{\alpha}+\\&-\sum_{i=1}^{r-1}d_{r-i}d_0^{-1}\sum_{j=1}^{i-1}\partial_{i-j}\overline{\omega}_{p+j}-\sum_{i=1}^{r-1}d_{r-i}d_i\beta_p+\sum_{i=1}^{r-1}d_{r-i}\sum_{j=1}^{i-1}d_{i-j}\beta_{p+j}+d_0z_{p+r}
\end{align*}
\begin{remark}Using equalities \eqref{d0dr} and by re-labelling the sums, we get that the addends
\begin{align*}
    \sum_{i=1}^{r-1}d_{r-i}d_0\beta_{p+i}+\sum_{i=1}^{r-1}\sum_{j=1}^{i-1}d_{r-i}d_{i-j}\beta_{p+j}\ \text{ and }\sum_{i=1}^{r-1}d_{r-i}d_0^{-1}\sum_{j=1}^{i-1}\partial_{i-j}\overline{\omega}_{p+j}
\end{align*}
get simplified. Indeed
\begin{align*}
    \sum_{i=1}^{r-1}d_{r-i}&d_0\beta_{p+i}+\sum_{i=1}^{r-1}\sum_{j=1}^{i-1}d_{r-i}d_{i-j}\beta_{p+j}=d_1d_0\beta_{p+r-1}+\sum_{i=1}^{r-2}d_{r-i}d_0\beta_{p+i}+\sum_{j=1}^{r-2}\sum_{i=j+1}^{r-1}d_{r-i}d_{i-j}\beta_{p+j}\\ \underbrace{=}_{i-j=k}&d_1d_0\beta_{p+r-1}+\sum_{i=1}^{r-2}d_{r-i}d_0\beta_{p+i}+\sum_{j=1}^{r-2}\sum_{k=1}^{r-1-j}d_{r-k-j}d_k\beta_{p+j}\\=&d_1d_0\beta_{p+r-1}+\sum_{i=1}^{r-2}\left(d_{r-i}d_0+\sum_{k=1}^{r-i-1}d_{r-i-k}d_k\right)\beta_{p+i}=-d_0d_1\beta_{p+r-1}-\sum_{i=1}^{r-2}d_0d_{r-i}\beta_{p+i}\\=&-d_0\left(d_1\beta_{p+r-1}+\sum_{i=1}^{r-2}d_{r-i}\beta_{p+i}\right)=-d_0\sum_{i=1}^{r-1}d_{r-i}\beta_{p+i}
\end{align*}
and
\begin{align*}
    \sum_{i=1}^{r-1}d_{r-i}d_0^{-1}\sum_{j=1}^{i-1}\partial_{i-j}\overline{\omega}_{p+j}=&\sum_{j=1}^{r-2}\sum_{i=j+1}^{r-1}d_{r-i}d_0^{-1}\partial_{i-j}\overline{\omega}_{p+j}\underbrace{=}_{i-j=k}\sum_{j=1}^{r-2}\sum_{k=1}^{r-1-j}d_{r-(k+j)}d_0^{-1}\partial_k\overline{\omega}_{p+j}\\=&\sum_{i=1}^{r-2}\sum_{j=1}^{r-i-1}d_{(r-i)-j}d_0^{-1}\partial_j\overline{\omega}_{p+i}\,.
\end{align*}
    
\end{remark}
Using these simplifications, we get 
\begin{align*}
    d_r\overline{\alpha}-\sum_{i=1}^{r-1}d_{r-i}d_0^{-1}\partial_i\overline{\alpha}-\sum_{i=1}^{r-1}&d_{r-i}\overline{\omega}_{p+i}+\sum_{i=1}^{r-2}\sum_{j=1}^{r-i-1}d_{(r-i)-j}d_0^{-1}\partial_j\overline{\omega}_{p+i}=\\=&-d_0\sum_{i=1}^{r-1}d_{r-i}\beta_{p+i}-d_rd_0\beta_p-\sum_{i=1}^{r-1}d_{r-i}d_i\beta_p+d_0z_{p+r}\\
    \Leftrightarrow\ \partial_r\overline{\alpha}-\sum_{i=1}^{r-1}\partial_{r-i}\overline{\omega}_{p+i}=d_0&\left(d_r\beta_p-\sum_{i=1}^{r-1}d_{r-i}\beta_{p+i}+z_{p+r}\right)\,.
\end{align*}
The claim that $\partial_r\overline{\alpha}-\sum_{i=1}^{r-1}\partial_{r-i}\overline{\omega}_{p+i}\in\ker d_0$ follows from Lemma \ref{d0 of partialr} together with the inductive hypothesis, since
\begin{align*}
    d_0\Bigg( \partial_r\overline{\alpha}&-\sum_{i=1}^{r-1}\partial_{r-i}\overline{\omega}_{p+i} \Bigg) =-\sum_{j=1}^{r-1}d_j\left(\partial_{r-j}-d_0d_0^{-1}\partial_{r-j}\right)\overline{\alpha}+\sum_{i=1}^{r-1}\sum_{j=1}^{r-i-1}d_j\left(\partial_{r-i-j}-d_0d_0^{-1}\partial_{r-i-j}\right)\overline{\omega}_{p+i}\\=&-d_{r-1}\left(\partial_1-d_0d_0^{-1}\partial_1\right)\overline{\alpha}-\sum_{j=1}^{r-2}d_j\left(\partial_{r-j}-d_0d_0^{-1}\partial_{r-j}\right)\overline{\alpha}+\sum_{j=1}^{r-2}\sum_{i=1}^{r-j-1}d_j\left(\partial_{r-i-j}-d_0d_0^{-1}\partial_{r-i-j}\right)\overline{\omega}_{p+i}\\=&-d_{r-1}d_c^1\overline{\alpha}+\sum_{j=1}^{r-2}d_j\left(d_c^{r-j}\overline{\alpha}-\sum_{i=1}^{r-j-1}d_c^{(r-j)-i}\overline{\omega}_{p+i}\right)=0\,.
\end{align*}
Therefore,
\begin{align*}
    d_c^{r}\overline{\alpha}-\sum_{i=1}^{r-1}d_c^{r-i}\overline{\omega}_{p+i}+d_0d_0^{-1}\left(\partial_r\overline{\alpha}-\sum_{i=1}^{r-1}\partial_{r-i}\overline{\omega}_{p+i}\right)=d_0\left(d_r\beta_p-\sum_{i=1}^{r-1}d_{r-i}\beta_{p+i}+z_{p+r}\right)=d_0\omega_{p+r}
\end{align*}
if we impose $\omega_{p+r}=d_r\beta_p-\sum_{i=1}^{r-1}d_{r-i}\beta_{p+i}+z_{p+r}$.

To conclude, we have shown that 
\begin{itemize}
    \item $d_c^r\overline{\alpha}-\sum_{i=1}^{r-1}d_c^{r-i}\overline{\omega}_{p+i}=0$ by the Hodge decomposition \ref{eq hodge decomp};
    \item $\hat{\omega}_{p+r}=d_0^{-1}\left(\partial_r\overline{\alpha}-\sum_{i=1}^{r-1}\partial_{r-i}\overline{\omega}_{p+i}\right)$, and 
    \item $\check{\omega}_{p+r}=d_0\beta_{p+r}$ for some $\beta_{p+r}\in\C_{p+r,\bullet}$.
\end{itemize}
\end{proof}
\subsection{Expressing $B_r^{p,\bullet}$ in terms of Rumin forms and the Rumin differential}
The calculations in this case can be streamlined using the results of Proposition \ref{prop expression Z_r}.
\subsubsection*{r=1} Using Definition \ref{Z and B defined} for $r=1$, we have
\begin{align*}
    \alpha\in B_1^{p,\bullet}\ \Longleftrightarrow\ \exists\, c_p\in\C_{p,\bullet}\text{ such that }\alpha=d_0c_p\,.
\end{align*}
In other words, $\alpha$ belongs to $B_1^{p,\bullet}$ if it is an element in $\mathrm{Im}\,d_0$.
\subsubsection*{r=2} Using Definition \ref{Z and B defined} for $r=2$, we have
\begin{align*}
    \alpha\in B_2^{p,\bullet}\ \Longleftrightarrow&\ \exists\,c_{p-i}\in \C_{p-i,\bullet}\text{ with }i=0,1\text{ such that }\\
    &\ d_0c_{p-1}=0\text{ and }d_1c_{p-1}+d_0c_p=\alpha\,.
\end{align*}
In other words, we are saying $c_{p-1}\in Z_1^{p-1,\bullet}$ with $c_{p-1}=d_0\beta_{p-1}+\overline{c}_{p-1}$ for some $\beta_{p-1}\in\C_{p-1,\bullet}$. In addition,
\begin{align*}
    \alpha=d_1c_{p-1}+d_0c_{p}=d_1(d_0\beta_{p-1}+\overline{c}_{p-1})+d_0c_p=d_1\overline{c}_{p-1}-d_0d_1\beta_{p-1}+d_0c_{p}\,.
\end{align*}
Since $d_1\overline{c}_{p-1}\in\ker d_0$, we have
 $$\alpha=d_c^1\overline{c}_{p-1}+d_0d_0^{-1}d_1\overline{c}_{p-1}-d_0d_1\beta_{p-1}+d_0c_p\,.$$
 This in particular implies that $\alpha\in\ker\Box_0\oplus\mathrm{Im\,}d_0=\ker d_0$.
\subsubsection*{r=3}
Using Definition \ref{Z and B defined} for $r=3$, we have
\begin{align*}
    \alpha\in B_3^{p,\bullet}\ \Longleftrightarrow&\ \exists\,c_{p-i}\in\C_{p-i,\bullet}\text{ with }i=0,1,2\text{ such that}\\
    &\ d_0c_{p-2}=0\ ,\ d_1c_{p-2}+d_0c_{p-1}=0\text{ and }d_2c_{p-2}+d_1c_{p-1}+d_0c_p=\alpha\,.
\end{align*}
Again, the first equations imply that $c_{p-2}\in Z_{2}^{p-2,\bullet}$ with $c_{p-2}=d_0\beta_{p-2}+\overline{c}_{p-2}$ for some $\beta_{p-2}\in\C_{p-2,\bullet}$ and
\begin{align*}
    d_1(d_0\beta_{p-2}+\overline{c}_{p-2})+d_0c_{p-1}=0\ \Longleftrightarrow d_1\overline{c}_{p-2}=d_0d_1\beta_{p-2}-d_0c_{p-1}\,.
\end{align*}
By imposing $\omega_{p-1}=d_1\beta_{p-2}-c_{p-1}$ and repeating the same reasoning as before, we have
\begin{itemize}
    \item $\partial_1\overline{c}_{p-2}\in\ker d_0$;
    \item $d_c^1\overline{c}_{p-2}=0$;
    \item $\hat{\omega}_{p-1}=d_0^{-1}d_1\overline{c}_{p-2}$ and $\check{\omega}_{p-1}=d_0\beta_{p-1}$ for some $\beta_{p-1}\in\C_{p-1,\bullet}$.
\end{itemize}
Furthermore, the final equation reads
\begin{align*}
    \alpha=&d_2(d_0\beta_{p-2}+\overline{c}_{p-2})+d_1(d_1\beta_{p-2}-\omega_{p-1})+d_0c_p\\=&d_2d_0\beta_{p-2}+d_2\overline{c}_{p-2}+d_1^2\beta_{p-2}-d_1d_0\beta_{p-1}-d_1\overline{\omega}_{p-1}-d_1d_0^{-1}d_1\overline{c}_{p-2}+d_0c_p\\=&\partial_2\overline{c}_{p-2}-d_1\overline{\omega}_{p-1}-d_0d_2\beta_{p-2}+d_0d_1\beta_{p-1}+d_0c_p\\
    =&d_c^2\overline{c}_{p-2}-d_c^1\overline{\omega}_{p-1}+d_0d_0^{-1}\left(\partial_2\overline{c}_{p-2}-d_1\overline{\omega}_{p-1}\right)-d_0d_2\beta_{p-2}+d_0d_1\beta_{p-1}+d_0c_p\,.
\end{align*}
Notice that here we are using the fact that $\partial_2\overline{c}_{p-2}-\partial_1\overline{\omega}_{p-1}\in\ker d_0$, which also implies that $\alpha\in\ker\Box_0\oplus\mathrm{Im}\,d_0=\ker d_0$.

\begin{proposition}\label{prop expression B_r}Let us prove that for any $r\ge 2$, the condition $\alpha\in B_r^{p,\bullet}$ is equivalent to saying that there exists $c_{p-r+1}\in Z_{r-1}^{p-(r-1),\bullet}$, and so $c_{p-r+1}=d_0\beta_{p-r+1}+\overline{c}_{p-r+1}$ for some $\beta_{p-r+1}\in\C_{p-r+1,\bullet}$ as well as $\overline{\omega}_{p-r+i}\in\C_{p-r+i,\bullet}\cap\ker\Box_0$ with $i=2,\ldots,r-1$ such that
\begin{align*}
    d_c^i\overline{c}_{p-r+1}=\sum_{j=1}^{i-1}d_c^{i-j}\overline{\omega}_{p-r+1+j}\ \text{ for each }i=1,\ldots,r-2
\end{align*}
and
\begin{align*}
    \alpha=&d_c^{r-1}\overline{c}_{p-r+1}-\sum_{i=1}^{r-2}d_c^{r-1-i}\overline{\omega}_{p-r+1+i}+d_0d_0^{-1}\left(\partial_{r-1}\overline{c}_{p-r+1}-\sum_{i=1}^{r-2}\partial_{r-1-i}\overline{\omega}_{p-r+1+i}\right)+\\&-d_0d_{r-1}\beta_{p-r+1}+d_0\sum_{i=1}^{r-2}d_i\beta_{p-i}+d_0c_p\ \text{ for some }\beta_{p-i}\in\C_{p-i,\bullet} \text{ with }i=1,\ldots,r-2\,.
\end{align*}
    
\end{proposition}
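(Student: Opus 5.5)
The plan is to reduce the statement to Proposition \ref{prop expression Z_r}, applied to the element of lowest weight $c_{p-r+1}$, and then to rewrite the last defining equation of $B_r^{p,\bullet}$ with the same algebraic manipulations used in the proof of that proposition.

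\textbf{Step 1: recognise $c_{p-r+1}$ as an element of $Z_{r-1}$.} Set $q:=p-r+1$ and $\gamma:=c_{q}$. By Definition \ref{Z and B defined}, $\alpha\in B_r^{p,\bullet}$ means $\alpha=\sum_{k=0}^{r-1}d_kc_{p-k}$ and $\sum_{k=l}^{r-1}d_{k-l}c_{p-k}=0$ for $1\le l\le r-1$. Reindexing by $m=r-1-l$, the constraint with $l=r-1$ is $d_0\gamma=0$. For $1\le n\le r-2$, the constraint with $l=r-1-n$ reads
\begin{align*}
d_n\gamma=\sum_{i=0}^{n-1}d_i\left(-c_{q+n-i}\right).
\end{align*}
This is exactly the defining condition for $\gamma\in Z_{r-1}^{q,\bullet}$ with $z_{q+j}=-c_{q+j}$ for $j=1,\ldots,r-2$. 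Conversely, any $\gamma\in Z_{r-1}^{q,\bullet}$ together with witnesses $z_{q+j}$ gives back $c_{q+j}=-z_{q+j}$ satisfying all the constraints with $l\ge1$.

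\textbf{Step 2: invoke the $Z$-characterisation.} Proposition \ref{prop expression Z_r} with $r-1$ in place of $r$ yields $\gamma=d_0\beta_q+\overline{\gamma}$ and harmonic $\overline{\omega}_{q+j}$ satisfying $d_c^i\overline{\gamma}=\sum_{j=1}^{i-1} d_c^{i-j}\overline{\omega}_{q+j}$ for $i\le r-2$. Its proof also gives explicit formulas for the witnesses, for $i\le r-2$:
\begin{align*}
z_{q+i}=d_0\beta_{q+i}+\overline{\omega}_{q+i}+d_0^{-1}\Big(\partial_i\overline{\gamma}-\sum_{j=1}^{i-1}\partial_{i-j}\overline{\omega}_{q+j}\Big)-d_i\beta_q+\sum_{j=1}^{i-1}d_{i-j}\beta_{q+j}.
\end{align*}
These give the first displayed family of conditions in the statement.

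\textbf{Step 3: rewrite the last equation.} Substitute into the remaining equation $\alpha=d_{r-1}\gamma+\sum_{i=1}^{r-2}d_{r-1-i}c_{q+i}+d_0c_p$, with $c_{q+i}=-z_{q+i}$. This is the same computation as in the inductive step of Proposition \ref{prop expression Z_r}, with $r$ replaced by $r-1$ and the unknown $d_0z$ replaced by $\alpha-d_0c_p$. Using the reindexing simplifications from the Remark inside that proof, the terms in $\beta$ collapse to $d_0$-exact pieces by \eqref{d0dr}, and the $d_0^{-1}\partial$ terms assemble into $\partial_{r-1}$. One should obtain
\begin{align*}
\alpha=\partial_{r-1}\overline{\gamma}-\sum_{i=1}^{r-2}\partial_{r-1-i}\overline{\omega}_{q+i}-d_0d_{r-1}\beta_q+d_0\sum_{i=1}^{r-2}d_{r-1-i}\beta_{q+i}+d_0c_p.
\end{align*}
Relabelling $q+i=p-(r-1-i)$ turns the middle sum into $\sum d_i\beta_{p-i}$, as in the statement.

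By Lemma \ref{d0 of partialr} and the conditions of Step 2, the combination $\partial_{r-1}\overline{\gamma}-\sum_i\partial_{r-1-i}\overline{\omega}_{q+i}$ lies in $\ker d_0$; this uses the same computation that closes the proof of Proposition \ref{prop expression Z_r}. Its $\delta_0$-component therefore vanishes, and applying $\Pi_0+d_0d_0^{-1}$ to it gives $d_c^{r-1}\overline{\gamma}-\sum_i d_c^{r-1-i}\overline{\omega}_{q+i}+d_0d_0^{-1}(\cdots)$, which is the claimed formula. The converse direction follows by reading the same equalities backwards.

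\textbf{Main obstacle.} The delicate step is the sign and index bookkeeping in Step 3. The $c$'s enter with the opposite sign to the $z$'s, and the shift $q=p-r+1$ must be matched precisely with the $\beta_{p-i}$ labels in the statement. I would first check the identity against the explicit $r=3$ computation above before writing the general reindexing.
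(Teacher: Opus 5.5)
Your proposal is correct and follows essentially the same route as the paper. You identify $c_{p-r+1}\in Z_{r-1}^{p-r+1,\bullet}$ with witnesses $z=-c$, invoke Proposition \ref{prop expression Z_r} together with its explicit witness formulas, and substitute into the last defining equation so that the $\beta$-terms collapse via \eqref{d0dr} and the $\partial$'s assemble. You are also slightly more explicit than the paper in justifying $\partial_{r-1}\overline{c}_{p-r+1}-\sum_i\partial_{r-1-i}\overline{\omega}_{p-r+1+i}\in\ker d_0$, which is what allows the split into the $d_c$-part plus $d_0d_0^{-1}(\cdots)$, and in addressing the converse direction.
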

\begin{proof}
    By Definition \ref{Z and B defined}, if $\alpha\in B_r^{p,\bullet}$ then there exists $c_{p-r+1}\in\C_{p-r+1,\bullet}$ that belongs to $Z_{r-1}^{p-r+1,\bullet}$. So by what we proved in Proposition \ref{prop expression Z_r}, we have that $c_{p-r+1}=d_0\beta_{p-r+1}+\overline{c}_{p-r+1}$ for some $\beta_{p-r+1}\in\C_{p-r+1,\bullet}$. Moreover, there exist $\beta_{p-r+1+i}\in\C_{p-r+1+i,\bullet}$ and $\overline{\omega}_{p-r+1+i}\in\C_{p-r+1+i,\bullet}\cap\ker\Box_0$ for $i=1,\ldots,r-3$ such that
    \begin{align*}
        d_c^i\overline{c}_{p-r+1}=\sum_{j=1}^{i-1}d_c^{i-j}\overline{\omega}_{p-r+1+j}\ \text{ for each }i=1,\ldots,r-2\,.
    \end{align*}
    Notice that for $i= 1,\ldots,r-3$ (using the same notation used to define the $B_r^{p,\bullet}$ in Definition \ref{Z and B defined})
    \begin{itemize}
        \item $\omega_{p-r+1+i}=-c_{p-r+1+i}+d_i\beta_{p-r+1}-\sum_{j=1}^{i-1}d_{i-j}\beta_{p-r+1+j}$, where
        \item $d_0\beta_{p-r+1+i}=\check{\omega}_{p-r+1+i}$ and $\hat{\omega}_{p-r+1+i}=d_0^{-1}\left(\partial_i\overline{c}_{p-r+1}-\sum_{j=1}^{i-1}\partial_{i-j}\overline{\omega}_{p-r+1+j}\right)$ and
        \item $\partial_i\overline{c}_{p-r+1}-\sum_{j=1}^{i-1}\partial_{i-j}\overline{\omega}_{p-r+1+j}\in\ker d_0$.
    \end{itemize}
    We are left to study the final equation where
    \begin{align*}
        \alpha=\sum_{i=0}^{r-2}d_{r-1-i}c_{p-r+1+i}+d_0c_{p}\ \text{ for some }c_p\in\C_{p,\bullet}\,.
    \end{align*}
    Using the fact that $c_{p-r+1}\in Z_{r-1}^{p-r+1,\bullet}$ as stated above, we have
    \begin{align*}
        \sum_{i=0}^{r-2}d_{r-1-i}c_{p-r+1+i}=&d_{r-1}d_0\beta_{p-r+1}+d_{r-1}\overline{c}_{p-r+1}-\sum_{i=1}^{r-2}d_{r-1-i}\omega_{p-r+1+i}+\sum_{i=1}^{r-2}d_{r-1-i}d_i\beta_{p-r+1}+\\&-\sum_{i=1}^{r-2}d_{r-1-i}\sum_{j=1}^{i-1}d_{i-j}\beta_{p-r+1+j}\\=&d_{r-1}d_0\beta_{p-r+1}+d_{r-1}\overline{c}_{p-r+1}-\sum_{i=1}^{r-2}d_{r-1-i}d_0\beta_{p-r+1+i}-\sum_{i=1}^{r-2}d_{r-1-i}\overline{\omega}_{p-r+1+i}+\\&-\sum_{i=1}^{r-2}d_{r-1-i}d_0^{-1}\left(\partial_i\overline{c}_{p-r+1}-\sum_{j=1}^{i-1}\partial_{i-j}\overline{\omega}_{p-r+1+j}\right)+\\&+\sum_{i=1}^{r-2}d_{r-1-i}d_i\beta_{p-r+1}-\sum_{i=1}^{r-2}d_{r-1-i}\sum_{j=1}^{i-1}d_{i-j}\beta_{p-r+1+j}\\=&\partial_{r-1}\overline{c}_{p-r+1}-\sum_{i=1}^{r-2}\partial_{r-1-i}\overline{\omega}_{p-r+1+i}-d_0d_{r-1}\beta_{p-r+1}+d_0\sum_{i=1}^{r-2}d_{r-1-i}\beta_{p-r+1+i}\\=&\partial_{r-1}\overline{c}_{p-r+1}-\sum_{i=1}^{r-2}\partial_{r-1-i}\overline{\omega}_{p-r+1+i}-d_0d_{r-1}\beta_{p-r+1}+d_0\sum_{i=1}^{r-2}d_{i}\beta_{p-i}
    \end{align*}
    and hence we get the claim.
\end{proof}
\subsection{Expressing the differentials $\Delta_r$ of the spectral sequences in terms of Rumin forms and the Rumin differential}
Using the $Z_r^{p,\bullet}$ and $B_r^{p,\bullet}$ graded submodules, it is possible to have an explicit formulation of the differentials arising at each page of the spectral sequence.
\begin{proposition}[Theorem 2.10 in \cite{livernet2020spectral}]\label{theorem Delta_r}
    The $r^{th}$ differential of the spectral sequence corresponds to the map
    \begin{align*}
        \Delta_r\colon Z_r^{p,\bullet}/B_{r}^{p,\bullet}\longrightarrow Z_r^{p+r,\bullet}/B_r^{p+r,\bullet}\\ \Delta_r\left([x]\right)=\left[d_rx-\sum_{i=1}^{r-1}d_iz_{p+r-i}\right]
    \end{align*}
    where $x\in Z_r^{p,\bullet}$ and the family $\lbrace z_{p+j}\rbrace_{1\le j\le r-1}$ satisfies the equations of Definition \ref{Z and B defined}
\end{proposition}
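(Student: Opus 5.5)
The plan is to prove the formula directly from the definitions of $Z_r$ and $B_r$ (Definition \ref{Z and B defined}), following Livernet--Whitehouse--Ziegenhagen rather than building the spectral sequence from the filtration. The proof has four steps, and the third is the real work.

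First, I check that $y:=d_rx-\sum_{i=1}^{r-1}d_iz_{p+r-i}$ lies in $Z_r^{p+r,\bullet}$. Write $z_p:=-x$ and set $y$ equal to $-\sum_{i=0}^{r}d_i z_{p+r-i}$ with the convention $z_{p+r}:=0$. Applying $d_0$ and using the relations \eqref{diff relations}, $d_0d_k=-\sum_{i=1}^{k}d_id_{k-i}$, together with the defining equations $\sum_{i=0}^{n}d_iz_{p+n-i}=0$ for $n\le r-1$, a telescoping computation gives $d_0y=0$. For the higher conditions I need witnesses $z'_{p+r+j}$ with $d_ny=\sum_{i<n}d_iz'_{p+r+n-i}$. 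The natural choice is $z'_{p+r+j}:=-\sum_{i=0}^{r+j}d_i z_{p+r+j-i}$, suitably shifted. These witnesses need $z_{p+k}$ for $k\ge r$, so I take those to be $0$, and then the required identities again follow from the relations $\sum_{i+j=n}d_id_j=0$.

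Second, I show independence of choices and that $B_r$ maps into $B_r$:
\begin{itemize}
\item \emph{Changing the $z$'s.} If $\{z_{p+j}\}$ and $\{z'_{p+j}\}$ both witness $x\in Z_r$, their difference $w_{p+j}=z_{p+j}-z'_{p+j}$ satisfies $\sum_{i=0}^{n-1}d_iw_{p+n-i}=0$ for $n\le r-1$. The change in $y$ is $-\sum_{i=1}^{r-1}d_iw_{p+r-i}$. Setting $c_{p+r-k}:=-w_{p+r-k}$ for $k=1,\dots,r-1$ and $c_{p+r}:=0$ exhibits this change as an element of $B_r^{p+r,\bullet}$, because the side conditions $\sum_{k\ge l}d_{k-l}c_{p+r-k}=0$ are exactly the equations satisfied by the $w$'s.
\item \emph{Changing the representative.} If $x\in B_r^{p,\bullet}$, with witnesses $c_{p-k}$, I construct $z$'s so that $y$ lands in $B_r^{p+r,\bullet}$. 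The chain $(c_{p-r+1},\dots,c_{p},z_{p+1},\dots)$ forms one long ``zigzag'', so $y$ can be written as $\sum d_k c'_{p+r-k}$ with the required vanishing conditions.
\end{itemize}
The $B_r$ step needs the index bookkeeping for $B_r$, which involves $2r-1$ terms and not just $r$.

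Third, I identify the map with the spectral sequence differential, and I expect this to be the main obstacle. The spectral sequence of the filtration $F^p\mathrm{Tot}\,\C=\prod_{a\ge p}\C_{a,\bullet}$ has $E_r^p=Z_r/(B_r)$ in the classical form
\[
\{u\in F^p: du\in F^{p+r}\}\big/\bigl(F^{p+1}\cap\cdots+dF^{p-r+1}\cap F^p\bigr),
\]
with $\Delta_r[u]=[du]$. The plan is to build an explicit isomorphism between the classical quotient and $Z_r^{p,\bullet}/B_r^{p,\bullet}$. Given $x\in Z_r$ with witnesses $z$, the element $u=x-z_{p+1}-\dots-z_{p+r-1}$ satisfies $du\in F^{p+r}$, and the weight-$(p+r)$ component of $du$ equals $y$. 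Conversely, the weight-$p$ component of $u$ recovers $x$. The work is to check that the two notions of boundaries correspond under this assignment, again by comparing weight components of $dc$ for $c\in F^{p-r+1}$. Once that is done, $[du]$ projected to weight $p+r$ is exactly the claimed formula.

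Since the statement is cited as Theorem 2.10 of \cite{livernet2020spectral}, the fourth step is to note that the sign and bidegree changes of Remark \ref{remark 1.2} only reindex $(a,b)\mapsto(-a,\cdot)$ and leave the relations unchanged. The proof there therefore transfers verbatim.
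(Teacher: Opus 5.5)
Your proposal is correct, and it does more than the paper. The paper gives no argument for this statement: it imports Theorem 2.10 of \cite{livernet2020spectral} after reindexing to the cohomological, weight-increasing convention. Your Step 3 is exactly the content of that theorem, and it works as you outline:
for $u\in F^p$ with $du\in F^{p+r}$, the leading component $u_p$ lies in $Z_r^{p,\bullet}$ with witnesses $z_{p+j}=-u_{p+j}$;
every $x\in Z_r^{p,\bullet}$ arises from the lift $u=x-\sum_{j=1}^{r-1}z_{p+j}$;
the kernel of $u\mapsto u_p$ is $F^{p+1}\cap d^{-1}F^{p+r}$;
and since $(dc)_{p-l}$ for $0\le l\le r-1$ involves only $c_{p-r+1},\dots,c_p$, the leading components of $d(F^{p-r+1})\cap F^p$ are precisely $B_r^{p,\bullet}$.
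Moreover, if $u_p=(dc)_p$ then $u-dc\in F^{p+1}\cap d^{-1}F^{p+r}$, so the preimage of $B_r^{p,\bullet}$ is the classical group of boundaries. Hence $E_r^{p,\bullet}\cong Z_r^{p,\bullet}/B_r^{p,\bullet}$, and $[du]$ corresponds to $[(du)_{p+r}]$, which is the stated formula.

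Your route gives a self-contained proof in the paper's own conventions, so Step 4, whose only role is to justify the translation, becomes unnecessary. It also shows that $\Delta_r[x]$ is just the weight-$(p+r)$ part of $du$, which is the mechanism the paper later uses in Proposition \ref{main prop}. The citation buys brevity but leaves the reindexing to the reader.

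Two small points. First, once Step 3 is in place, Steps 1 and 2 follow by transport of structure: two choices of witnesses give lifts differing by an element of $F^{p+1}\cap d^{-1}F^{p+r}$, so you can drop them. Second, if you keep them, two fixes are needed:
in Step 1 the witnesses have the wrong sign: with $U=\sum_{k=0}^{r-1}z_{p+k}$ and $z_p=-x$ one has $y=-(dU)_{p+r}$ and $z'_{p+r+j}=+(dU)_{p+r+j}$;
in Step 2, replace the ``zigzag'' by the explicit choice $z_{p+i}=-(dC)_{p+i}$ with $C=\sum_{k=0}^{r-1}c_{p-k}$, which gives $y=-d_0(dC)_{p+r}\in B_1^{p+r,\bullet}$, as in the proof of Proposition \ref{main prop}.
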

Before proving the claim in full generality, let us first see a few cases explicitly. To do so, we will follow what we have already shown in the previous subsections.
\subsubsection*{r=1} In this case, we have $\alpha\in Z_1^{p,\bullet}$, that is $\alpha=d_0\beta_p+\overline{\alpha}$ for some $\beta_p\in\C_{p,\bullet}$. The action of $\Delta_1$ then takes the form
\begin{align*}
    \Delta_1\alpha=&d_1\alpha=d_1d_0\beta_p+d_1\overline{\alpha}=d_c^1\overline{\alpha}+d_0d_0^{-1}d_1\overline{\alpha}-d_0d_1\beta_p=d_c^1\overline{\alpha}+d_0\left(d_0^{-1}d_1\overline{\alpha}-d_1\beta_p\right)\,.
\end{align*}
Since an element is in $B_1^{p+1,\bullet}$ if it belongs to the $\mathrm{Im}\,d_0$, then
\begin{align*}
    \left[d_c^1\overline{\alpha}+d_0\left(d_0^{-1}d_1\overline{\alpha}-d_1\beta_p\right)\right]=\left[d_c^1\overline{\alpha}\right]\,.
\end{align*}
\subsubsection*{r=2} In this case, we have $\alpha\in Z_2^{p,\bullet}$ if there exist $\beta_{p}\in\C_{p,\bullet}$ and $z_{p+1}\in\C_{p+1,\bullet}$ such that
\begin{align*}
    \alpha=d_0\beta_p+\overline{\alpha}\ \text{ and }\ d_1\alpha-d_0z_{p+1}=0\,.
\end{align*}
By what we have seen previously, if we impose $\omega_{p+1}=z_{p+1}+d_1\beta_p$, then
\begin{itemize}
    \item $d_c^1\overline{\alpha}=0$;
    \item $\check{\omega}_{p+1}=d_0\beta_{p+1}$ for some $\beta_{p+1}\in\C_{p+1,\bullet}$, and
    \item $\hat{\omega}_{p+1}=d_0^{-1}d_1\overline{\alpha}$
\end{itemize}
so that
\begin{align*}
    d_2\alpha-d_1z_{p+1}=&d_2d_0\beta_p+d_2\overline{\alpha}-d_1d_0\beta_{p+1}-d_1\overline{\omega}_{p+1}-d_1d_0^{-1}d_1\overline{\alpha}+d_1^2\beta_p\\=&\partial_2\overline{\alpha}-d_1\overline{\omega}_{p+1}+d_0d_1\beta_{p+1}-d_0d_2\beta_p\\=&d_c^2\overline{\alpha}-d_c^1\overline{\omega}_{p+1}+d_0d_0^{-1}\left(\partial_2\overline{\alpha}-d_1\overline{\omega}_{p+1}\right)+d_0d_1\beta_{p+1}-d_0d_2\beta_p\,.
\end{align*}
Notice that an element $x\in B_2^{p+2,\bullet}$, if there exists $c_{p+1}\in Z_{1}^{p+1,\bullet}$ with $c_{p+1}=d_0\beta_{p+1}+\overline{c}_{p+1}$ for some $\beta_{p+1}\in\C_{p+1,\bullet}$ and
\begin{align*}
    x=d_c^1\overline{c}_{p+1}+d_0d_0^{-1}d_1\overline{c}_{p+1}-d_0d_1\beta_{p+1}+d_0c_{p+2}\ \text{ for some }c_{p+2}\in\C_{p+2,\bullet}\,.
\end{align*}
Therefore, if we choose
\begin{itemize}
    \item $c_{p+1}=-d_0\beta_{p+1}-\overline{\omega}_{p+1}$, and
    \item $c_{p+2}=d_0^{-1}\partial_2\overline{\alpha}-d_2\beta_p$
\end{itemize}
we have
\begin{align*}
    \Delta_2\left(\left[\alpha\right]\right)=\left[d_c^2\overline{\alpha}-\left(d_c^1\overline{\omega}_{p+1}+d_0d_0^{-1}d_1\overline{\omega}_{p+1}-d_0d_1\beta_{p+1}\right)+d_0\left(d_0^{-1}\partial_2\overline{\alpha}-d_2\beta_p\right)\right]=\left[d_c^2\overline{\alpha}\right]\,.
\end{align*}
\subsubsection*{r=3} In this case, we have $\alpha\in Z_3^{p,\bullet}$ if there exist $\beta_p\in\C_{p,\bullet}$ and $z_{p+i}\in\C_{p+i,\bullet}$ for $i=1,2$ such that
\begin{align*}
    \alpha=d_0\beta_p+\overline{\alpha}\ ,\ d_1\alpha-d_0z_{p+1}=0\ ,\ d_2\alpha-d_1z_{p+1}-d_0z_{p+2}=0\,.
\end{align*}
If we impose $\omega_{p+1}=z_{p+1}+d_1\beta_p$ and $\omega_{p+2}=d_2\beta_p-d_1\beta_{p+1}+z_{p+2}$, we have
\begin{itemize}
    \item $d_c^1\overline{\alpha}=d_c^2\overline{\alpha}-d_c^1\overline{\omega}_{p+1}=0$,
    \item $\check{\omega}_{p+1}=d_0\beta_{p+1}$ and $\check{\omega}_{p+2}=d_0\beta_{p+2}$ for some $\beta_{p+i}\in\C_{p+i,\bullet}$ with $i=1,2$;
    \item $\hat{\omega}_{p+1}=d_0^{-1}d_1\overline{\alpha}$ and $\hat{\omega}_{p+2}=d_0^{-1}\left(\partial_2\overline{\alpha}-d_1\overline{\omega}_{p+1}\right)$
\end{itemize}
we get
\begin{align*}
    d_3\alpha-d_2z_{p+1}-d_1z_{p+2}=&d_3d_0\beta_p+d_3\overline{\alpha}-d_2d_0\beta_{p+1}-d_2\overline{\omega}_{p+1}-d_2d_0^{-1}d_1\overline{\alpha}+d_2d_1\beta_p+\\&-d_1d_0\beta_{p+2}-d_1\overline{\omega}_{p+2}-d_1d_0^{-1}\left(\partial_2\overline{\alpha}-d_1\overline{\omega}_{p+1}\right)+d_1d_2\beta_p-d_1^2\beta_{p+1}\\=&\partial_3\overline{\alpha}-\partial_2\overline{\omega}_{p+1}-d_1\overline{\omega}_{p+2}-d_0d_3\beta_p+d_0d_2\beta_{p+1}+d_0d_1\beta_{p+2}\\=&d_c^3\overline{\alpha}-d_c^2\overline{\omega}_{p+1}-d_c^1\overline{\omega}_{p+2}+d_0d_0^{-1}\left(\partial_3\overline{\alpha}-\partial_2\overline{\omega}_{p+1}-d_1\overline{\omega}_{p+2}\right)+\\&-d_0d_3\beta_p+d_0d_2\beta_{p+1}+d_0d_1\beta_{p+2}\,.
\end{align*}
Notice that an element $x\in B_3^{p+3,\bullet}$ if there exist $c_{p+1}\in Z_2^{p+1,\bullet}$ with $c_{p+1}=d_0\beta_{p+1}+\overline{c}_{p+1}$ for some $\beta_{p+1}\in\C_{p+1,\bullet}$ such that $d_c^1\overline{c}_{p+1}=0$ and
\begin{align*}
    x=d_c^2\overline{c}_{p+1}-d_c^1\overline{\omega}_{p+2}+d_0d_0^{-1}\left(\partial_2\overline{c}_{p+1}-d_1\overline{\omega}_{p+2}\right)-d_0d_2\beta_{p+1}+d_0d_1\beta_{p+2}+d_0c_{p+3}
\end{align*}
for some $\omega_{p+2}\in\C_{p+2,\bullet}$ such that $\check{\omega}_{p+2}=d_0\beta_{p+2}$ and some $c_{p+3}\in\C_{p+3,\bullet}$.
Therefore, if we choose
\begin{itemize}
    \item $c_{p+1}=-d_0\beta_{p+1}$, i.e. we are taking $\overline{c}_{p+1}=0$,
    \item $c_{p+2}=-d_1\beta_{p+1}+d_0\beta_{p+2}+\overline{\omega}_{p+2}$, and
    \item $c_{p+3}=d_0^{-1}\left(\partial_3\overline{\alpha}-\partial_2\overline{\omega}_{p+1}\right)-d_3\beta_p$
\end{itemize}
we have
\begin{align*}
    \Delta_3\left([\alpha]\right)=&\big[d_c^3\overline{\alpha}-d_c^2\overline{\omega}_{p+1}-d_c^1\overline{\omega}_{p+2}-d_0d_0^{-1}\left(\partial_2\overline{\omega}_{p+1}+d_1\overline{\omega}_{p+2}\right)+d_0d_2\beta_{p+1}+d_0d_1\beta_{p+2}\\&+d_0d_0^{-1}\partial_3\overline{\alpha}-d_0d_3\beta_p\big]=\left[d_c^3\overline{\alpha}-d_c^2\overline{\omega}_{p+1}\right]\,.
\end{align*}
Most importantly, in general, we are not able to take $c_{p+1}=-d_0\beta_{p+1}-\overline{\omega}_{p+1}$ and $c_{p+3}=d_0^{-1}\partial_3\overline{\alpha}-d_3\beta_p$, even though this would give us
\begin{align*}
    x=-d_c^2\overline{\omega}_{p+1}-d_c^1\overline{\omega}_{p+2}+d_0d_0^{-1}\left(\partial_3\overline{\alpha}-\partial_2\overline{\omega}_{p+1}-d_1\overline{\omega}_{p+2}\right)+d_0\left(d_2\beta_{p+1}+d_1\beta_{p+2}-d_3\beta_p\right)
\end{align*}
and further simplify the expression of $\Delta_3$. This is because in general $d_c^1\overline{\omega}_{p+1}$ will not vanish. Actually, the condition $d_c^1\overline{\omega}_{p+1}=0$ will be satisfied only if $d_c^2\overline{\alpha}=0$, since $d_c^2\overline{\alpha}-d_c^1\overline{\omega}_{p+1}=0$.
\subsubsection*{r=4} In this case, we have $\alpha\in Z_4^{p,\bullet}$ if there exist $\beta_p\in\C_{p,\bullet}$ and $z_{p+i}\in\C_{p+i,\bullet}$ for $i=1,2,3$ such that
\begin{align*}
    \alpha=d_0\beta_p+\overline{\alpha}\,,\, d_1\alpha+d_0z_{p+1}=d_2\alpha+d_1z_{p+1}+d_2z_{p+2}=d_3\alpha+d_2z_{p+1}+d_1z_{p+2}+d_0z_{p+3}=0\,.
\end{align*}
If we impose $\omega_{p+1}=z_{p+1}+d_1\beta_p$,  $\omega_{p+2}=z_{p+2}+d_2\beta_p-d_1\beta_{p+1}$, and $\omega_{p+3}=z_{p+3}+d_3\beta_p-d_2\beta_{p+1}-d_1\beta_{p+2}$, we have
\begin{itemize}
    \item $d_c^1\overline{\alpha}=d_c^2\overline{\alpha}-d_c^1\overline{\omega}_{p+1}=d_c^3\overline{\alpha}-d_c^2\overline{\omega}_{p+1}-d_c^1\overline{\omega}_{p+2}=0$;
    \item $\check{\omega}_{p+1}=d_0\beta_{p+1}$, $\check{\omega}_{p+2}=d_0\beta_{p+2}$, and $\check{\omega}_{p+3}=d_0\beta_{p+3}$ for some $\beta_{p+i}\in\C_{p+i,\bullet}$ with $i=1,2,3$;
    \item $\hat{\omega}_{p+1}=d_0^{-1}d_1\overline{\alpha}$, $\hat{\omega}_{p+2}=d_0^{-1}\left(\partial_2\overline{\alpha}-d_1\overline{\omega}_{p+1}\right)$, and $\hat{\omega}_{p+3}=d_0^{-1}\left(\partial_3\overline{\alpha}-\partial_2\overline{\omega}_{p+1}-d_1\overline{\omega}_{p+2}\right)$
\end{itemize}
and we get
\begin{align*}
    d_4\alpha-d_3z_{p+1}-d_2z_{p+2}-&d_1z_{p+3}=d_4(d_0\beta_p+\overline{\alpha})-d_3\left(d_0\beta_{p+1}+\overline{\omega}_{p+1}+d_0^{-1}d_1\overline{\alpha}-d_1\beta_p\right)+\\&-d_2\left[d_0\beta_{p+2}+\overline{\omega}_{p+2}+d_0^{-1}\left(\partial_2\overline{\alpha}-d_1\overline{\omega}_{p+1}\right)-d_2\beta_p+d_1\beta_{p+1}\right]+\\&-d_1\left[d_0\beta_{p+3}+\overline{\omega}_{p+3}+d_0^{-1}\left(\partial_3\overline{\alpha}-\partial_2\overline{\omega}_{p+1}-\partial_1\overline{\omega}_{p+2}\right)-d_3\beta_{p}+d_2\beta_{p+1}+d_1\beta_{p+2}\right]\\=&\partial_4\overline{\alpha}-\partial_3\overline{\omega}_{p+1}-\partial_2\overline{\omega}_{p+2}-d_1\overline{\omega}_{p+3}-d_0d_4\beta_p+d_0d_3\beta_{p+1}+d_0d_2\beta_{p+2}+d_0d_1\beta_{p+3}\\=&d_c^4\overline{\alpha}-d_c^3\overline{\omega}_{p+1}-d_c^2\overline{\omega}_{p+2}-d_c^1\overline{\omega}_{p+3}+d_0d_0^{-1}\left(\partial_4\overline{\alpha}-\partial_3\overline{\omega}_{p+1}-\partial_2\overline{\omega}_{p+2}-d_1\overline{\omega}_{p+3}\right)+\\&-d_0d_4\beta_p+d_0d_3\beta_{p+1}+d_0d_2\beta_{p+2}+d_0d_1\beta_{p+3}\,.
\end{align*}
On the other hand, an element $x\in B_4^{p+4,\bullet}$ if there exist $c_{p+1}\in Z_{3}^{p+1,\bullet}$ with $c_{p+1}=d_0\beta_{p+1}+\overline{c}_{p+1}$ for some $\beta_{p+1}\in\C_{p+1,\bullet}$, as well as $\overline{\omega}_{p+2}\in\ker\Box_0\cap\C_{p+2,\bullet}$ such that $d_c^1\overline{c}_{p+1}=d_c^2\overline{c}_{p+1}-d_c^1\overline{\omega}_{p+2}=0$, and
\begin{align*}
    x=d_c^3\overline{c}_{p+1}+d_0d_0^{-1}\partial_3\overline{c}_{p+1}-d_0d_3\beta_{p+1}-\sum_{i=1}^2\left(d_c^i\overline{\omega}_{p+4-i}+d_0d_0^{-1}\partial_i\overline{\omega}_{p+4-i}+d_0d_i\beta_{p+4-i}\right)+d_0c_{p+4}
\end{align*}
for some $\omega_{p+3}\in\C_{p+3,\bullet}$ and $\check{\omega}_{p+i}=d_0\beta_{p+i}$ for $i=2,3$. Therefore, if we choose
\begin{itemize}
    \item $c_{p+1}=-d_0\beta_{p+1}$, i.e. $\overline{c}_{p+1}=0$,
    \item $c_{p+2}=-d_1\beta_{p+1}+d_0\beta_{p+2}$, i.e. $\overline{\omega}_{p+2}=0$,
    \item $c_{p+3}=-d_2\beta_{p+1}+d_1\beta_{p+2}+d_0\beta_{p+3}+\overline{\omega}_{p+3}$, and
    \item $c_{p+4}=d_0^{-1}\left(\partial_4\overline{\alpha}-\partial_3\overline{\omega}_{p+1}-\partial_2\overline{\omega}_{p+2}\right)-d_4\beta_p$
\end{itemize}
we have
\begin{align*}
    \Delta_4\left(\left[\overline{\alpha}\right]\right)=&[d_c^4\overline{\alpha}-d_c^3\overline{\omega}_{p+1}-d_c^2\overline{\omega}_{p+2}-d_c^1\overline{\omega}_{p+3}+d_0d_0^{-1}\left(\partial_4\overline{\alpha}-\partial_3\overline{\omega}_{p+1}-\partial_2\overline{\omega}_{p+2}-\partial_1\overline{\omega}_{p+3}\right)+\\&-d_0d_4\beta_p+d_0\left(d_3\beta_{p+1}+d_2\beta_{p+2}+d_1\beta_{p+3}\right)]=\left[d_c^4\overline{\alpha}-d_c^3\overline{\omega}_{p+1}-d_c^2\overline{\omega}_{p+2}\right]\,.
\end{align*}
Again, just like in the previous case, unless we have $d_c^1\overline{\omega}_{p+1}=d_c^2\overline{\omega}_{p+1}+d_c^1\overline{\omega}_{p+2}=0$, the expression of the differential $\Delta_4([\overline{\alpha}])$ does not simplify to $[d_c^4\overline{\alpha}]$. Notice that this would also imply that $d_c^2\overline{\alpha}=d_c^3\overline{\alpha}=0$.

\begin{proposition}\label{Delta_r with Rumin}
    Let us prove that for any $r\ge 1$, given an arbitrary $\alpha\in Z_{r}^{p,\bullet}$ the action of the $r^{th}$ page of the spectral sequence is given by
    \begin{align}
        \Delta_r\left([\alpha]\right)=\left[d_c^r\overline{\alpha}-\sum_{i=2}^{r-1}d_c^{i}\overline{\omega}_{p+r-i}\right]\,,
    \end{align}
    where $\overline{\omega}_{p+r-i}\in\ker\Box_0\cap\C_{p+r-i,\bullet}$ such that $d_c^j\overline{\alpha}-\sum_{i=1}^{j-1}d_c^i\overline{\omega}_{p+j-i}=0$ for $j=1,\ldots,r-1$.
\end{proposition}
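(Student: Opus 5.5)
We follow the pattern of the explicit cases $r=1,\dots,4$ and combine Proposition \ref{theorem Delta_r} with Propositions \ref{prop expression Z_r} and \ref{prop expression B_r}. Cases $r=1,2$ are already done, so we assume $r\ge 3$. Take $\alpha\in Z_r^{p,\bullet}$ with witnesses $z_{p+1},\dots,z_{p+r-1}$. By the proof of Proposition \ref{prop expression Z_r}, for each $i\le r-1$ we can write
\begin{align*}
z_{p+i}=d_0\beta_{p+i}+\overline{\omega}_{p+i}+d_0^{-1}\Big(\partial_i\overline{\alpha}-\sum_{j=1}^{i-1}\partial_{i-j}\overline{\omega}_{p+j}\Big)-d_i\beta_p+\sum_{j=1}^{i-1}d_{i-j}\beta_{p+j},
\end{align*}
with $\alpha=d_0\beta_p+\overline{\alpha}$ and the relations $d_c^j\overline{\alpha}=\sum_{i=1}^{j-1}d_c^{i}\overline{\omega}_{p+j-i}$ for $j\le r-1$.

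\emph{Step 1.} Substitute these expressions into $d_r\alpha-\sum_{i=1}^{r-1}d_iz_{p+r-i}$ and simplify. This is exactly the computation of the inductive step of Proposition \ref{prop expression Z_r}: we use the reindexing identities of the Remark there together with \eqref{d0dr}, but now without imposing that the total equals $d_0z_{p+r}$. The result should be
\begin{align*}
d_r\alpha-\sum_{i=1}^{r-1}d_iz_{p+r-i}=\partial_r\overline{\alpha}-\sum_{i=1}^{r-1}\partial_{r-i}\overline{\omega}_{p+i}-d_0d_r\beta_p+d_0\sum_{i=1}^{r-1}d_{r-i}\beta_{p+i}.
\end{align*}
By Lemma \ref{d0 of partialr} and the relations above, the $\partial$-part lies in $\ker d_0$. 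By the Hodge decomposition \eqref{eq hodge decomp} it therefore splits as
\begin{align*}
d_c^r\overline{\alpha}-\sum_{i=1}^{r-1}d_c^{r-i}\overline{\omega}_{p+i}+d_0d_0^{-1}\Big(\partial_r\overline{\alpha}-\sum_{i=1}^{r-1}\partial_{r-i}\overline{\omega}_{p+i}\Big).
\end{align*}

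\emph{Step 2.} We now show that the unwanted part
\begin{align*}
X:=-d_c^1\overline{\omega}_{p+r-1}+d_0(\cdots)
\end{align*}
lies in $B_r^{p+r,\bullet}$. Here $d_0(\cdots)$ collects all the $d_0$-exact terms, except the piece $d_0d_0^{-1}\partial_1\overline{\omega}_{p+r-1}$ and the piece $d_0d_1\beta_{p+r-1}$, which are absorbed as described next. This is the term $i=r-1$ in the sum $\sum_{i=1}^{r-1} d_c^{r-i}\overline{\omega}_{p+i}$; the terms $i=1,\dots,r-2$ are exactly $\sum_{i=2}^{r-1}d_c^i\overline{\omega}_{p+r-i}$, which remain in the claimed formula. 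As in the cases $r=3,4$, choose the $B_r$-witnesses as follows:
\begin{align*}
c_{p+1}=-d_0\beta_{p+1},\qquad c_{p+k}=-\sum_{j=1}^{k-1}d_{k-j}\beta_{p+j}+d_0\beta_{p+k}\ \text{ for } 2\le k\le r-2,
\end{align*}
so that $\overline{c}_{p+1}=0$ and all intermediate harmonic parts vanish. Then set
\begin{align*}
c_{p+r-1}=-\sum_{j=1}^{r-2}d_{r-1-j}\beta_{p+j}+d_0\beta_{p+r-1}+\overline{\omega}_{p+r-1}.
\end{align*}
Finally, take $c_{p+r}$ to be $d_0^{-1}$ applied to the remaining $\partial$-terms, minus $d_r\beta_p$.

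\emph{Step 3.} Verify the defining equations of $B_r^{p+r,\bullet}$. The conditions $0=\sum_{k=l}^{r-1}d_{k-l}c_{p+r-k}$ for $1\le l\le r-1$ reduce, via \eqref{diff relations}, to telescoping cancellations of the $d_jd_k\beta$ terms, since the chosen $c$'s are precisely ``$d$-coboundaries'' of the $\beta$'s. The top equation $\sum_k d_kc_{p+r-k}$ reproduces $X$, using $d_1\overline{\omega}_{p+r-1}=d_c^1\overline{\omega}_{p+r-1}+d_0d_0^{-1}d_1\overline{\omega}_{p+r-1}$ (valid by Lemma \ref{d0 of partialr}). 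Hence the class of $d_r\alpha-\sum_i d_iz_{p+r-i}$ equals $\big[d_c^r\overline{\alpha}-\sum_{i=2}^{r-1}d_c^i\overline{\omega}_{p+r-i}\big]$.

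The main obstacle is Step 3: checking, for general $r$, that the lower constraint equations in the definition of $B_r$ hold for this choice of $c$'s. The cleanest route is to note that $c_\bullet=-(d-d_0)$-type combinations of $\beta$'s make the partial total sums $(d\,c)$ vanish in weights $<p+r$, because $d^2=0$ applied to $\sum_j\beta_{p+j}$ in truncated weights. I would first try proving this truncation identity, namely
\begin{align*}
\sum_{k=l}^{r-1}d_{k-l}\Big(\sum_j d_{\cdot}\beta\Big)=0,
\end{align*}
by induction on $l$ using \eqref{diff relations}.
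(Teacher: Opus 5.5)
Your route is the paper's. Step 1 is exactly its computation of $d_r\alpha-\sum_i d_{r-i}z_{p+i}$, and, as in the paper, everything reduces to showing that the unwanted part $X$ lies in $B_r^{p+r,\bullet}$. Here $X$ is the difference between that expression and the claimed representative, namely $-d_c^1\overline{\omega}_{p+r-1}$ plus $d_0$-exact terms. However, the step you left open fails as written for $r\ge 4$. Your intermediate witnesses $c_{p+k}=-\sum_{j=1}^{k-1}d_{k-j}\beta_{p+j}+d_0\beta_{p+k}$ are not the weight components of $d\gamma$ for a single $\gamma$: the coefficient of $\beta_{p+2}$ is $+1$ in $c_{p+2}$ but $-1$ in $c_{p+3}$. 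So the $d^2=0$ mechanism you describe does not apply to them. Concretely, the constraint of Definition \ref{Z and B defined} with $l=r-3$ reads
\begin{align*}
d_2c_{p+1}+d_1c_{p+2}+d_0c_{p+3}=-\left(d_2d_0+d_1^2+d_0d_2\right)\beta_{p+1}+\left(d_1d_0-d_0d_1\right)\beta_{p+2}=2\,d_1d_0\beta_{p+2}\,,
\end{align*}
which need not vanish. (When $r=4$, the extra $\overline{\omega}_{p+3}$ in $c_{p+3}$ is killed by $d_0$.) This also departs from the paper's own $r=4$ computation, which you cite as your model; there $c_{p+3}=-d_2\beta_{p+1}+d_1\beta_{p+2}+d_0\beta_{p+3}+\overline{\omega}_{p+3}$. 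In general the paper takes $c_{p+i}=-d_{i-1}\beta_{p+1}+\sum_{j=0}^{i-2}d_j\beta_{p+i-j}$. This is $(d\gamma)_{p+i}$ for $\gamma=-\beta_{p+1}+\beta_{p+2}+\cdots+\beta_{p+r-1}$, so the lower constraints are components of $d^2\gamma=0$. For $r=3$ your choice agrees with the paper's and works.

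A cleaner repair avoids the $\beta$'s altogether. Since $\overline{\omega}_{p+r-1}\in\ker\Box_0\subseteq\ker d_0$, we have $d_0d_1\overline{\omega}_{p+r-1}=-d_1d_0\overline{\omega}_{p+r-1}=0$. Hence $d_c^1\overline{\omega}_{p+r-1}=d_1\overline{\omega}_{p+r-1}-d_0d_0^{-1}d_1\overline{\omega}_{p+r-1}$, and your unwanted part is $X=d_1\left(-\overline{\omega}_{p+r-1}\right)+d_0y$ for some $y\in\C_{p+r,\bullet}$. Take witnesses $c_{p+r-1}=-\overline{\omega}_{p+r-1}$, which is $d_0$-closed, and $c_{p+r}=y$. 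Then $X\in B_2^{p+r,\bullet}$, hence $X\in B_r^{p+r,\bullet}$ by Lemma \ref{lemma inclusions}. Equivalently, in the $B_r$ description take $c_{p+1}=\cdots=c_{p+r-2}=0$, so all lower constraints hold trivially. This also removes any need to track the $\beta$-terms in the top equation, since every $d_0$-exact discrepancy is absorbed into $c_{p+r}$. With Step 3 replaced in this way, your argument proves the statement.
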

\begin{proof}
    Let $\alpha\in Z_r^{p,\bullet}$, then following that was proven in Proposition \ref{prop expression Z_r} we have that $\alpha=d_0\beta_p+\overline{\alpha}$ for some $\beta_p\in\C_{p,\bullet}$. Moreover, for each $i=1,\ldots,r-1$, there exist $z_{p+i},\,\beta_{p+i}\in\C_{p+i,\bullet}$ such that
    \begin{itemize}
        \item $\omega_{p+i}=z_{p+i}+d_i\beta_p-\sum_{j=1}^{i-1}d_{i-j}\beta_{p+j}$,
        \item $\check{\omega}_{p+i}=d_0\beta_{p+i}$ and $\hat{\omega}_{p+i}=d_0^{-1}\left(\partial_i\overline{\alpha}-\sum_{j=1}^{i-1}\partial_{i-j}\overline{\omega}_{p+j}\right)$.
    \end{itemize}
    Hence
    \begin{align*}
        d_r\alpha-\sum_{i=1}^{r-1}d_{r-i}z_{p+i}=&d_rd_0\beta_p+d_r\overline{\alpha}-\sum_{i=1}^{r-1}d_{r-i}\omega_{p+i}+\sum_{i=1}^{r-1}d_{r-i}d_i\beta_p-\sum_{i=1}^{r-1}d_{r-i}\sum_{j=1}^{i-1}d_{i-j}\beta_{p+j}\\=&d_rd_0\beta_p+d_r\overline{\alpha}-\sum_{i=1}^{r-1}d_{r-i}d_0\beta_{p+i}-\sum_{i=1}^{r-1}d_{r-i}\overline{\omega}_{p+i}-\sum_{i=1}^{r-1}d_{r-i}d_0^{-1}\partial_i\overline{\alpha}+\\&+\sum_{i=1}^{r-1}d_{r-i}d_0^{-1}\sum_{j=1}^{i-1}\partial_{i-j}\overline{\omega}_{p+j}+\sum_{i=1}^{r-1}d_{r-i}d_i\beta_p-\sum_{i=1}^{r-1}d_{r-i}\sum_{j=1}^{i-1}d_{i-j}\beta_{p+j}\\=&\partial_r\overline{\alpha}-\sum_{i=1}^{r-1}\partial_{r-i}\overline{\omega}_{p+i}+d_0\sum_{i=1}^{r-1}d_{r-i}\beta_{p+i}-d_0d_r\beta_p\\=&d_c^r\overline{\alpha}+d_0d_0^{-1}\partial_r\overline{\alpha}+d_c^{r-1}(-\overline{\omega}_{p+1})+d_0d_0^{-1}\partial_{r-1}(-\overline{\omega}_{p+1})-d_0d_{r-1}(-\beta_{p+1})+\\&-\sum_{i=2}^{r-1}d_c^{r-i}\overline{\omega}_{p+i}-d_0d_0^{-1}\sum_{i=2}^{r-1}\partial_{r-i}\overline{\omega}_{p+i}+d_0\sum_{i=2}^{r-1}d_{r-i}\beta_{p+i}-d_0d_r\beta_p\,.
    \end{align*}
    Since by Proposition \ref{prop expression B_r}, an element $x\in\C_{p+r,\bullet}$ belongs to $B_r^{p+r,\bullet}$ if
    \begin{align*}
    x=&d_c^{r-1}\overline{c}_{p+1}-\sum_{i=1}^{r-2}d_c^{r-1-i}\overline{\omega}_{p+1+i}+d_0d_0^{-1}\left(\partial_{r-1}\overline{c}_{p+1}-\sum_{i=1}^{r-2}\partial_{r-1-i}\overline{\omega}_{p+1+i}\right)+\\&-d_0d_{r-1}\beta_{p+1}+d_0\sum_{i=1}^{r-2}d_i\beta_{p+r-i}+d_0c_{p+r}
\end{align*}
where
\begin{itemize}
    \item $c_{p+1}=d_0\beta_{p+1}+\overline{c}_{p+1}\in\ker d_0$ for some $\beta_{p+1}\in\C_{p+1,\bullet}$;
    \item for each $i=1,\ldots,r-2$, we take $\omega_{p+i}\in\C_{p+i,\bullet}$ such that $\check{\omega}_{p+i}=d_0\beta_{p+i}$ for some $\beta_{p+i}\in\C_{p+i,\bullet}$.
\end{itemize}
It is enough to take
\begin{itemize}
    \item $c_{p+1}=-d_0\beta_{p+1}$,
    \item $c_{p+i}=-d_{i-1}\beta_{p+1}+\sum_{j=0}^{i-2}d_j\beta_{p+i-j}$ for $i=2,\ldots,r-2$,
    \item $c_{p+r-1}=-d_{r-2}\beta_{p+1}+\sum_{j=0}^{r-3}d_j\beta_{p+r-1-j}+\overline{\omega}_{p+r-1}$,
    \item $c_{p+r}=d_0^{-1}\partial_r\overline{\alpha}-d_r\beta_p$,
\end{itemize}
to have the claim
\begin{align*}
    \Delta_r\left([\alpha]\right)=&\left[d_r\alpha-\sum_{i=1}^{r-1}d_{r-i}z_{p+i}\right]=\left[d_c^r\overline{\alpha}-\sum_{i=2}^{r-1}d_c^{i}\overline{\omega}_{p+r-i}\right]\,
\end{align*}
where $\overline{\omega}_{p+r-i}\in\ker\Box_0\cap\C_{p+r-i}$ such that $d_c^j\overline{\alpha}-\sum_{i=1}^{j-1}d_c^i\overline{\omega}_{p+j-i}=0$ for $j=1,\ldots,r-1$.
\end{proof}
\section{Complexes from spectral sequences}\label{section 4}
The aim of this section is to extract subcomplexes from our truncated $s$-multicomplex by relying on the properties of the spectral sequences that we covered in the previous section.

Our approach is very different to the one behind the Rumin complex, where only one subcomplex is extracted from the original total complex $(\mathrm{Tot}\,\C,d=d_0+d_1+\cdots+d_s)$. The goal here is instead to extract ``as many complexes as possible''. This approach stems from the particular issues arising in the subRiemannian setting where the de Rham complex is an $s$-multicomplex.

As already shown, there is an isomorphism between the space of Rumin forms $E_0^\bullet$ and the quotients $E_1^{\bullet,\bullet}$ appearing at the first page of the spectral sequence. Using the scalar product introduced in Subsection \ref{scalar product}, we can say
\begin{align*}
    E_0^k=\bigoplus_pZ_1^{p,k-p}\cap\left(B_1^{p,k-p}\right)^\perp\,.
\end{align*}
In general, given an arbitrary choice of bidegree $(p,k)$, the Rumin differential on $E_0^k\cap\mathcal{C}_{p,k-p}$ will be given by a sum of several operators. Namely, there exists $I_{p,k}=\lbrace\nu_1,\ldots,\nu_N\rbrace$ with each $\nu_i\in\mathbb N$ and $\nu_1<\cdots<\nu_N$ such that
\begin{align*}
    d_c=\sum_{j\in I_{p,k}}d_c^j\colon E_0^{k}\cap\C_{p,k-p}\longrightarrow\bigoplus_{j\in I_{p,k}}E_0^{k+1}\cap\C_{p+j,k+1-p-j}
\end{align*}
where each $d_c^j$ has bidegree $(j,1-j)$.
Equivalently, this can be expressed by saying that at each page $j\in I_{p,k}$ the differentials $\Delta_j$ are non-trivial maps (see Proposition \ref{Delta_r with Rumin})
\begin{align*}
    \Delta_j\colon Z_j^{p,k-p}/B_j^{p,k-p}\longrightarrow Z_j^{p+j,k+1-p-j}/B_j^{p+j,k+1-p-j}\,.
\end{align*}
When working with Carnot groups, having to deal with such a situation becomes rather problematic: not only is the operator $d_c=\sum_{j\in I_{p,k}}d_c^j$ not invariant under the choice of the scalar product, but the complex $(E_0^\bullet,d_c)$ also does not satisfy the Rockland condition \cite{helffer1980hypoellipticite}. This is the main reason why we are choosing to take as many complexes as possible. 

\begin{lemma}\label{lemma inclusions}
    Given the graded submodules presented in Definition \ref{Z and B defined}, we have the following set of inclusions
    \begin{align*}
        B_1^{p,\bullet}\subseteq B_2^{p,\bullet}\subseteq\cdots\subseteq B_\infty^{p,\bullet}\subseteq Z_\infty^{p,\bullet}\subseteq \cdots\subseteq Z_2^{p,\bullet}\subseteq Z_1^{p,\bullet}\ \text{ for any }p\in\mathbb N\,.
    \end{align*}
\end{lemma}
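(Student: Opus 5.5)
We argue directly from Definition \ref{Z and B defined}. There are three separate claims: the $Z_r$ decrease, the $B_r$ increase, and every $B_r$ sits inside every $Z_{r'}$. Here $Z_\infty^{p,\bullet}=\bigcap_r Z_r^{p,\bullet}$ and $B_\infty^{p,\bullet}=\bigcup_r B_r^{p,\bullet}$. The infinite members of the chain then follow from the finite ones once we show $B_r^{p,\bullet}\subseteq Z_{r'}^{p,\bullet}$ for all $r,r'\ge 1$.

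\emph{Monotonicity of $Z$.} If $\alpha\in Z_{r+1}^{p,\bullet}$, the witnesses $z_{p+1},\dots,z_{p+r}$ satisfy $d_0\alpha=0$ and the equations for $1\le n\le r$. Dropping $z_{p+r}$ and the equation with $n=r$ leaves exactly the witnesses for $\alpha\in Z_r^{p,\bullet}$.

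\emph{Monotonicity of $B$.} Given $\alpha\in B_r^{p,\bullet}$ with witnesses $c_p,\dots,c_{p-r+1}$, we set $c_{p-r}:=0$ and keep the others. The identity $\alpha=\sum_{k=0}^{r}d_kc_{p-k}$ is unchanged. For $1\le l\le r-1$, each constraint $\sum_{k=l}^{r}d_{k-l}c_{p-k}=0$ is the old constraint plus the zero term. The new constraint for $l=r$ reads $d_0c_{p-r}=0$, which holds trivially. Hence $\alpha\in B_{r+1}^{p,\bullet}$.

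\emph{Key step: $B_r^{p,\bullet}\subseteq Z_{r'}^{p,\bullet}$ for all $r'$.} Let $\alpha=\sum_{k=0}^{r-1}d_kc_{p-k}$ with the constraints $\sum_{k=l}^{r-1}d_{k-l}c_{p-k}=0$. We need $z_{p+j}$ with $d_n\alpha=\sum_{i=0}^{n-1}d_iz_{p+n-i}$ for all $n\ge 1$, together with $d_0\alpha=0$. The natural idea is that $\alpha$ is the weight-$p$ component of $dc$, where $c=\sum_k c_{p-k}$, and the constraints say that $dc$ has no components in weights below $p$. So $dc=\alpha+\sum_{j\ge1}y_{p+j}$ with $y_{p+j}=\sum_{k}d_{k+j}c_{p-k}$.

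The key identity is $d(dc)=0$. Its component in weight $p$ is $d_0\alpha=0$. Its component in weight $p+n$ reads
\begin{align*}
d_n\alpha+\sum_{i=0}^{n-1}d_iy_{p+n-i}=0 .
\end{align*}
So $z_{p+j}:=-y_{p+j}$ works for every $n$, and therefore $\alpha\in Z_{r'}^{p,\bullet}$ for every $r'$, that is $\alpha\in Z_\infty^{p,\bullet}$. Since $c$ has only finitely many components and the multicomplex is truncated, all sums are finite and $dc$ is a legitimate element of $\mathrm{Tot}\,\C$. The relation $d^2=0$ from Definition \ref{total complex} then applies componentwise.

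The main obstacle is this last step: checking that the low-weight components of $d(dc)$ vanish precisely because of the $B_r$ constraints. This is a reindexing of the double sum using \eqref{diff relations}, the same kind of bookkeeping as in \eqref{d0dr}. If that reindexing becomes messy, an alternative is to rely on the Rumin-form descriptions in Propositions \ref{prop expression Z_r} and \ref{prop expression B_r}, but the direct $d^2=0$ argument is cleaner and is the one we would pursue.
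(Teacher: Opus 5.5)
Your proposal is correct and follows essentially the same route as the paper. The paper also treats the monotonicity of the $Z_r$ and $B_r$ as immediate from the definitions, and it proves $B_j^{p,\bullet}\subseteq Z_l^{p,\bullet}$ with exactly your witnesses $z_{p+i}=-\sum_{k=0}^{j-1}d_{k+i}c_{p-k}$; the only difference is that it reaches them by expanding $d_md_k=-\sum_{i=1}^{k}d_{m+i}d_{k-i}-\sum_{i=1}^{m}d_{m-i}d_{k+i}$ by hand rather than reading off components of $d(dc)=0$ for $c=\sum_k c_{p-k}$. Your packaging is slightly cleaner, and the step you flag as the main obstacle is in fact immediate: the weight-$(p-l)$ component of $dc$ is literally $\sum_{k=l}^{r-1}d_{k-l}c_{p-k}$, which is the $l$-th defining constraint of $B_r^{p,\bullet}$.
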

\begin{proof}
    While the two sets of inclusions
    \begin{align*}
        B_1^{p,\bullet}\subseteq B_2^{p,\bullet}\subseteq\cdots \subseteq B_\infty^{p,\bullet}\ \text{ and }Z_\infty^{p,\bullet}\subseteq\cdots\subseteq Z_2^{p,\bullet}\subseteq Z_1^{p,\bullet}
    \end{align*}
    follow directly from the definition of these submodules, we focus on explicitly proving the inclusions
    \begin{align*}
        B_j^{p,\bullet}\subseteq Z_l^{p,\bullet}\ \text{ for any }j,\,l\,\in\mathbb N \text{ and any }p\in\mathbb N\,.
    \end{align*}
    These computations will come in handy to prove Proposition \ref{main prop}.

    $j=1$. Let $\alpha\in B_1^{p,\bullet}$, that is $\alpha=d_0c_p$ for some $c_p\in\C_{p,\bullet}$. We want to prove that $\alpha\in Z_l^{p,\bullet}$ for $l\ge 1$. The case of $l=1$ is trivial, since $\alpha\in\mathrm{Im}\,d_0\subset \ker d_0$, hence $\alpha\in Z_1^{p,\bullet}$. For $l\ge 2$, it is sufficient to use formula \eqref{d0dr} to get
    \begin{align*}
        d_m\alpha=&d_md_0c_p=-\sum_{i=1}^md_{m-i}d_ic_p=\sum_{i=1}^md_{m-i}\left(-d_{i}c_p\right)\ \text{ for any }m\in\mathbb N\,,
    \end{align*}
    hence $\alpha\in Z_l^{p,\bullet}$ with $z_{p+i}=-d_ic_p$ for any $i=1,\ldots,l$.

    $j=2$. Let $\alpha\in B_2^{p,\bullet}$, that is $d_0c_{p-1}=0$ and $d_1c_{p-1}+d_0c_{p}=\alpha$ for some $c_{p-i}\in\C_{p-i,\bullet}$ with $i=0,1$. We want to prove that $\alpha\in Z_l^{p,\bullet}$ for $l\ge 1$. If $l=1$, then
    \begin{align*}
        d_0\alpha=d_0(d_1c_{p-1}+d_0c_p)=-d_1d_0c_{p-1}+d_0^2c_p=0 \text{ since }d_0c_{p-1}=0\text{ and }d_0^2=0\,.
    \end{align*}
    For $l\ge 3$, we use an analogous formula as before, since
    \begin{align*}
        d_m\alpha=&d_md_1c_{p-1}+d_md_0c_p=-d_{m+1}d_0c_{p-1}-\sum_{i=1}^md_{m-i}d_{i+1}c_{p-1}-\sum_{i=1}^md_{m-i}d_ic_p\\=&-\sum_{i=1}^{m}d_{m-i}\left(d_{i+1}c_{p-1}+d_ic_p\right)\ \text{ for any }m\in\mathbb N\,,
    \end{align*}
    and so $\alpha\in Z_l^{p,\bullet}$ with $z_{p+i}=-d_{i+1}c_{p-1}-d_ic_p$ for $i=1,\ldots,l$.

    $j\ge 2$. Let $\alpha\in B_j^{p,\bullet}$, then there exist $c_{p-i}\in\C_{p-i,\bullet}$ for $i=0,\ldots,j-1$ such that
    \begin{align*}
        \sum_{k=i}^{j-1}d_{k-i}c_{p-k}=0\ \text{ for }1\le i\le j-1\text{ and }\alpha=\sum_{k=0}^{j-1}d_kc_{p-k}\,.
    \end{align*}
    For each $m\in\mathbb N$, we have that
    \begin{align*}
        d_md_k=-\sum_{i=1}^{k}d_{m+i}d_{k-i}-\sum_{i=1}^{m} d_{m-i}d_{k+i}
    \end{align*}
    so that
    \begin{align*}
        d_m\alpha=&\sum_{k=0}^{j-1}d_md_kc_{p-k}=d_md_0c_p-\sum_{k=1}^{j-1}\left(\sum_{i=1}^{k}d_{m+i}d_{k-i}+\sum_{i=1}^md_{m-i}d_{k+i}\right)c_{p-k}\\=&-\sum_{i=1}^{m}d_{m-i}d_ic_p-\sum_{i=1}^{j-1}\sum_{k=i}^{j-1}d_{m+i}d_{k-i}c_{p-k}-\sum_{i=1}^m\sum_{k=1}^{j-1}d_{m-i}d_{k+i}c_{p-k}\\=&-\sum_{i=1}^md_{m-i}\left(d_ic_p+\sum_{k=1}^{j-1}d_{k+i}c_{p-k}\right)-\sum_{i=1}^{j-1}d_{m+i}\underbrace{\left(\sum_{k=i}^{j-1}d_{k-i}c_{p-k}\right)}_{=0}=-\sum_{i=1}^md_{m-i}\sum_{k=0}^{j-1}d_{k+i}c_{p-k}
    \end{align*}
    hence $\alpha\in Z_l^{p,\bullet}$ with $z_{p+i}=-\sum_{k=0}^{j-1}d_{k+i}c_{p-k}$ for $i=1,\ldots,l$.
\end{proof}

\begin{proposition}\label{main prop}

    Let us consider a fixed bidegree $(p,k)$ and assume $I_{p,k}=\lbrace\nu_1,\ldots,\nu_N\rbrace$, that is
    \begin{align*}
        d_c^j\colon E_0^k\cap\C_{p,k-p}\longrightarrow E_0^{k+1}\cap\C_{p+j,k+1-p-j}
    \end{align*}
    is a non trivial map of bidegree $(j,1-j)$ for each $j\in I_{p,k}$.
    
    Then for each $j\in I_{p,k}$, the map
    \begin{align}\label{1st line}
       \begin{split}
           \Delta_j\colon &Z_j^{p,k-p}/B_{m_1}^{p,k-p}\longrightarrow Z_{m_2}^{p+j,k+1-p-j}/B_j^{p+j,k+1-p-j}\\
        &\Delta_j\left([\alpha]\right)=\left[d_j\alpha-\sum_{i=1}^{j-1}d_{j-i}z_{p+i}\right]=\left[d_c^j\overline{\alpha}-\sum_{i=2}^{j-1}d_c^{i}\overline{\omega}_{p+j-i}\right]
       \end{split} 
    \end{align}
    is well-defined for any choice of $m_1,m_2\in\mathbb N$. 

    Moreover, if for a fixed $(p,k)$ we have that both
\begin{align*}
    d_c^{j}\colon E_0^{k}\cap\C_{p,k-p}\longrightarrow E_0^{k+1}\cap\C_{p+{j},k+1-p-{j}}\ \text{ and }\  d_c^{l}\colon E_0^{k+1}\cap\C_{p+{j},k+1-p-{j}}\longrightarrow E_0^{k+2}\cap\C_{p+{j}+l,k+2-p-{j}-l}
\end{align*}
are non trivial maps, then for any $m_1,m_2\in\mathbb N$

\begin{align*}
    Z_j^{p,k-p}/B_{m_1}^{p,k-p}\xrightarrow[]{\Delta_j}Z_l^{p+j,k+1-p-j}/B_j^{p+j,k+1-p-j}\xrightarrow[]{\Delta_l}Z_{m_2}^{p+j+l,k+2-p-j-l}/B_l^{p+j+l,k+2-p-j-l}
\end{align*}
we have $\Delta_l\circ\Delta_j=0$.
\end{proposition}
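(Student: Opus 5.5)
The plan is to work directly with the defining formula of Proposition \ref{theorem Delta_r}, encoding the zig-zag data $\lbrace z_{p+i}\rbrace$ as a single element of $\mathrm{Tot}\,\C$. Given $\alpha\in Z_j^{p,k-p}$ with a family $\lbrace z_{p+i}\rbrace_{1\le i\le j-1}$ as in Definition \ref{Z and B defined}, set $x:=\alpha-\sum_{i=1}^{j-1}z_{p+i}$. The defining equations of $Z_j^{p,\bullet}$ say exactly that the components of $dx$ in weights $p,\ldots,p+j-1$ vanish. The component in weight $p+j$ is $y:=d_j\alpha-\sum_{i=1}^{j-1}d_{j-i}z_{p+i}$, which is the representative in \eqref{1st line}. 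The second expression in \eqref{1st line}, in terms of $d_c$ and the $\overline{\omega}$'s, is then just Proposition \ref{Delta_r with Rumin}, so only the $d_j$-formula needs to be handled.

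\emph{Step 1: the image lies in $Z_{m_2}$ for every $m_2$.} Since $d^2=0$ and $dx$ vanishes in weights below $p+j$, reading $d(dx)=0$ weight by weight gives
\begin{align*}
d_0y=0\quad\text{and}\quad d_ny+\sum_{i=0}^{n-1}d_i(dx)_{p+j+n-i}=0\quad\text{for all }n\ge1 .
\end{align*}
Hence $y\in Z_\infty^{p+j,\bullet}$, with witnesses $z'_{p+j+i}:=-(dx)_{p+j+i}$ for all $i$. By Lemma \ref{lemma inclusions}, $Z_\infty\subseteq Z_{m_2}$, and in particular $y\in Z_l$ as needed later. Here we use that $\C$ is truncated, so $d$ is a finite sum.

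\emph{Step 2: independence of choices.} I expect this to be the main obstacle. One must show that changing $\alpha$ by an element of $B_{m_1}^{p,\bullet}$, or changing the witnesses $z$, changes $y$ only by an element of $B_j^{p+j,\bullet}$. For the $z$'s: two admissible families differ by $w=\sum_{i=1}^{j-1}w_{p+i}$ whose $d$ vanishes in weights $p+1,\ldots,p+j-1$. Then the difference of the two $y$'s is $(dw)_{p+j}=\sum_{k}d_{j-k}w_{p+k}$, and these are precisely the equations defining $B_j^{p+j,\bullet}$ with $c_{p+j-k}=w_{p+k}$ (padding with $c_{p+j}=0$). Indices must be matched carefully against Definition \ref{Z and B defined}.

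For $\alpha\in B_{m_1}^{p,\bullet}$, write $\alpha=\sum_k d_kc_{p-k}$ with the lower relations. Take the witnesses produced in the proof of Lemma \ref{lemma inclusions}, namely $z_{p+i}=-\sum_k d_{k+i}c_{p-k}$. These are exactly the weight $p+i$ components of $dc$, with $c=\sum_k c_{p-k}$, up to sign. With this choice $x$ coincides with the part of $dc$ in weights $\ge p$. Since the components of $dc$ below weight $p$ vanish, we get $dx=d(dc)=0$, so $y=0$. Combining this with the independence of the $z$'s, $\Delta_j[\alpha]\in B_j$ for every admissible choice. Linearity then gives well-definedness on $Z_j/B_{m_1}$ for any $m_1$, including when $m_1>j$, since Lemma \ref{lemma inclusions} places $B_{m_1}$ inside $Z_j$.

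\emph{Step 3: $\Delta_l\circ\Delta_j=0$.} Evaluate $\Delta_l$ on $y$ using the witnesses $z'$ from Step 1, which is legitimate by Step 2. This gives
\begin{align*}
\Delta_l[y]=\Big[d_ly-\sum_{i=1}^{l-1}d_{l-i}z'_{p+j+i}\Big]=\big[(d\,dx)_{p+j+l}\big]=0 .
\end{align*}
The composite is therefore zero in $Z_{m_2}/B_l$. The nontriviality hypotheses on $d_c^j$ and $d_c^l$ are not needed for the argument. They only indicate when the statement is interesting.
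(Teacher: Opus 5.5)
Your argument is essentially the paper's. Your $x=\alpha-\sum_i z_{p+i}$ is the natural family of $c$'s exhibiting $y\in B_{j+1}^{p+j,\bullet}$. Your witnesses $z'_{p+j+i}=-(dx)_{p+j+i}$ are exactly the ones Lemma \ref{lemma inclusions} attaches to that element, and the paper uses them, via $B_{j+1}\subseteq Z_{l+1}$, to get $\Delta_l\circ\Delta_j=0$. It uses the same lemma's witnesses for $\alpha\in B_{m_1}^{p,\bullet}$. The only real difference is that you prove independence of the witnesses directly, instead of citing Theorem 2.10 of \cite{livernet2020spectral}.

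One slip needs fixing in Step 2. There $x$ contains only the components of $dc$ in weights $p,\dots,p+j-1$, not all weights $\ge p$, so $dx\neq d(dc)$. In fact $y=-d_0(dc)_{p+j}$, which is the paper's $-d_0\sum_k d_{k+j}c_{p-k}$. This is not $0$, but it lies in $B_1^{p+j,\bullet}\subseteq B_j^{p+j,\bullet}$, which is all you need. The same omitted term appears in Step 3: your representative actually equals $-d_0(dx)_{p+j+l}$ rather than $(d\,dx)_{p+j+l}$. This is harmless there, since you only claim an equality of classes.
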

\begin{proof}
    Using Lemma \ref{lemma inclusions}, we know that given an arbitrary $m_1\in\mathbb N$ we have the inclusion $B_{m_1}^{p,k-p}\subseteq Z_{l}^{p,k-p}$ for any $l\ge 1$. In particular, $$\alpha\in B_{m_1}^{p,k-p}\subseteq Z_{j+1}^{p,k-p}\ \text{ with }\ z_{p+i}=-\sum_{k=0}^{m_1-1}d_{k+i}c_{p-k} \text{ for }i=1,\ldots,j+1$$
    so that
    \begin{align*}
        d_j\alpha-\sum_{i=1}^{j-1}d_{j-i}z_{p+i}=d_j\sum_{k=0}^{m_1-1}d_kc_{p-k}+\sum_{i=1}^{j-1}\sum_{k=0}^{m_1-1}d_{j-i}d_{k+i}c_{p-k}=-d_0\sum_{k=0}^{m_1-1}d_{k+j}c_{p-k}\in B_1^{p+j,k+1-p-j}\,.
    \end{align*}
    Well-definedness with respect to the choice of witnesses $z_{p+i}$ follows from Theorem 2.10 in \cite{livernet2020spectral}, since the class $[d_j\alpha-\sum_{i=1}^{j-1}d_{j-i}z_{p+i}]\in E_j^{p+j,\bullet}$ is independent of the chosen $z_{p+i}$.

    Similarly, since $$d_j\alpha-\sum_{i=1}^{j-1}d_{j-i}z_{p+i}\in B_{j+1}^{p+j,k+1-p-j}\subseteq Z_{l+1}^{p+j,k+1-p-j}\ \text{ for any }\ \alpha\in Z_j^{p,k-p}\,,$$we can repeat the same reasoning to get $\Delta_l\circ\Delta_{j}=0$.

    Finally, equation \eqref{1st line} follows directly from Proposition \ref{Delta_r with Rumin}.
\end{proof}

    Notice how in Proposition \ref{main prop} each non-trivial map $d_c^j$ (or equivalently $\Delta_j$) of bidegree $(j,1-j)$ determines the page of the $Z$ submodule in the domain and the $B$ submodule in the target, i.e.
    \begin{align*}
        \Delta_j\colon Z_j^{p,\bullet}/B_{m_1}^{p,\bullet}\longrightarrow Z_{m_2}^{p+j,k+1-p-j}/B_j^{p+j,k+1-p-j}\,.
    \end{align*}
    Here $m_1,m_2\in\mathbb N$ are taken to be arbitrary, however if we are interested in constructing a complex then $m_1$ will be determined by the bidegree $(m_1,1-m_1)$ of the incoming map $\Delta_{m_1}$, while $m_2$ will instead be determined by the bidegree $(m_2,1-m_2)$ of the outgoing map $\Delta_{m_2}$, i.e.
    \begin{align*}
        \xrightarrow[]{\Delta_{k_1}}Z_{m_1}^{p-m_1,k-1-p+m_1}/&B_{k_1}^{p-m_1,k-1-p+m_1}\xrightarrow[]{\Delta_{m_1}}Z_j^{p,k-p}/B_{m_1}^{p,k-p}\xrightarrow[]{\Delta_j}Z_{m_2}^{p+j,k+1-p-j}/B_j^{p+j,k+1-p-j}\longrightarrow\\ &\xrightarrow[]{\Delta_{m_2}}Z_{k_2}^{p+j+m_2,k+2-p-j-m_2}/B_{m_2}^{p+j+m_2,k+2-p-j-m_2}\xrightarrow[]{\Delta_{k_2}}
    \end{align*}
    The reasoning can be extended to include each degree $k\in\mathbb N$.

    In this way, we are able to construct as many subcomplexes as the elements $j\in I_{k,p}$ for each $(p,k)$ where $E_1^{p,k-p}$ is non trivial.

    Since we are interested in defining these complexes as subspaces of forms and not as quotients like in \eqref{1st line}, we will rely on the scalar product introduced in Subsection \ref{scalar product} to define them.
    \begin{definition}\label{subc as subsp}[Spectral complexes associated with $\C$] Given an $s$-multicomplex, if for some bidegree $(p,k)$ we have $E_1^{p,k-p}$ is non trivial (or equivalently $\ker\Box_0\cap\C_{p,k-p}\neq 0$), then for each $j\in I_{p,k}$ and $l\in I_{p-l,k-1}$, we can define
        \begin{align}\label{notation spaces}
        E_{j,l}^{p,k-p}:=Z_{j}^{p,k-p}\cap\left(B_l^{p,k-p}\right)^\perp\subset \ker\Box_0\cap\C_{p,k-p}\,.
    \end{align}
    Moreover, for each $j\in I_{p,k}$, $l\in I_{p-l,k-1}$ and $i\in I_{p+j,k+1}$
    \begin{align*}
        E_{l,m_1}^{p-l,k-1-p+l}\xrightarrow[]{\Delta_l}E_{j,l}^{p,k-p}\xrightarrow[]{\Delta_j}E_{i,j}^{p+j,k+1-p-j}\xrightarrow[]{\Delta_i}E_{m_2,i}^{p+j+i,k+2-p-j-i}
    \end{align*}
    satisfies $\Delta_j\circ \Delta_l=\Delta_i\circ \Delta_j=0$ for any (non trivial) choice of $m_1,\,m_2\in\mathbb N$.

    To make the definition even more explicit, when we write
    \begin{align*}
        \Delta_j\colon E_{j,l}^{p,k-p}\xrightarrow[]{} E_{m,j}^{p+j,k+1-p-j}
    \end{align*}
    we mean the action of the operator \eqref{1st line}, projected on the orthogonal complement of $B_j^{p+j,k+1-p-j}$.

    Since this construction applies in every degree $k$, it yields a family of subspaces $E_{j,l}^{\bullet,\bullet}$. These subspaces are connected by the operators $\Delta_j$ of bidegree $(j,1-j)$, hence forming a collection of complexes.

    As the definition $E_{j,l}^{\bullet,\bullet}$ is given using spectral sequences, we  will refer to the collection of such subcomplexes as the \textit{spectral complexes} associated with the $s$-multicomplex $\C$ and denote it by
    \begin{align*}
        \bigg\lbrace \left( E_{j,l}^{\bullet,\bullet},\Delta_j\right)\bigg\rbrace_{j\in I_{\bullet,\bullet}}\,.
    \end{align*}
    \end{definition}
    \subsection{Considerations on spectral complexes}\label{considerations}
    The construction presented in Definition \ref{subc as subsp} consists of a family of complexes. Notably, for each $(p,k)$ such that $E_0^k\cap\C_{p,k-p}$ is non trivial (or equivalently $E_1^{p,k-p}$ is non trivial), there will be \textit{at least} one subcomplex that contains a subspace of Rumin forms of bidegree $(p,k)$.

    Indeed, given the series of inclusions shown in Lemma \ref{lemma inclusions}, we have that for any $j,l\ge 1$
    \begin{align*}
        Z_j^{p,k-p}\cap\left(B_l^{p,k-p}\right)^\perp\subseteq Z_1^{p,k-p}\cap\left(B_1^{p,k-p}\right)^\perp=\ker\Box_0\cap\C_{p,k-p}\,.
    \end{align*}
    In other words, for each non trivial choice of $(p,k)$ and any $j,l\ge 1$, we have that $E_{j,l}^{p,k-p}\subseteq E_0^k\cap\C_{p,k-p}$, i.e. it is a subspace of Rumin forms of degree $k$ and weight $p$.

    More interestingly, for any two non trivial choices of bidegrees $(p_1,k)$ and $(p_2,k+1)$ with $p_1<p_2$, there exists at least one complex that, when restricted to degree $k$, will look like
    \begin{align*}
        \Delta_{p_2-p_1}\colon E_{p_2-p_1,m_1}^{p_1,k-p_1}=Z_{p_2-p_1}^{p_1,k-p_1}\cap\left(B_{m_1}^{p_1,k-p_1}\right)^\perp\longrightarrow E_{m_2,p_2-p_1}^{p_2,k+1-p_2}=Z_{m_2}^{p_2,k+1-p_2}\cap\left(B_{p_2-p_1}^{p_2,k+1-p_2}\right)^\perp
    \end{align*}
    for some appropriate choice of $m_1,m_2\in\mathbb N$.

    \begin{remark}\label{charact spectral subs in terms of Rumin} It is possible to streamline the characterisation of the subspaces $E_{j,l}^{p,k-p}$ using the formulae found in Propositions \ref{prop expression Z_r} and \ref{prop expression B_r} in terms of the differentials $d_c^j$ (here we are assuming $j,l\ge 2$):
    \begin{align*}
        \overline{\alpha}\in Z_j^{p,k-p}\ \Longleftrightarrow\ & \exists\,\overline{\omega}_{p+i}\in\ker\Box_0\cap\C_{p+i,k-p-i}\text{ with }i=1,\ldots,j-2\text{ such that}\\
        &\sum_{i=1}^{j-1}\left(d_c^i\overline{\alpha}-\sum_{h=1}^{i-1}d_c^h\overline{\omega}_{p+i-h}\right)=0\\
        \overline{\beta}\in B_l^{p,k-p}\ \Longleftrightarrow\ &\exists\,\overline{c}_{p-i}\in\ker\Box_0\cap\C_{p-i,k-1-p+i}\text{ with }i=1,\ldots,l-1\text{ such that}\\ &\sum_{i=1}^{l-1}\left(d_c^i\overline{c}_{p-l+1}-\sum_{h=1}^{i-1}d_c^h\overline{c}_{p-l+1-h+i}\right)=\overline{\beta}
    \end{align*}
         If we introduce the notation
    \begin{align*}
        d_c^j\big\vert_p:=d_c^j\colon\ker\Box_0\cap\C_{p,\bullet}\longrightarrow\ker\Box_0\cap\C_{p+j,\bullet}
    \end{align*}
    to denote the restriction of the map $d_c^j$ to the Rumin forms of weight strictly equal to $p$, we get that
    \begin{align*}
        Z_{r}^{p,k-p}=&\ker d_c^1\big\vert_p \cap\ker\left(d_c^2\big\vert_p+d_c^1\big\vert_{p+1}\right)\cap\cdots\cap\ker\left(d_c^{r-1}\big\vert_p+d_c^{r-2}\big\vert_{p+1}+\cdots+d_c^1\big\vert_{p+r-2}\right)\\=&\bigcap_{j=1}^{r-1}\ker\left(d_c^j\big\vert_p+\sum_{i=1}^{j-1}d_c^{i}\vert_{p+j-i}\right)\\
        B_{r}^{p,k-p}=&\mathrm{Im}\,d_c^1\big\vert_{p-r+1}+\mathrm{Im}\,\left(d_c^2\big\vert_{p-r+1}+d_c^1\big\vert_{p-r+2}\right)+\cdots+\mathrm{Im}\,\left(d_c^{r-1}\big\vert_{p-r+1}+\cdots+d_c^1\big\vert_{p-1}\right)\\=&\sum_{j=1}^{r-1}\mathrm{Im}\,\left(d_c^j\big\vert_{p-r+1}+\sum_{i=1}^{j-1}d_c^i\big\vert_{p-r+1+j-i}\right)\,.
    \end{align*}
    \end{remark}

    In the case where for each degree $k=0,\ldots,n$, the space of Rumin forms is non trivial only in one weight, i.e. for each $k$, there exists only one $p=p(k)$ such that $E_1^{p,k-p}\neq 0$, then the spectral complexes construction extracts the Rumin complex.
    Indeed, assume that $E_1^{p_1,k-p_1}$ and $E_1^{p_2,k+1-p_2}$ are the only non trivial spaces in degree $k$ and $k+1$ respectively (here we are also implicitly assuming $p_2>p_1$). In this case, $\Delta_{p_2-p_1}$ will be the only non trivial operator and so we are interested in 
    \begin{align*}
        Z_{p_2-p_1}^{p_1,k-p_1}\cap \left(B_{m_1}^{p_1,k-p_1}\right)^\perp\xrightarrow[]{\Delta_{p_2-p_1}}Z_{m_2}^{p_2,k+1-p_2}\cap\left( B_{p_2-p_1}^{p_2,k+1-p_2}\right)^\perp\,.
    \end{align*}
    Then for any $p>p_1$, $\ker\Box_0\cap\C_{p,k-p}=0$, but also $\ker \Box_0\cap\C_{p,k+1-p}=0$ as long as $p<p_2$. Therefore
  \begin{align*}
      Z_{p_2-p_1}^{p_1,k-p_1}=&Z_1^{p_1,k-p_1}\cap\ker d_c^1\big\vert_{p_1}\cap \ker\left(d_c^2\big\vert_{p_1}+d_c^1\big\vert_{p_1+1}\right)\cap\cdots\\&\cap\ker\left(d_c^{p_2-p_1-1}\big\vert_{p_1}+d_c^{p_2-p_1-2}\big\vert_{p_1+1}+\cdots+d_c^1\big\vert_{p_1+p_2-p_1-2}\right)\\=&Z_1^{p_1,k-p_1}\cap\ker d_c^1\big\vert_{p_1}\cap\ker d_c^2\big\vert_{p_1}\cap\cdots\cap\ker d_c^{p_2-p_1-1}\big\vert_{p_1}=\ker d_0\cap\C_{p_1,k-p_1}\,,
  \end{align*}
  while 
  \begin{align*}
      B_{p_2-p_1}^{p_2,k+1-p_2}=&\mathrm{Im}\,d_0+\mathrm{Im}\,d_c^1\big\vert_{p_2-(p_2-p_1)+1}+\mathrm{Im}\,\left(d_c^2\big\vert_{p_2-(p_2-p_1)+1}+d_c^1\big\vert_{p_2-(p_2-p_1)+2}\right)+\cdots+\\&+\mathrm{Im}\,\left(d_c^{p_2-p_1-1}\big\vert_{p_2-(p_2-p_1)+1}+\cdots+d_c^1\big\vert_{p_2-1}\right)=\mathrm{Im}\,d_0=B_1^{p_2,k+1-p_2}\,.
  \end{align*}
  If we repeat the same reasoning to compute $B_{m_1}^{p_1,k-p_1}$ and $Z_{m_2}^{p_2,k+1-p_2}$, since we are assuming that also in degrees $k-1$ and $k+2$ the non trivial Rumin forms appear in one unique weight, we get that
  \begin{itemize}
      \item $Z_{p_2-p_1}^{p_1,k-p_1}\cap \left( B_{m_1}^{p_1,k-p_1}\right)^\perp=Z_{1}^{p_1,k-p_1}\cap \left( B_{1}^{p_1,k-p_1}\right)^\perp=\ker\Box_0\cap\C_{p_1,k-p_1}$,
      \item $Z_{m_2}^{p_2,k+1-p_2}\cap\left(B_{p_2-p_1}^{p_2,k+1-p_2}\right)^\perp=Z_{1}^{p_2,k+1-p_2}\cap\left(B_{1}^{p_2,k+1-p_2}\right)^\perp=\ker\Box_0\cap\C_{p_2,k+1-p_2}$, and
      \item $\Delta_{p_2-p_1}\colon\ker\Box_0\cap\C_{p_1,k-p_1}\longrightarrow\ker\Box_0\cap\C_{p_2,k+1-p_2}$ coincides with the Rumin differential $d_c^{p_2-p_1}$ of bidegree $(p_2-p_1,1-p_2+p_1)$.
  \end{itemize}
  By extending this type of reasoning, we also get that if at each degree $k=0,\ldots,n$ the spaces $E_1^{p,k-p}$ are non trivial for \textit{at most 2} possible weights, then the action of each non trivial $\Delta_r$ will coincide with the corresponding Rumin differential $d_c^r$ of bidegree $(r,1-r)$. In this more general case where different weights appear, however, the collection $\lbrace\left(E_{j,l}^{\bullet,\bullet},\Delta_j\right)\rbrace_{j\in I_{\bullet,\bullet}}$ will consist of multiple complexes and consequently the spaces $E_{j,l}^{p,k-p}$ will be strict subspaces of $\ker\Box_0\cap\C_{p,k-p}$ at least for some choice of $k$ and $p$.

  In the most general case, where for some degree $k$ the space $E_0^k$ is non trivial for three or more choices of weight, the operator $\Delta_r$ with $r\ge 3$ appears to be a sum of differentials $d_c^j$ of different orders. However, even in this case, each operator $\Delta_r\colon E_{r,m_1}^{p,\bullet}\longrightarrow E_{m_2,r}^{p+r,\bullet}$ is invariant under the choice of scalar product, since it coincides with the action of the differential operator arising from the action of $d$ on the $r^{th}$ page of the spectral sequence.

  Finally, let us consider the cohomology computed by such spectral complexes. 
In general, the direct sum of the cohomology of each subcomplex is not isomorphic to the cohomology of the total complex $(\mathrm{Tot}\,\C,d=d_0+d_1+\cdots+d_s)$. Indeed, given any $j,l\in\mathbb N$ such that the sequence
        \begin{align*}
            E_{l,m_1}^{p-l,k-1-p+l}\xrightarrow[]{\Delta_l}E_{j,l}^{p,k-p}\xrightarrow[]{\Delta_j}E_{m_2,j}^{p+j,k+1-p-j}
        \end{align*}
        is non trivial, then
        \begin{align*}
            \frac{\ker \Delta_j\colon E_{j,l}^{p,k-p}\longrightarrow E_{m_2,j}^{p+j,k+1-p-j}}{\mathrm{Im}\,\Delta_l\colon E_{l,m_1}^{p-l,k-1-p+l}\longrightarrow E_{j,l}^{p,k-p}}=\frac{Z_{j+1}^{p,k-p}}{B_{l+1}^{p,k-p}}\,.
        \end{align*}
        However, since we are working with a filtration by weight of finite depth (see Remark \ref{remark 1.2}), for each $(p,k)$ there will be a particular couple $(J,L)=(J_{p,k},L_{p,k})\in\mathbb N\times\mathbb N$ such that $Z_{J+1}^{p,k-p}=Z_\infty^{p,k-p}$ and $B_{L+1}^{p,k-p}=B_\infty^{p,k-p}$, and so for each degree $k$:
        \begin{align*}
            \bigoplus_p\frac{\ker d_c^J\colon E_{J,L}^{p,k-p}\longrightarrow E_{m_2,J}^{p+J,k+1-p-J}}{\mathrm{Im}\,d_c^{L}\colon E_{L,m_1}^{p-L,k-1-p+L}\longrightarrow E_{J,L}^{p,k-p}}=\bigoplus_p E_\infty^{p,k-p}=H^k(\mathrm{Tot}\,\C,d)\,.
        \end{align*}

\subsection{Hodge duality}
It is a well-known fact that the Rumin complex $(E_0^\bullet,d_c)$ is closed under Hodge-$\star$ duality. Our aim now is to show that, even though each one of the subcomplexes constructed above is \textit{not} Hodge-$\star$ closed, the whole collection $\lbrace\left(E_{j,l}^{\bullet,\bullet},d_c^j\right)\rbrace_{j\in I_{\bullet,\bullet}}$ is.

In order to introduce the Hodge-$\star$ operator to our commutative dg-algebra $(\mathrm{Tot}\,\C,\wedge,d=d_0+d_1+\cdots+d_s)$, we require a scalar product (as introduced in Subsection \ref{scalar product}), as well as a corresponding volume form. When dealing with $\mathrm{Tot}\,\C$ being the space of smooth differential forms on a (not necessarily compact) Riemannian manifold, it is useful to think about this in terms of the $L^2$-inner product on forms. We refer to \cite{F+T1} for a more thorough presentation in the case of simply connected nilpotent Lie groups.

\begin{definition}[Hodge-$\star$ operator]\label{Hodge star def}

Given a differential algebra $(\mathrm{Tot}\,\C,\wedge,d,\langle\cdot,\cdot\rangle,\mathrm{vol})$ with $(\mathrm{Tot}\,\C,d=d_0+d_1+\cdots+d_s)$ an $s$-multicomplex, $\langle\cdot,\cdot\rangle_k$ a scalar product on each $(\mathrm{Tot}\,C)_k$ such that elements of different weight are orthogonal, and $\mathrm{vol}\in\C_{Q,n-Q}$ the associated volume form, then we can define the Hodge-$\star$ operator as the linear operator such that
\begin{align*}
    \star\colon\C_{p,k-p}&\longrightarrow\C_{Q-p,n-k-Q+p}\\
    \alpha\wedge\star\beta:=\langle\alpha,&\beta\rangle_k\mathrm{vol}\ \text{ for any }\alpha,\beta\in\C_{p,k-p}\,.
\end{align*}
    
\end{definition}
From the definition, it easily follows that 
\begin{align}\label{properties of star}
    \alpha\wedge\star\beta=\beta\wedge\star\alpha\ ,\ \star\star\beta=(-1)^{k(n-k)}\beta\ \text{ and }\langle\star\alpha,\star\beta\rangle_{n-k}\ \text{ for any }\alpha,\beta\in(\mathrm{Tot}\,\C)_k\,.
\end{align}
Analogously to Definition \ref{adjoint of d0}, we can define the formal adjoint of the map $d$, as well as of each structural map $d_i$.
\begin{definition}[Adjoint of $d$]
    Using the scalar product $\langle\cdot,\cdot\rangle$, we can define the formal adjoint of $d$, which we will denote by $\delta$ as the map 
    \begin{align*}
        \delta&\colon\left(\mathrm{Tot}\,C\right)_k\longrightarrow\left(\mathrm{Tot}\,\C\right)_{k-1}\\ \langle d\alpha,\beta\rangle_k=\langle\alpha&,\delta\beta\rangle_{k-1}\ \text{ for any }\alpha\in(\mathrm{Tot}\,\C)_{k-1}\,,\beta\in(\mathrm{Tot}\,\C)_k
    \end{align*}
\end{definition}
\begin{remark}
     When dealing with compactly supported differential forms $\alpha$ and $\beta$ of degree $k-1$ and $k$ respectively on a Riemannian manifold $X$, Stokes' theorem together with the properties listed in \eqref{properties of star} also imply that
     \begin{align}\label{formula of delta}
        \delta=(-1)^{n(k+1)+1}\star d\star\colon\left(\mathrm{Tot}\,\C\right)_k\longrightarrow\left(\mathrm{Tot}\,\C\right)_{k-1}\,,
    \end{align}
    since
    \begin{align*}
        \int_Xd(\alpha\wedge\star\beta)=\int_Xd\alpha\wedge\star\beta+(-1)^{k-1}\int_X\alpha\wedge d(\star\beta)=\int_Xd\alpha\wedge\star\beta+(-1)^{k-1+(n-k+1)(k-1)}\int_{X}\alpha\wedge\star(\star d\star\beta)\,.
    \end{align*}
\end{remark}
\begin{lemma}\label{lemma 3.8}Given the $s$-multicomplex $(\mathrm{Tot}\,\C,\wedge,d,\langle\cdot,\cdot\rangle,\mathrm{vol})$ with $d=d_0+d_1+\cdots+d_s$, once we define
\begin{align*}
    \delta_i\colon&\left(\mathrm{Tot}\,\C\right)_{k}\longrightarrow\left(\mathrm{Tot}\,\C\right)_{k-1}\ ,\ i=0,1,\ldots,s\\
    \langle d_i\alpha,\beta\rangle_k=&\langle\alpha,\delta_i\beta\rangle_{k-1}\ \text{ for any }\alpha\in\left(\mathrm{Tot}\,\C\right)_{k-1},\,\beta\in\left(\mathrm{Tot}\,\C\right)_{k}\,,
\end{align*}
we have
\begin{align*}
    \delta_i\C_{p,k-p}\subset\C_{p-i,k-1-p+i}\ \text{ and }\delta_i=(-1)^{(k+1)n+1}\star d_i\star\,.
\end{align*}\end{lemma}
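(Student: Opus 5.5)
The plan is to prove the two claims separately. For the bidegree statement $\delta_i\C_{p,k-p}\subset\C_{p-i,k-1-p+i}$, fix $\beta\in\C_{p,k-p}$ and decompose $\delta_i\beta\in(\mathrm{Tot}\,\C)_{k-1}$ into its weight components, $\delta_i\beta=\sum_a(\delta_i\beta)_a$ with $(\delta_i\beta)_a\in\C_{a,k-1-a}$. For any $\alpha\in\C_{a,k-1-a}$ we have $d_i\alpha\in\C_{a+i,k-a-i}$. Because elements of different weight are orthogonal,
\begin{align*}
\langle\alpha,(\delta_i\beta)_a\rangle_{k-1}=\langle\alpha,\delta_i\beta\rangle_{k-1}=\langle d_i\alpha,\beta\rangle_k=0\quad\text{whenever }a+i\neq p.
\end{align*}
Taking $\alpha=(\delta_i\beta)_a$ shows that every component with $a\neq p-i$ vanishes. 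Hence $\delta_i\beta\in\C_{p-i,k-1-p+i}$. The case $i=0$ gives the claim $w(\delta_0x)=w(x)$ used in Definition \ref{adjoint of d0}.

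For the formula $\delta_i=(-1)^{(k+1)n+1}\star d_i\star$, I would start from formula \eqref{formula of delta}, $\delta=(-1)^{n(k+1)+1}\star d\star$, and split both sides into bidegree components. By Definition \ref{Hodge star def}, $\star$ sends $\C_{p,k-p}$ to $\C_{Q-p,n-k-Q+p}$. So for $\beta\in\C_{p,k-p}$, the term $\star d_i\star\beta$ lies in weight $Q-(Q-p+i)=p-i$ and degree $k-1$. The right-hand side $(-1)^{n(k+1)+1}\sum_i\star d_i\star\beta$ is therefore the decomposition of $\delta\beta$ into weights $p,p-1,\dots,p-s$.

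On the left-hand side, $\delta=\sum_i\delta_i$, since $d=\sum_i d_i$ and adjoints are additive. By the first part, $\delta_i\beta$ is exactly the weight $p-i$ component of $\delta\beta$. Both sides are direct sums over weight, so comparing the components of weight $p-i$ gives $\delta_i\beta=(-1)^{n(k+1)+1}\star d_i\star\beta$ for every $i$.

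The main obstacle is justifying \eqref{formula of delta} in the abstract setting, since the Stokes argument assumes forms on a manifold. To avoid this, I would verify the identity for each $d_i$ directly, by running the same integration-by-parts computation as in the Remark, now applied to $d_i$. This requires two inputs: that each $d_i$ is a graded derivation of $\wedge$, and that $\int d_i(\cdot)=0$. For the second, apply Stokes to $d$ and separate weights: only the top-weight component can pair nontrivially with $\mathrm{vol}$. Alternatively, I would simply assume \eqref{formula of delta} as given in the setting of that Remark and deduce the component-wise identity by weight separation as above. The sign bookkeeping can be taken from the Remark unchanged.
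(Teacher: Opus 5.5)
Your proposal is correct and follows the paper's proof. Both arguments use weight-orthogonality of the pairing $\langle d_i\alpha,\beta\rangle$ to locate $\delta_i\beta$ in weight $p-i$, and then split $\delta=(-1)^{n(k+1)+1}\star d\star$ into weight components, matching the terms of weight $p-i$. Your version is somewhat more careful: testing against $\alpha=(\delta_i\beta)_a$ makes rigorous the paper's ``$\neq 0\Longleftrightarrow$'' step, whose converse direction is not literally true. You also rightly note that the paper simply takes \eqref{formula of delta} for granted from the Stokes-based Remark.
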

\begin{proof}
    For the first claim, it is enough to take $\alpha\in\C_{p,k-p}$ and any $\beta\in\left(\mathrm{Tot}\,\C\right)_{k-1}$ so that
    \begin{align*}
        \langle\delta_i\alpha,\beta\rangle_{k-1}=\langle\alpha,d_i\beta\rangle_k\neq 0 \ \Longleftrightarrow\ \beta\in\C_{p-i,k-1-p+i}\,. 
    \end{align*}
    For the second part, by linearity we have 
    \begin{align*}
        \sum_{i=0}^s\delta_i=\delta=(-1)^{n(k+1)+1}\star d\star=\sum_{i=0}^s(-1)^{n(k+1)+1}\star d_i\star
    \end{align*}
    and since elements of different weight are (orthogonal and hence) linearly independent, we have
    \begin{align*}
        \delta_i=(-1)^{n(k+1)+1}\star d_i\star\ \text{ for each }i=0,1,\ldots,s\,.
    \end{align*}
\end{proof}
    \begin{lemma}\label{Rumin forms closed under Hodge star}
        The space of Rumin forms $E_0^\bullet=\ker\Box_0\cap\mathrm{Tot}\,\C$ is closed under Hodge-$\star$, i.e.
        \begin{align*}
            \star\, E_0^{k}\cap\C_{p,k-p}=E_0^{n-k}\cap\C_{Q-p,n-k-Q+p}\,.
        \end{align*}
    \end{lemma}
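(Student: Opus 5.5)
The plan is to show that $\star$ intertwines $d_0$ and $\delta_0$ up to sign, so that it preserves $\ker\Box_0=\ker d_0\cap\ker\delta_0$. Lemma \ref{lemma 3.8} with $i=0$ gives $\delta_0=(-1)^{n(k+1)+1}\star d_0\star$ on $(\mathrm{Tot}\,\C)_k$. Combining this with $\star\star=\pm\mathrm{Id}$ from \eqref{properties of star}, we get, for $\alpha\in\C_{p,k-p}$,
\begin{align*}
    d_0\star\alpha=\pm\star\delta_0\alpha\ \text{ and }\ \delta_0\star\alpha=\pm\star d_0\alpha\,.
\end{align*}
Here we apply Lemma \ref{lemma 3.8} in degree $n-k+1$ and then use that $\star$ is invertible.

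The first step is to check the bidegree bookkeeping. By Definition \ref{Hodge star def}, $\star$ sends $\C_{p,k-p}$ to $\C_{Q-p,n-k-Q+p}$. We must confirm that $d_0\star\alpha$ and $\star\delta_0\alpha$ lie in the same bidegree. Indeed $\delta_0\alpha\in\C_{p,k-1-p}$, since $\delta_0$ preserves weight by Lemma \ref{lemma 3.8}, so $\star\delta_0\alpha\in\C_{Q-p,n-k+1-Q+p}$. This is exactly the bidegree of $d_0\star\alpha$. Consequently the identities above hold componentwise in each bidegree, and we need no further decomposition of $d$ into weights.

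Next, let $\alpha\in E_0^k\cap\C_{p,k-p}$, so that $d_0\alpha=0$ and $\delta_0\alpha=0$. The intertwining identities then give $d_0\star\alpha=0$ and $\delta_0\star\alpha=0$. Hence $\star\alpha\in\ker\Box_0\cap\C_{Q-p,n-k-Q+p}=E_0^{n-k}\cap\C_{Q-p,n-k-Q+p}$, which proves the inclusion $\subseteq$. For the reverse inclusion, we apply the same argument to $\star$ acting on $E_0^{n-k}\cap\C_{Q-p,n-k-Q+p}$, which lands in $E_0^k\cap\C_{p,k-p}$. Then $\star\star=\pm\mathrm{Id}$ shows that every element of the target is $\star$ of an element of the source.

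The main obstacle is justifying the sign formula for $\delta_0$ in this abstract setting. It was derived via Stokes' theorem for compactly supported forms, and it implicitly needs $\star$ to be an isometry with $\star\star=\pm1$. I would rely on Lemma \ref{lemma 3.8} as stated. If a purely algebraic argument were required, I would note that $d_0$ is $C^\infty$-linear, being algebraic, so the pointwise identity $\langle d_0\alpha,\beta\rangle\mathrm{vol}=\pm\alpha\wedge\star(\star d_0\star\beta)$ follows directly from the Leibniz rule for $d_0$ on $\alpha\wedge\star\beta$. No integration is needed, because $\alpha\wedge\star\beta$ is of top degree and weight $Q$, and $d_0$ vanishes on it for degree reasons.
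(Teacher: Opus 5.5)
Your proof is correct and follows the paper's argument. Both use Lemma \ref{lemma 3.8} with $i=0$ together with $\star\star=\pm\mathrm{Id}$ to obtain $d_0\star\alpha=\pm\star\delta_0\alpha$ and $\delta_0\star\alpha=\pm\star d_0\alpha$, and your treatment of the reverse inclusion via $\star\star=\pm\mathrm{Id}$ spells out what the paper leaves to ``linearity of $\star$''.

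One correction to your optional algebraic aside: $\alpha\wedge\star\beta$ has degree $n-1$, not top degree, so $d_0(\alpha\wedge\star\beta)=0$ does not follow from degree alone. On a Carnot group it holds because $\C_{Q,n-1-Q}=0$, since every $(n-1)$-form has weight $Q-w(\theta_j)<Q$. For an abstract multicomplex this would have to be assumed.
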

    \begin{proof}
        The claim readily follows from the equalities
        \begin{align*}
            d_0\star\alpha=&(-1)^{(k+1)(n-k-1)}\star\star d_0\star\alpha=\pm \star\delta_0\alpha=0\\
            \delta_0\star\alpha=&(-1)^{n(n-k+1)+1}\star d_0\star\star\alpha=\pm \star d_0\alpha=0\,
        \end{align*}
        and the linearity of the Hodge-$\star$ map.
        
    \end{proof}

\begin{definition}[The adjoint of $d_c$] Using the scalar product and all the properties just seen of the formal adjoints of the maps $d_i$, we can define
\begin{align*}
    \delta_c\colon E_0^k&\longrightarrow E_0^{k-1}\\ \langle d_c\overline{\alpha},\overline{\beta}\rangle_{k}=\langle\overline{\alpha},\delta_c\overline{\beta}\rangle_{k-1}\ &\text{ for any }\alpha\in E_0^{k-1}\,,\,\beta\in E_0^{k}\,.
\end{align*}
    
\end{definition}
\begin{remark}Just like when dealing with the formal adjoint of the differential $d$, an analogous formula to \eqref{formula of delta} holds for the adjoint of the Rumin differential $d_c$, that is
\begin{align*}
    \delta_c=(-1)^{n(k+1)+1}\star d_c\star\colon E_0^k\longrightarrow E_0^{k-1}\,.
\end{align*}
The claim follows again by a non-trivial application of Stokes' theorem together with the properties listed in \eqref{properties of star}, since
\begin{align}
    \int_Xd_c\overline{\alpha}\wedge\overline{\beta}+(-1)^{k-1}\int_X\overline{\alpha}\wedge d_c\overline{\beta}=0 \ \text{ for any compactly supported }\overline{\alpha}\in E_0^{k-1},\,\overline{\beta}\in E_0^k\,.
\end{align}
We stress that, unlike the one used to show \eqref{formula of delta}, this ``integration by parts'' formula is not straightforward and we refer the reader to Remark 2.16 in \cite{CompensatedCompactness} for a proof of this statement.
    
\end{remark}
\begin{lemma}\label{adjoint little dc}
Given the $s$-multicomplex $(\mathrm{Tot}\,\C,\wedge,d,\langle\cdot,\cdot\rangle,\mathrm{vol})$ with $d=d_0+d_1+\cdots+d_s$, once we define
\begin{align*}
\delta_c^i\colon E_0^k\longrightarrow E_0^{k-1}\ ,\  i=1,\ldots,S\\
\langle d_c^i\overline{\alpha},\overline{\beta}\rangle_k=\langle\overline{\alpha},\delta_c^i\overline{\beta}\rangle_{k-1}\ \text{ for any }\overline{\alpha}\in E_0^{k-1},\,\overline{\beta}\in E_0^k\,,
\end{align*}
we have that
\begin{align*}
    \delta_c^i\left(\ker\Box_0\cap\C_{p,k-p}\right)\subset\ker\Box_{0}\cap\C_{p-i,k-1-p+i}\ \text{ and }\delta_c^i=(-1)^{n(k+1)+1}\star\delta_c^i\star\,.
\end{align*}
\end{lemma}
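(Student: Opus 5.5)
The plan mirrors the proof of Lemma \ref{lemma 3.8}, with $E_0^\bullet$ in place of $\mathrm{Tot}\,\C$ and the Rumin pieces $d_c^i$ in place of the structure maps $d_i$. (The second identity in the statement is presumably a typo for $\delta_c^i=(-1)^{n(k+1)+1}\star d_c^i\star$; that is the version I will prove.)

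\emph{Step 1: weights.} Take $\overline{\beta}\in\ker\Box_0\cap\C_{p,k-p}$ and an arbitrary $\overline{\alpha}\in E_0^{k-1}$. Decompose $\overline{\alpha}=\sum_q\overline{\alpha}_q$ into weights; each component is again a Rumin form, because $\Pi_0$ preserves bidegree. Since $d_c^i$ has bidegree $(i,1-i)$ (Definition \ref{d_c^r}), the form $d_c^i\overline{\alpha}_q$ has weight $q+i$. Elements of different weight are orthogonal, so
\begin{align*}
\langle\overline{\alpha},\delta_c^i\overline{\beta}\rangle_{k-1}=\langle d_c^i\overline{\alpha},\overline{\beta}\rangle_k=\langle d_c^i\overline{\alpha}_{p-i},\overline{\beta}\rangle_k\,.
\end{align*}
Hence $\delta_c^i\overline{\beta}$ pairs trivially with every Rumin form whose weight is not $p-i$. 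Since $\delta_c^i\overline{\beta}\in E_0^{k-1}$ by definition, it follows that $\delta_c^i\overline{\beta}\in\ker\Box_0\cap\C_{p-i,k-1-p+i}$.

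\emph{Step 2: the Hodge-$\star$ formula.} Start from the identity $\delta_c=(-1)^{n(k+1)+1}\star d_c\star$ stated in the preceding remark. Insert the splittings $d_c=\sum_i d_c^i$ and $\delta_c=\sum_i\delta_c^i$; the second holds because adjunction is linear. This gives
\begin{align*}
\sum_i\delta_c^i=\sum_i(-1)^{n(k+1)+1}\star d_c^i\star\,.
\end{align*}
By Lemma \ref{Rumin forms closed under Hodge star}, $\star$ maps $E_0^k\cap\C_{p,k-p}$ onto $E_0^{n-k}\cap\C_{Q-p,n-k-Q+p}$. So $\star d_c^i\star$ applied to a weight-$p$ Rumin form has weight $Q-(Q-p+i)=p-i$. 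By Step 1, $\delta_c^i$ also lowers weight by exactly $i$. Different values of $i$ therefore land in mutually orthogonal, linearly independent weight spaces, and comparing the two sides weight by weight gives $\delta_c^i=(-1)^{n(k+1)+1}\star d_c^i\star$ for each $i$.

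\emph{Main obstacle.} The delicate point is the input identity $\delta_c=\pm\star d_c\star$, which rests on the integration-by-parts formula for $d_c$ quoted from \cite{CompensatedCompactness}. I take this as given from the remark. Two smaller checks are needed along the way. First, the signs coming from $\star\star=\pm1$ must be tracked consistently across degrees $k$ and $n-k$. Second, the weight decomposition must be compatible with $\Pi_0$, so that the components $\overline{\alpha}_q$ genuinely lie in $E_0$; this holds because $\Pi_0$ preserves bidegree.
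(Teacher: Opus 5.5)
Your proposal is correct and follows essentially the same route as the paper. Both use orthogonality of distinct weights to show that $\delta_c^i$ lowers the weight by exactly $i$, and both then split $\delta_c=(-1)^{n(k+1)+1}\star d_c\star$ into weight components and compare them. You are also right that the $\star\delta_c^i\star$ in the statement is a typo for $\star d_c^i\star$, which is what the paper's proof actually establishes.
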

\begin{proof}
    The proof follows exactly the same steps as the proof of Lemma \ref{lemma 3.8}, so that given $\overline{\alpha}\in\ker\Box_0\cap\C_{p,k-p}$ and any $\overline{\beta}\in E_0^{k-1}$, we have
    \begin{align*}
        \langle\delta_c^i\overline{\alpha},\overline{\beta}\rangle_{k-1}=\langle\overline{\alpha},d_c^i\overline{\beta}\rangle_{k}\neq 0\ \Longleftrightarrow\ \overline{\beta}\in\ker\Box_0\cap\C_{p-i,k-1-p+i}\,.
    \end{align*}
    For the second part, by linearity and using the linear independence of elements of different weight, we have
    \begin{align*}
        \sum_{i=1}^S\delta_c^i=\delta_c=(-1)^{n(k+1)+1}\star d_c\star=\sum_{i=1}^S(-1)^{n(k+1)+1}\star d_c^i\star
    \end{align*}
    and so
    \begin{align*}
        \delta_c^i=(-1)^{n(k+1)+1}\star d_c^i\star\ \text{ for each }i=1,\ldots,S\,.
    \end{align*}
    Notice that, when dealing with the Rumin differential and consequently its adjoint, the range $i=1,\ldots,S$ starts from 1 (and not 0) and $S=N-1$ is independent of $s$ (as pointed out in Lemma \ref{expression of d_c}).
\end{proof}

 \begin{remark}
    Before venturing into whether the collection of subcomplexes $\lbrace\left(E_{j,l}^{\bullet,\bullet},\Delta_j\right)\rbrace_{j\in I_{\bullet,\bullet}}$ is closed under Hodge-$\star$ duality, let us first study whether the submodules $B_l^{p,\bullet}$ are closed subspaces of $\C_{p,\bullet}$.  This question comes from the fact that the spaces $Z_l^{p,\bullet}$, defined as the kernel of bounded linear operators, are closed subspaces of $\C_{p,\bullet}$, however the range $B_l^{p,\bullet}$ in general is not. 
    Throughout this paper, in order to simplify the computations, we will assume that each $B_l^{p,\bullet}$ is closed. This is a strong assumption, since even in the application case that we have in mind of the de Rham complex on Carnot groups this will not be the case in general, unless $l=1,\infty$.
\end{remark}

\begin{proposition}[Closure under Hodge-$\star$]\label{Hodge star closed} The spectral complexes $\lbrace\left(E_{j,l}^{\bullet,\bullet},\Delta_j\right)\rbrace_{j\in I_{\bullet,\bullet}}$ introduced in Definition \ref{subc as subsp} are Hodge-$\star$ closed, in particular for any $r_1,\,r_2\ge 1$ we have
\begin{align*}
    \star\left[Z_{r_1}^{p,k-p}\cap\left(B_{r_2}^{p,k-p}\right)^\perp\right]=Z_{r_2}^{Q-p,n-k-Q+p}\cap\left(B_{r_1}^{Q-p,n-k-Q+p}\right)^\perp\,.
\end{align*}
\end{proposition}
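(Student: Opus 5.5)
We work entirely inside the space of Rumin forms, using the characterisation of Remark \ref{charact spectral subs in terms of Rumin}. There $Z_r^{p,\bullet}\cap\ker\Box_0$ is an intersection of kernels of the operators $D^{(j)}_p:=d_c^j\vert_p+\sum_{i=1}^{j-1}d_c^i\vert_{p+j-i}$ for $j\le r-1$. Likewise $B_r^{p,\bullet}$ (modulo $\mathrm{Im}\,d_0$) is a sum of images of the analogous operators starting at weight $p-r+1$.

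The guiding principle is the finite-dimensional identity $(\mathrm{Im}\,T)^\perp=\ker T^*$. It holds here because we assume each $B_l^{p,\bullet}$ is closed. This identity converts $(B_{r_2}^{p,\bullet})^\perp$ into a kernel condition for the adjoint operators $\delta_c^i$. Lemma \ref{adjoint little dc} then rewrites each such kernel condition as $\star$ applied to a kernel condition for $d_c^i$ in the dual weight.

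\textbf{Step 1.} We use Lemma \ref{Rumin forms closed under Hodge star} to reduce to Rumin forms. Since $\star$ is an isometry sending $\ker\Box_0\cap\C_{p,k-p}$ onto $\ker\Box_0\cap\C_{Q-p,n-k-Q+p}$, it suffices to compare both sides inside these harmonic spaces. Both sides are contained in them by the remarks following Definition \ref{subc as subsp}.

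\textbf{Step 2.} We show the duality
\[
\star\Bigl(\ker\Box_0\cap (B_{r_2}^{p,\bullet})^\perp\Bigr)=\ker\Box_0\cap Z_{r_2}^{Q-p,\bullet}.
\]
Take $\overline\beta\perp B_{r_2}^{p,\bullet}$. Pairing $\overline\beta$ with every element of $B_{r_2}$ as described in Proposition \ref{prop expression B_r} gives the following: the tuple $(\overline\beta,0,\dots)$ is orthogonal to the image of the block triangular operator built from the $d_c^i$ on weights $p-r_2+1,\dots,p-1$. Taking adjoints, this becomes the vanishing of the corresponding triangular system for the $\delta_c^i$, with free auxiliary Rumin forms playing the role of the $\overline\omega$. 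Conjugating by $\star$ via Lemma \ref{adjoint little dc} turns $\delta_c^i$ into $\pm d_c^i$, reverses weights $p\mapsto Q-p$, and yields exactly the system of Proposition \ref{prop expression Z_r} defining $Z_{r_2}^{Q-p,\bullet}$. Here the auxiliary forms $\overline\omega_{Q-p+i}=\star(\cdot)$ sit in weights $Q-p+i$.

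\textbf{Step 3.} We derive the symmetric statement $\star(\ker\Box_0\cap Z_{r_1}^{p,\bullet})=\ker\Box_0\cap(B_{r_1}^{Q-p,\bullet})^\perp$. This follows by applying Step 2 at the dual weight with $r=r_1$, taking orthogonal complements within $\ker\Box_0$ (closedness gives $B^{\perp\perp}=B$), and using $\star\star=\pm\mathrm{Id}$. Intersecting the results of Steps 2 and 3 gives the proposition, because $\star$ is a linear bijection and so commutes with intersections.

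\textbf{Main obstacle.} The hard part is Step 2. The set $B_r$ is not the image of a single operator but a sum of images of coupled triangular expressions, and the $Z_r$ condition is existential in the $\overline\omega$'s. To handle this, I will first encode both as one block operator. Define $T$ on $\bigoplus_{i=1}^{r-1}\ker\Box_0\cap\C_{p-r+i,\bullet}$ with lower-triangular blocks $d_c^{i-j}$. Then $B_r$ is $\mathrm{Im}\,d_0$ plus the last component of $\mathrm{Im}\,T$ restricted to the kernel of the first $r-2$ rows. Similarly, $Z_r$ is the projection onto the first factor of $\ker T'$ for the analogous operator $T'$.

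With this encoding, the statement reduces to $(\text{projection of }\ker)^\perp$ versus image of the adjoint, which is finite-dimensional linear algebra. I expect to need $d_c^2=0$ (equivalently, the relations $\sum_{i+j=m}d_c^id_c^j=0$) to identify $T'$ with $\star T^*\star$. Checking the signs and index bookkeeping there is where errors are most likely.
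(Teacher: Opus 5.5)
Your proposal is correct, and its skeleton is the paper's: reduce to Rumin forms, translate the $Z$/$B$ conditions through the adjoints $\delta_c^i$ and the relation $\delta_c^i=\pm\star d_c^i\star$ of Lemma \ref{adjoint little dc}, use kernel--image orthogonality, obtain the dual identity, and intersect. The real difference is in the key step.

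The paper flattens Remark \ref{charact spectral subs in terms of Rumin}. It writes $\star Z_r^{p,k-p}$ as an intersection of kernels of sums $\delta_c^j\vert_{Q-p}+\sum_i\delta_c^i\vert_{Q-p-j+i}$. It then replaces images of sums by sums of images, and matches both sides with $\sum_i\sum_k\mathrm{Im}\,d_c^i\vert_k$. This is short, but it discards the triangular constraints that tie $\overline{\alpha}$ (resp.\ $\overline{c}$) to the auxiliary forms $\overline{\omega}$. So it describes $Z_r$ and $B_r$ faithfully only when those systems decouple, as in the Engel example. Your block-triangular encoding keeps the existential quantifiers. The duality then becomes the identity $\bigl(\pi_{\mathrm{last}}\,T(\ker\pi_UT)\bigr)^\perp=\pi_{\mathrm{last}}\ker T^*$ within $\ker\Box_0\cap\C_{p,k-p}$, where $U$ denotes the first $r-2$ target factors. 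This costs more bookkeeping but is valid in general.

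Three small corrections.

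\emph{Step 2.} As written, ``$(\overline{\beta},0,\dots)\perp\mathrm{Im}\,T$'' is too strong: it would force all auxiliary forms to vanish. What you actually have is $\overline{\beta}\perp\pi_{\mathrm{last}}T(\ker\pi_UT)$. The free auxiliary forms arise exactly from $(\ker\pi_UT)^\perp=\mathrm{Im}\,(\pi_UT)^*$. That identity is where a closed-range assumption enters, not $(\mathrm{Im}\,T)^\perp=\ker T^*$, which needs no hypothesis.

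\emph{The relation $d_c^2=0$.} You will not need it. All auxiliary forms sit in the same degree, so the sign in $\star\delta_c^i=\pm d_c^i\star$ is uniform and can be absorbed into them. Hence $\star T^*\star$ matches $T'$ by the index reversal $p-m\mapsto Q-p+m$ alone.

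\emph{Step 3.} This is just Step 2 at weight $Q-p$ followed by one more application of $\star$. No orthogonal complements or $B^{\perp\perp}=B$ are needed.
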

\begin{proof}
    The case when $r_1=r_2=1$ was already covered in Lemma \ref{Rumin forms closed under Hodge star}, so we can directly consider $r_1,r_2\ge 2$. 
    We can also simplify our considerations by taking the characterisation of the submodules $Z_{r_1}^{\bullet,\bullet}$ and $B_{r_2}^{\bullet,\bullet}$ in terms of Rumin forms and the differentials $d_c^j$ (see Remark \ref{charact spectral subs in terms of Rumin}).   Moreover, from the properties listed in Lemma \ref{adjoint little dc}, we have
    \begin{align*}
    \overline{\alpha}\in \ker d_c^i\big\vert_p \longleftrightarrow \star\overline{\alpha}\in\ker\delta_c^i\big\vert_{Q-p}\ ,\ \left(d_c^i\big\vert_p\right)^\ast=\delta_c^i\big\vert_{p+i}\ \text{ and }\left(\delta_c^i\big\vert_p\right)^\ast=d_c^i\big\vert_{p-i}\,,
    \end{align*}
    where the $\ast$ here denotes the formal adjoint (see also Definition \ref{adjoint little dc}). 
    Therefore, it is a straightforward to check
    \begin{align*}
        \star Z_{r}^{p,k-p}=&\bigcap_{j=1}^{r-1}\ker\left(\delta_c^j\big\vert_{Q-p}+\sum_{i=1}^{j-1}\delta_c^i\big\vert_{Q-p-j+i}\right)=\left[\sum_{j=1}^{r-1}\mathrm{Im}\,\left(d_c^j\big\vert_{Q-p-j}+\sum_{i=1}^{j-1}d_c^i\big\vert_{Q-p-j}\right)\right]^\perp\\
        =&\left[\sum_{j=1}^{r-1}\sum_{i=1}^j\mathrm{Im}\,d_c^j\big\vert_{Q-p-j}\right]^\perp=\left(\sum_{i=1}^{r-1}\sum_{k=Q-p-(r-1)}^{Q-p-i}\mathrm{Im}\,d_c^i\big\vert_{k}\right)^\perp\,.
    \end{align*}
    If we consider $B_{r}^{Q-p,n-k-Q+p}$, we get
    \begin{align*}
        B_r^{Q-p,n-k-Q+p}=&\sum_{j=1}^{r-1}\mathrm{Im}\,\left(d_c^j\big\vert_{Q-p-r+1}+\sum_{i=1}^{j-1}d_c^i\big\vert_{Q-p-r+1+j-i}\right)\\=&\sum_{j=1}^{r-1}\mathrm{Im}\,d_c^j\big\vert_{Q-p-r+1}+\sum_{i=1}^{r-1}\sum_{j=i+1}^{r-1}\mathrm{Im}\,d_c^i\big\vert_{Q-p-r+1+j-i}\\=&\sum_{i=1}^{r-1}\mathrm{Im}\,d_c^i\big\vert_{Q-p-r+1}+\sum_{i=1}^{r-1}\sum_{k=Q-p-r+2}^{Q-p-i}\mathrm{Im}\,d_c^i\big\vert_{k}=\sum_{i=1}^{r-1}\sum_{k=Q-p-r+1}^{Q-p-i}\mathrm{Im}\,d_c^i\big\vert_k
    \end{align*}
    which implies that
    \begin{align*}
        \star Z_r^{p,k-p}=\left(B_r^{Q-p,n-k-Q+p}\right)^\perp\,.
    \end{align*}
    By reasoning in exactly the same way, we also get that
    \begin{align*}
        \left(B_r^{p,k-p}\right)^\perp=\star Z_r^{Q-p,n-k-Q+p}
    \end{align*}
    and hence the claim follows.
\end{proof}
\begin{remark}
    Notice that we have shown that the collection of all complexes $\lbrace\left(E_{j,l}^{\bullet,\bullet},\Delta_j\right)\rbrace_{j\in I_{\bullet,\bullet}}$ is Hodge-$\star$ closed, but it this does not mean that each complex will be. More explicitly, given 
    \begin{align*}
        \alpha\in E_{r_1,r_2}^{p,k-p} \text{ then }\star\alpha\in E_{r_2,r_1}^{Q-p,n-k-Q+p}
    \end{align*}
    however the two subspaces $ E_{r_1,r_2}^{p,k-p}$ and $E_{r_2,r_1}^{Q-p,n-k-Q+p}$ may not belong to the same complex, i.e. there may not be joined by a string of non trivial maps $\Delta_j$. This becomes evident in the following explicit example.
\end{remark}
\subsection{An explicit example: the Engel group}\label{engel group}
Let us consider the filiform 4-dimensional nilpotent Lie group $\mathbb G$, usually referred to as the Engel group. This is the connected, simply-connected nilpotent Lie group whose Lie algebra $\mathfrak{g}$ has the following non trivial brackets
\begin{align*}
    [X_1,X_2]=X_3\ ,\ [X_1,X_3]=X_4\,.
\end{align*}
Since its Lie algebra admits a stratification \begin{align*}
    \mathfrak{g}=V_1\oplus V_2\oplus V_3=\mathrm{span}_{\mathbb R}\lbrace X_1,X_2\rbrace\oplus\mathrm{span}_{\mathbb R}\lbrace X_3\rbrace\oplus\mathrm{span}_{\mathbb R}\lbrace X_4\rbrace
\end{align*} this group can be endowed with a Carnot group structure, and so the de Rham complex is a multicomplex:
\begin{align*}
    \left(\Omega^\bullet(\mathbb G),d=d_0+d_1+d_2+d_3\right)\,.
\end{align*}
Explicit step-by-step computations of the Rumin complex $(E_0^\bullet,d_c)$ on the Engel group can be found in the literature \cite{rumin_grenoble,GagliardoNiremb,F+T1}. Here, we will only mention the properties of forms and of the Rumin complex that are strictly necessary to the construction of the spectral complexes $\lbrace (E_{j,l}^{\bullet,\bullet},\Delta_j)\rbrace_{j\in I_{\bullet,\bullet}}$.

Once we introduce a scalar product adapted to the stratification, we can assume $\lbrace X_1,X_2,X_3,X_4\rbrace$ to be an orthonormal basis of $\mathfrak{g}$ and construct its dual basis $\lbrace \theta_1,\theta_2,\theta_3,\theta_4\rbrace$ such that $\theta_i(X_j)=\delta_{ij}$. On Carnot groups, the concept of weights of forms is linked to the stratification, so that
\begin{align*}
    w(\theta_1)=w(\theta_2)=1\, ,\ w(\theta_3)=2&\,,\text{ and }w(\theta_4)=3\\
    w(\theta_1\wedge\theta_2)=2\,,\ w(\theta_1\wedge\theta_3)=w(\theta_2\wedge\theta_3)=3\,,\ &w(\theta_1\wedge\theta_4)=w(\theta_2\wedge\theta_4)=4\,,\ w(\theta_3\wedge\theta_4)=5 \\
    w(\theta_1\wedge\theta_2\wedge\theta_3)=4\,,\ w(\theta_1\wedge\theta_2\wedge\theta_4)&=5\,,\ w(\theta_1\wedge\theta_3\wedge\theta_4)=w(\theta_2\wedge\theta_3\wedge\theta_4)=6\\
    w(\theta_1\wedge&\theta_2\wedge\theta_3\wedge\theta_4)=7\,.
\end{align*}
Furthermore, after explicit computations, one can find
\begin{itemize}
    \item $E_0^0=C^\infty(\mathbb G)=\C_{0,0}$, here smooth functions are taken to be 0-forms of weight 0;
    \item $E_0^1=\mathrm{span}_{C^\infty(\mathbb G)}\lbrace\theta_1,\theta_2\rbrace=\C_{1,0}$;
    \item $E_0^2=\mathrm{span}_{C^\infty(\mathbb G)}\lbrace \theta_2\wedge\theta_3,\theta_1\wedge\theta_4\rbrace\subset\C_{3,-1}\oplus\C_{4,-2}$;
    \item $E_0^3=\mathrm{span}_{C^\infty(\mathbb G)}\lbrace \theta_1\wedge\theta_3\wedge\theta_4,\theta_2\wedge\theta_3\wedge\theta_4\rbrace=\C_{6,-3}$;
    \item $E_0^4=\mathrm{span}_{C^\infty(\mathbb G)}\lbrace \theta_1\wedge\theta_2\wedge\theta_3\wedge\theta_4\rbrace=\C_{7,-3}$;
\end{itemize}
which implies that
\begin{itemize}
    \item $d_c\colon E_0^0\longrightarrow E_0^1$ is given by $d_c=d_c^1$;
    \item $d_c\colon E_0^1\longrightarrow E_0^2$ is given by $d_c=d_c^2+d_c^3$;
    \item $d_c\colon E_0^2\longrightarrow E_0^3$ is given by $d_c=d_c^2+d_c^3$. However, when considering forms of different weight separately, the Rumin differential is of only one bidegree:\\ $d_c\colon E_0^2\cap\C_{3,-1}\longrightarrow E_0^3$ is given by $d_c=d_c^3$, and \\$d_c\colon E_0^2\cap\C_{4,-2}\longrightarrow E_0^3$ is given by $d_c=d_c^2$;
    \item $d_c\colon E_0^3\longrightarrow E_0^4$ is given by $d_c=d_c^1$.
\end{itemize}
In other words, there is only one choice of bidegree $(p,k)$ for which $I_{p,k}$ contains more than 1 element, namely $(1,1)$ with $I_{1,1}=\lbrace 2,3\rbrace$. Consequently, the corresponding collection $\lbrace(E_{j,l}^{\bullet,\bullet},\Delta_j)\rbrace_{j\in I_{\bullet,\bullet}}$ will feature only two complexes. This can be visualised using a diagram.

Just a quick warning for people who are used to dealing with spectral sequences: we are using a pictorial convention often employed when dealing with differential forms in Carnot groups. This means that the spaces $\C_{p,k-p}$ are not placed at the point $(p,k-p)$ of the Cartesian plane. Instead, the weight $p$ is encoded along
the vertical axis, and the degree $p+k-p=k$ along the horizontal axis, so that each space $\C_{p,k-p}$ will be placed at the point $(Q-p,k)$ of the Cartesian plane, where $Q$ is the weight of the volume form. The naive idea is that, when dealing with the de Rham complex $(\Omega^\bullet(\mathbb G),d)$ on Carnot groups, we start from the top left corner with 0-forms of weight 0. Moving to the right, the degree of the forms increases, whereas going downwards the weight of the forms increases.

Using this convention, the diagram below is depicting the Rumin complex on the Engel group, where only the non trivial spaces $E_0^\bullet$ are included.
Moreover, the thick lines indicate the operators $d_c^j$ which are non-zero when acting on the spaces of Rumin forms. 

\begin{align*}
    \begin{picture}(450,280)(0,0)
   \put(  0,  0){\line( 1, 0){450}}
   \put(  0,  0){\line( 0, 1){280}}
   \multiput(  0, 35)( 0,35){8}{\line( 1, 0){450}}
   \multiput( 90,  0)(90, 0){5}{\line( 0, 1){280}}
   \put(45,262){\makebox(0,0){$ \C_{0,0}$}}
   \put(135,227){\makebox(0,0){$ \C_{1,0}$}}
   \put(225,157){\makebox(0,0){$\ker\Box_0\cap\C_{3,-1}$}}
   \put(225,122){\makebox(0,0){{ $\ker\Box_0\cap\C_{4,-2}$}}}
   \put(315,52){\makebox(0,0){{ $\C_{6,-3}$}}}
   \put(405,17){\makebox(0,0){{ $\C_{7,-3}$}}}
   \put(60,262){\vector(1,-0.5){60}}
   \put(150,227){\vector(1,-0.9){60}}
   \put(140,220){\vector(1,-1.65){55}}
   \put(262,155){\vector(1,-1.95){48}}
   \put(240,113){\vector(1,-0.9){60}}
   \put(334,50){\vector(1,-0.5){55}}
  \end{picture} 
\end{align*}
As already mentioned, we focus on bidegree $(1,1)$ to see that $I_{1,1}=\lbrace 2,3\rbrace$, so that we are going to construct the following two subcomplexes
\begin{align*}
    Z_1^{0,0}\xrightarrow[]{\Delta_1}Z_2^{1,0}\cap\left(B_1^{1,0}\right)^\perp\xrightarrow[]{\Delta_2}Z_3^{3,-1}\cap\left(B_2^{3,-1}\right)^\perp\xrightarrow[]{\Delta_3}Z_1^{6,-3}\cap\left(B_3^{6,-3}\right)^\perp\xrightarrow[]{\Delta_1} \C_{7,-3}\cap\left(B_1^{7,-3}\right)^\perp\\
    Z_1^{0,0}\xrightarrow[]{\Delta_1}Z_3^{1,0}\cap\left(B_1^{1,0}\right)^\perp\xrightarrow[]{\Delta_3}Z_2^{4,-2}\cap\left(B_3^{4,-2}\right)^\perp\xrightarrow[]{\Delta_2} Z_1^{6,-3}\cap\left(B_2^{6,-3}\right)^\perp\xrightarrow[]{\Delta_1}\C_{7,-3}\cap\left(B_1^{7,-3}\right)^\perp
\end{align*}
In this particular case, the expression of the spaces $E_{j,l}^{\bullet,\bullet}$ simplifies greatly:
\begin{itemize}
    \item $Z_1^{0,0}=\ker\Box_0\cap\C_{0,0}=E_0^0$;
    \item $Z_2^{1,0}=Z_1^{1,0}$ and so $E_{2,1}^{1,0}=Z_1^{1,0}\cap\left(B_1^{1,0}\right)^\perp=\ker\Box_0\cap\C_{1,0}=E_0^1$;
    \item $Z_3^{1,0}=\lbrace \alpha\in \C_{1,0}\mid d_c^2\overline{\alpha}=0\rbrace$ and so $E_{3,1}^{1,0}=\lbrace \overline{\alpha}\in E_0^1\mid d_c^2\overline{\alpha}=0\rbrace$;
    \item $B_2^{3,-1}=B_1^{3,-1}$ and $Z_3^{3,-1}=Z_1^{3,-1}$ and so $E_{3,2}^{3,-1}=\ker\Box_0\cap\C_{3,-1}=E_0^2\cap \C_{3,-1}$;
    \item $B_3^{4,-2}=B_1^{4,-2}$ and $Z_2^{4,-2}=Z_1^{4,-2}$ and so $E_{2,3}^{4,-2}=\ker\Box_0\cap\C_{4,-2}=E_0^2\cap\C_{4,-2}$;
    \item $B_3^{6,-3}=\lbrace \overline{\alpha}\in\ker\Box_0\cap\C_{6,-3}\mid \overline{\alpha}=d_c^2\overline{c}_{4}\text{ for some }\overline{c}_4\in\ker\Box_0\cap\C_{4,-2}\rbrace$ and so $E_{1,3}^{6,-3}=\ker\Box_0\cap\left(\mathrm{Im}\,d_c^2\big\vert_4\right)^{\perp}=E_0^3\cap\left(\mathrm{Im}\,d_c^2\big\vert_4\right)^\perp$;
    \item $B_2^{6,-3}=B_1^{6,-3}$ and so $E_{1,2}^{6,-3}=\ker\Box_0\cap \C_{6,-3}=E_0^3$;
    \item $\C_{7,-3}\cap\left(B_1^{7,-3}\right)^\perp=\ker\Box_0\cap\C_{7,-3}=E_0^4$.
\end{itemize}
Moreover, each one of the differentials $\Delta_j$ coincides with the corresponding operators $d_c^j$, and so we are left with the following two complexes
\begin{align*}
    E_0^0\xrightarrow[]{d_c^1}&\ \quad \ \ E_0^1\ \ \ \ \xrightarrow[]{d_c^2}E_0^2\cap\C_{3,-1}\xrightarrow[]{d_c^3}E_0^3\cap\left(\mathrm{Im}\,d_c^2\right)^\perp\xrightarrow[]{d_c^1} E_0^4\\
    E_0^0\xrightarrow[]{d_c^1}&E_0^1\cap\ker d_c^2\xrightarrow[]{d_c^3} E_0^2\cap\C_{4,-2}\xrightarrow[]{d_c^2} \qquad\ E_0^3\qquad\ \xrightarrow[]{d_c^1} E_0^4
\end{align*}
Let us study the cohomology spaces that these complexes compute. For the first one, we have
\begin{itemize}
    \item $\ker d_c^1\colon E_0^0\to E_0^1=Z_2^{0,0}=Z_\infty^{0,0}$;
    \item $\ker d_c^2\colon E_0^1\to E_0^2\cap\C_{3,-1}=Z_3^{1,0} \subsetneq Z_4^{1,0}   $ and $\mathrm{Im}\,d_c^1\colon E_0^0\to E_0^1=B_2^{1,0}=B_\infty^{1,0}$;
    \item $\ker d_c^3\colon E_0^2\cap\C_{3,-1}\to E_0^3\cap\left(\mathrm{Im}\,d_c^2\right)^\perp=Z_4^{3,-1}=Z_\infty^{3,-1}$ and $\mathrm{Im}\,d_c^2\colon E_0^1\to E_0^2\cap\C_{3,-1}=B_3^{3,-1}=B_\infty^{3,-1}$;
    \item $\ker d_c^1\colon E_0^3\cap\left(\mathrm{Im}\,d_c^2\right)^\perp\to E_0^4=Z_2^{6,-3}=Z_\infty^{6,-3}$ and $\mathrm{Im}
    \,d_c^3\colon E_0^3\cap\C_{3,-1}\to E_0^3\cap\left(\mathrm{Im}\,d_c^2\right)^\perp=B_4^{6,-3}=B_\infty^{6,-3}$;
    \item $\mathrm{Im}\,d_c^1\colon E_0^3\cap\left(\mathrm{Im}\,d_c^2\right)^\perp\to E_0^4=B_2^{7,-3}=B_\infty^{7,-3}$
\end{itemize}
while for the second
\begin{itemize}
    \item  $\ker d_c^1\colon E_0^0\to E_0^1\cap\ker d_c^2=Z_2^{0,0}=Z_\infty^{0,0}$;
    \item $\ker d_c^3\colon E_0^1\cap\ker d_c^2\to E_0^2\cap\C_{4,-2}=Z_4^{1,0} = Z_\infty^{1,0}   $ and $\mathrm{Im}\,d_c^1\colon E_0^0\to E_0^1\cap\ker d_c^2=B_2^{1,0}=B_\infty^{1,0}$;
    \item $\ker d_c^2\colon E_0^2\cap\C_{4,-2}\to E_0^3=Z_3^{3,-1}=Z_\infty^{3,-1}$ and $\mathrm{Im}\,d_c^3\colon E_0^1\cap\ker d_c^2\to E_0^2\cap\C_{4,-2}=B_3^{4,-2}=B_\infty^{4,-2}$;
    \item $\ker d_c^1\colon E_0^3\to E_0^4=Z_2^{6,-3}=Z_\infty^{6,-3}$ and $\mathrm{Im}
    \,d_c^2\colon E_0^3\cap\C_{3,-1}\to E_0^3=B_3^{6,-3}\subsetneq B_4^{6,-3}$;
    \item $\mathrm{Im}\,d_c^1\colon E_0^3\to E_0^4=B_2^{7,-3}=B_\infty^{7,-3}$.
\end{itemize}
Therefore, the cohomology of the original multicomplex (the de Rham cohomology in this case) is carried:
\begin{itemize}
    \item in degree 0, 2 and 4 by both subcomplexes;
    \item in degree 1 by the second one;
    \item in degree 3 by the first one.
\end{itemize}
This follows directly from the theory of spectral sequences: indeed in degree 0, 2, and 4 the whole cohomology is the direct sum of the cohomology groups of each subcomplex. In degree 0 and 4, we are simply taking the sum of the same group, whereas in degree 2 the sum is not trivial.
\bigskip
		\bibliographystyle{amsplain}

\bibliography{bibliography}
\end{document}